\theoremstyle{plain}
\newtheorem{theorem}{Theorem}
\newtheorem{lemma}[theorem]{Lemma}
\newtheorem{corollary}[theorem]{Corollary}
\theoremstyle{definition}
\newtheorem{claim}{Claim}
\theoremstyle{remark}
\newtheorem{remark}[theorem]{Remark}
\newtheorem{question}[theorem]{Question}
\numberwithin{equation}{section}
\numberwithin{theorem}{section}
\numberwithin{conjecture}{section}
\newcommand{\br}{\overline}
\newcommand{\R}{\mathbb R}
\newcommand{\T}{\mathbb T}
\newcommand{\C}{\mathbb C}
\newcommand{\Z}{\mathbb Z}
\newcommand{\N}{\mathbb N}
\DeclareMathOperator{\dist}{{\mathrm{dist}}}
\DeclareMathOperator{\diam}{{\mathrm{diam}}}
\DeclareMathOperator{\inter}{{\mathrm{int}}}
\DeclareMathOperator{\re}{{\mathrm{Re}}}
\DeclareMathOperator{\im}{{\mathrm{Im}}}
\DeclareMathOperator{\length}{{\mathrm{length}}}
\DeclareMathOperator{\Area}{{\mathrm{Area}}}
\DeclareMathOperator{\area}{\mathrm{Area}}
\DeclareMathOperator{\dimh}{\dim_{\mathscr H} }
\DeclareMathOperator{\proj}{\mathrm{proj}}
\begin{document}
\title[Dimension of packings by smooth curves]{On the Hausdorff dimension of the residual set of a packing by smooth curves}

\author{Steven Maio}
\author{Dimitrios Ntalampekos}
\address{Institute for Mathematical Sciences, Stony Brook University, Stony Brook, NY 11794, USA.}

\thanks{The second author is partially supported by NSF Grant DMS-2000096.}
\email{steven.maio@stonybrook.edu} 
\email[Corresponding author]{dimitrios.ntalampekos@stonybrook.edu}

\date{\today}
\keywords{Packing, circle packing, residual set, Hausdorff dimension, convex curve, chord-arc, Ahlfors regular}
\subjclass[2010]{Primary 52C15, 28A78; Secondary 52A10, 28A80.}

\begin{abstract}
Let a planar residual set be a set obtained by removing countably many disjoint topological disks from an open set in the plane. We prove that the residual set of a planar packing by curves that satisfy a certain lower curvature bound has Hausdorff dimension bounded away from $1$, quantitatively, depending only on the curvature bound. As a corollary, the residual set of any circle packing has Hausdorff dimension uniformly bounded away from $1$. This result generalizes the result of Larman, who obtained the same conclusion for circle packings inside a square. We also show that our theorem is optimal and does not hold in general without lower curvature bounds. In particular, we construct packings by strictly convex, smooth curves whose residual sets have dimension $1$. On the other hand, we prove that any packing by strictly convex curves cannot have $\sigma$-finite Hausdorff $1$-measure.
\end{abstract}

\maketitle

\section{Introduction}

Let $\Omega\subset \R^2$ be an open set. A collection $\mathcal P_{\Omega}=\{D_i\}_{i\in \N}$ of open Jordan regions in $\R^2$ is called a \textit{packing inside $\Omega$} if $D_i\subset \Omega$ for all $i\in \N$ and $D_i\cap D_j=\emptyset$ for all $i,j\in \N$ with $i\neq j$. Note that the closures of two Jordan regions $D_i,D_j$, $i,j\in \N$, might intersect. The \textit{residual set of a packing $\mathcal P_{\Omega}= \{D_i\}_{i\in \N}$} is defined to be the locally compact set
\begin{align*}
\mathcal S= \Omega\setminus \bigcup_{i\in \N}D_i.
\end{align*}
We refer to $\mathcal P_{\Omega}$ as a \textit{packing by the regions $D_i$} or equivalently \textit{by the curves} $\partial D_i$. Moreover, we refer to packings by disks as {circle} packings or disk packings.

We first give some background on the estimation of the Hausdorff dimension of the residual sets of various types of packings. Eggleston in \cite{Eggleston:gasket} studied the Hausdorff dimension of packings inside an equilateral triangle $\Omega$ by equilateral oppositely oriented triangles. He proved that the minimal Hausdorff dimension is $\log_2 3$, which is attained by the Sierpi\'nski gasket; see Figure \ref{figure:gasket}. Then Hirst in \cite{Hirst:packing} proved that the Hausdorff dimension of the Apollonian gasket (Figure \ref{figure:apollonian}) is strictly between $1$ and $2$. Later, using the techniqies of Eggleston, Larman \cite{Larman:DimensionPackings} proved that any disk packing inside a square has Hausdorff dimension bounded below by $1.03$. Furthermore, in \cite{Larman:spheres} he used the $2$-dimensional result to prove inductively that a packing inside an $n$-dimensional cube by $n$-balls has always dimension strictly larger than $n-1$.  

A closely related topic is the study of the \textit{exponent} of disk packings, which is another relevant notion of dimension that is at least as large as the Hausdorff dimension by a result of Larman \cite{Larman:packing}. We direct the reader to \cite{Melzak:packing,Wilker:packing2, Foster:packing,Boyd:packing,Wilker:packing} for a sequence of results in the subject. Many of these results aim at estimating the Hausdorff dimension of the Apollonian gasket. Boyd \cite{Boyd:apollonian_bound} proved that the the dimension of the Apollonian gasket is between $1.300$ and $1.315$ and more recently Thomas--Dhar \cite{ThomasDhar:gasket}, McMullen \cite{McMullen:gasket}, and Bai--Finch \cite{BaiFinch:gasket} gave an accurate numerical estimate of $1.30568$. It still remains an open problem, posed by Melzak \cite{Melzak:packing}, whether the Apollonian gasket attains the minimum Hausdorff dimension among all disk packings.

\begin{figure}
\centering
\begin{minipage}{.52\textwidth}
\centering
\captionsetup{width=.8\linewidth}
\includegraphics[width=1.\linewidth]{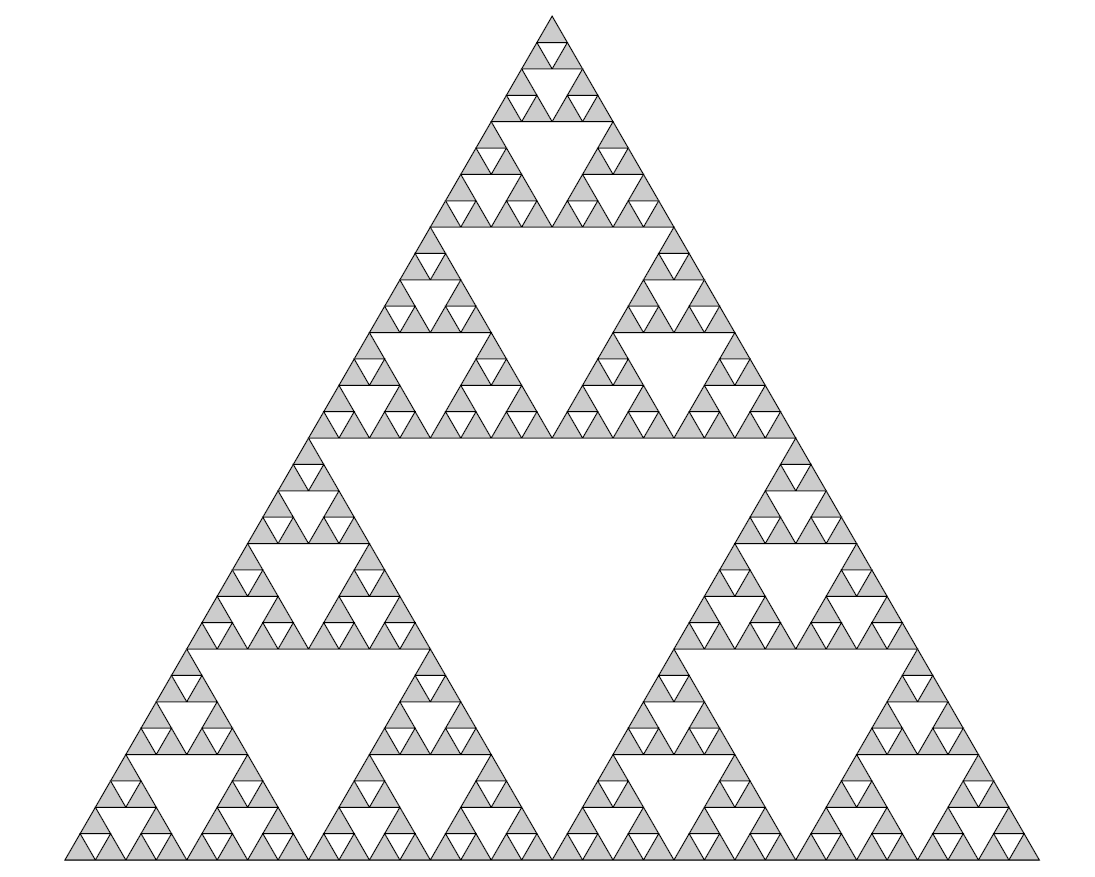}
\captionof{figure}{The Sierpi\'nski gasket.}\label{figure:gasket}
\end{minipage}
\begin{minipage}{.47\textwidth}
\centering
\captionsetup{width=.8\linewidth}
\includegraphics[width=1.\linewidth]{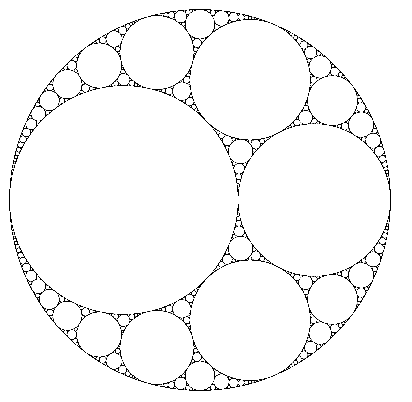}
\captionof{figure}{The Apollonian gasket.}\label{figure:apollonian}
\end{minipage}
\end{figure}

In this work we study lower bounds for the Hausdorff dimension of the residual sets of packings by arbitrary smooth, convex curves, rather than circles. Our main result is the following theorem.

\begin{theorem}\label{theorem:main}
For each $k>0$, there exists a constant $K=K(k)>1$ such that the following holds. Let $\Omega\subset \R^2$ be an open set and let $\mathcal P_{\Omega}=\{D_i\}_{i\in \N}$ be a packing such that for each $i\in \N$ the curve $\partial D_i$ is smooth and has curvature bounded below by $k/\length(\partial D_i)$. Then, the residual set $\mathcal S$ of $\mathcal P_{\Omega}$ satisfies
\begin{align*}
\dim_{\mathscr H} (\mathcal S\cap U) >K>1
\end{align*}
for every open set $U$ with $\mathcal S \cap U \neq \emptyset$. In particular, 
\begin{align*}
\dim_{\mathscr H} (\mathcal S) >K>1.
\end{align*}
\end{theorem}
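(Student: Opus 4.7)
My plan is to adapt Larman's argument for disk packings, the adaptation going through a quasi-roundness reduction afforded by the curvature hypothesis.

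\textbf{Step 1 (Quasi-roundness).} The first step is to show that there exists $c = c(k) > 0$ so that every $D_i$ is a $c$-quasi-disk: there is $y_i \in D_i$ with $B(y_i, c\diam D_i) \subset D_i \subset B(y_i, \diam D_i)$, and $\partial D_i$ is $C(k)$-chord-arc. The upper containment is immediate. For the lower containment, the task reduces to bounding the transverse width $w$ of $D_i$ below by $c(k) \diam D_i$. I would parametrize the arc of $\partial D_i$ joining the two endpoints $p,q$ of the diameter by the tangent angle $\alpha$, then express its $x$-span and its $y$-sagitta as integrals of $\cos\alpha/\kappa(\alpha)$ and $\sin\alpha/\kappa(\alpha)$ respectively over $\alpha \in [-\pi/2, \pi/2]$. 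Using the hypothesis $\kappa \geq k/\length(\partial D_i)$ together with the classical convex-curve inequality $\length(\partial D_i) \leq \pi\diam D_i$, the ratio of these two integrals is bounded below by a constant depending only on $k$, with the extremal case being an almond-shaped curve where all curvature is concentrated near the endpoints $p, q$.

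\textbf{Step 2 (Self-similar decomposition and dimension bound).} With quasi-roundness in hand, I would prove a scale-invariant local decomposition: there exist $N = N(k) \geq 2$ and $\delta = \delta(k) \in (0, 1)$ with $N\delta^s = 1$ for some $s = s(k) > 1$ such that, for every ball $B = B(x, r)$ centered at $x \in \mathcal S$ with $r$ sufficiently small, there are at least $N$ disjoint sub-balls $B(y_j, \delta r) \subset B$ with $y_j \in \mathcal S$. The proof is a case analysis on the packing elements meeting $B$: if a dominant $D_{i_0}$ has $\diam D_{i_0}$ comparable to $r$, its quasi-roundness and the $C(k)$-chord-arc property of $\partial D_{i_0}$ provide $N$ well-separated points on $\partial D_{i_0}\cap B \subset \mathcal S$; otherwise, many elements of diameter $\lesssim r$ populate $B$, and the residual interstices between them (counted via an area packing argument using quasi-roundness) furnish the sub-balls. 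Iterating this decomposition across all scales produces a Cantor-like tree inside $\mathcal S$, and the Moran--Hutchinson mass-distribution principle yields a measure $\mu$ on $\mathcal S$ with $\mu(B(y,\rho)) \leq C\rho^s$ for all small $\rho$, whence $\dim_{\mathscr H}(\mathcal S \cap U) \geq s > 1$ for every open $U$ meeting $\mathcal S$, as required.

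\textbf{Main obstacle.} The crux is verifying the strict inequality $N\delta > 1$ (equivalently $s > 1$) quantitatively in Step 2, which is what upgrades an ordinary dimension bound of $1$ to the strict bound $K > 1$. In Larman's disk case this follows from explicit Euclidean area arithmetic; for general quasi-disks, one must carefully propagate the constants $c(k), C(k)$ from Step 1 through a packing estimate in an annulus, and in the ``dominant element'' branch ensure that a single elongated element cannot wrap around $B$ and obstruct all candidate sub-balls. The expected degeneration $K(k) \to 1$ as $k \to 0$ is consistent with the sharpness noted in the abstract: without a uniform curvature lower bound one can construct packings by smooth strictly convex curves whose residual sets have dimension exactly $1$.
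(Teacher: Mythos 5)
Your Step 1 is sound and corresponds to the paper's preliminaries: Theorem \ref{theorem:chordarc} and Lemma \ref{lemma:chordarc_area} establish exactly the chord-arc and Ahlfors $2$-regularity (quasi-roundness) properties with constants depending only on $k$. The gap is in Step 2: the uniform single-scale decomposition you posit is false. Take two regions $D_1,D_2$ of the packing whose closures touch at a point $x$; tangencies are unavoidable (the theorem must cover, e.g., the Apollonian gasket). By convexity there is a common supporting line $L$ at $x$ with $D_1,D_2$ on opposite sides, and smoothness forces $\partial D_1$ and $\partial D_2$ to stay within distance $O(t^2)$ of $L$ at distance $t$ from $x$. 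Hence for all small $r$ the set $\mathcal S\cap B(x,r)$ lies in a $Cr^2$-neighborhood of the segment $L\cap B(x,r)$. For any family of disjoint balls $B(y_j,\delta r)\subset B(x,r)$ with $y_j\in\mathcal S$, the projections of the centers to $L$ are pairwise at least $2\delta r-2Cr^2$ apart inside an interval of length $2(1-\delta)r$, so the family has at most $(1+o(1))/\delta$ members as $r\to0$. Thus $N\delta^s\leq (1+o(1))\,\delta^{s-1}<1$ for any fixed $s>1$ once $r$ is small: no fixed pair $(N,\delta)$ with $N\delta^s=1$ and $s>1$ can work at $x$, and the same obstruction occurs at near-tangencies, which fall between your two cases (``one dominant element'' versus ``many small elements''). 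This is not a matter of propagating constants or of an elongated element wrapping around $B$; near a tangency the dimension excess is simply invisible at any single scale, so no Moran--Hutchinson tree with uniform branching data exists there.

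This is precisely why neither Larman's proof nor the paper's is a mass-distribution construction. The paper instead integrates over all vertical lines $L_x$ the quantities $g_m(x)=\sum_r\ell(x,r,m)^{s-1}$ attached to covers of intermediate residual sets, and proves the telescoping inequality \eqref{theorem:main_estimate}, in which each region $B_m$ contributes a definite gain of order $\length(\partial B_m)^{s}$ to $\int_0^1 g_m-\int_0^1g_{m-1}$ (via Lemmas \ref{lemma:basic_estimate} and \ref{lemma:polygon_good_estimate}); it is the sum of these gains over \emph{all} regions --- an averaged, multi-scale quantity --- that dominates $\sum_j\ell(K_j)^s$ for an arbitrary cover $\{K_j\}$ and yields $s>1$. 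To rescue your scheme you would need a non-uniform decomposition in which $\delta$ degenerates with the cusp geometry at (near-)tangencies, which is a substantially different and more delicate argument than the one you outline.
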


As already mentioned, Larman in \cite{Larman:DimensionPackings} proves that the residual set of any circle packing \textit{inside a square} has Hausdorff dimension bounded away from $1$. We remark that our result is also new for arbitrary circle packings, not necessarily contained in a  square. It turns out that this does not follow immediately from Larman's result and one needs to refine carefully his argument to achieve this. Hence, we obtain the following corollary.

\begin{corollary}
There exists a constant $K>1$ such that for every open set $\Omega\subset \R^2$ and for every circle packing $\mathcal P$ inside $\Omega$, the residual set $\mathcal S$ of $\mathcal P$  satisfies 
\begin{align*}
\dim_{\mathscr H} (\mathcal S) >K>1.
\end{align*}
\end{corollary}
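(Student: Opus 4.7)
The corollary should follow as an essentially immediate consequence of Theorem \ref{theorem:main}, the entire difficulty being absorbed into the proof of that theorem. The plan is to verify that every circle packing, regardless of the ambient open set $\Omega$, satisfies the hypothesis of Theorem \ref{theorem:main} with a single universal value of the constant $k$, and then simply invoke the theorem with that value.

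Concretely, for each disk $D_i$ with radius $r_i$, the boundary $\partial D_i$ is a smooth (indeed real-analytic) convex curve with constant curvature equal to $1/r_i$, and its length is $\length(\partial D_i) = 2\pi r_i$. Therefore
\begin{align*}
\text{curvature}(\partial D_i) = \frac{1}{r_i} = \frac{2\pi}{\length(\partial D_i)},
\end{align*}
so the curvature bound required in Theorem \ref{theorem:main} holds with equality for the choice $k = 2\pi$ (or indeed with strict inequality for any $k \in (0, 2\pi)$). This value of $k$ depends neither on the individual disk nor on the packing $\mathcal P$ nor on the open set $\Omega$. Applying Theorem \ref{theorem:main} with $k = 2\pi$, we obtain a constant $K = K(2\pi) > 1$ with the property that for any circle packing $\mathcal P$ inside any open set $\Omega \subset \R^2$, the residual set $\mathcal S$ satisfies $\dimh(\mathcal S) > K$.

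There is no real obstacle in the proof of the corollary itself; the work has all been moved into establishing Theorem \ref{theorem:main}. The only thing to remark on is that the statement holds regardless of the geometry of $\Omega$, in contrast to Larman's original theorem which concerned packings confined to a square. The reason our approach escapes this restriction is that Theorem \ref{theorem:main} is inherently local: it produces a lower dimension bound for $\mathcal S \cap U$ for every open set $U$ meeting $\mathcal S$. Thus even when $\Omega$ has complicated shape, choosing any small open $U$ where $\mathcal S$ is nontrivial and applying the theorem there yields the desired uniform bound.
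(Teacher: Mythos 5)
Your proposal is correct and matches the paper's intent exactly: the paper states the corollary as an immediate consequence of Theorem \ref{theorem:main}, and the key observation is precisely yours, namely that a circle of radius $r_i$ has curvature $1/r_i = 2\pi/\length(\partial D_i)$, so the hypothesis holds with the universal constant $k=2\pi$. Nothing further is needed.
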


The locality of our result and the fact that we are not working with packings in a square or rectangle, but with packings in an arbitrary set $\Omega$, pose some considerable complication to the proof.  On the other hand, our proof provides a simplification of Larman's argument; he deals with three cases, two of which are lengthy. Instead, we manage to combine two of Larman's cases into a single case with a short proof. We have attempted to refine the techniques of Eggleston \cite{Eggleston:gasket} and Larman \cite{Larman:DimensionPackings} in order to give a transparent proof of the main theorem, based on general properties of convex, smooth curves, as opposed to the work of Larman, which relies on some special trigonometric identities and relations between chords and tangents of the circle.

In our proof we use some modern tools from analysis in metric spaces, such as chord-arc curves and Ahlfors $2$-regular regions. Roughly speaking, chord-arc curves behave like circles in terms of length, in the sense that the length of arcs connecting two points is not much longer than the length of the chord that connects the same points. Analogously, Ahlfors $2$-regular regions behave like round disks in terms of area. The first step of our proof, which is carried out in Section \ref{section:preliminaries}, is to show that curves $C$ satisfying the curvature bound $\kappa\geq k/\length(C)$ are chord-arc curves and the regions that they bound are Ahlfors $2$-regular, with uniform constants, depending only on $k$. Achieving the uniformity of the constants throughout the paper turns out to be a challenging task.

In Section \ref{section:proof_main} we prove the main theorem. After some reductions, we reduce the statement to an iterative estimate for finite packings; see Section \ref{section:main_estimate}. Then the rest of Section \ref{section:proof_main} is devoted to the establishment of this estimate.

In Theorem \ref{theorem:main} the curves $\partial D_i$ are assumed to be smooth. In fact, we only need $C^2$-smoothness, so that the curvature is defined. Moreover, one can relax even further this assumption to curves that can be approximated by smooth curves. This allows the curves $\partial D_i$ to have corners.

\begin{theorem}\label{theorem:relaxation}
Let $k>0$. The conclusion of Theorem \ref{theorem:main} is true for packings $\mathcal P_{\Omega}=\{D_i\}_{i\in \N}$ for which every curve $\partial D_i$, $i\in \N$, can be approximated in the Hausdorff metric by smooth curves $C$ whose curvature is bounded below by $k/\length(C)$.
\end{theorem}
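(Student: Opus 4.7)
The plan is to show that the Hausdorff-approximability hypothesis of Theorem \ref{theorem:relaxation} transfers the two essential metric properties used in the proof of Theorem \ref{theorem:main}---the chord-arc property for $\partial D_i$ and Ahlfors $2$-regularity for $D_i$---to the (possibly non-smooth) curves of the relaxed packing, with uniform constants depending only on $k$. Since, according to the discussion following Theorem \ref{theorem:main}, smoothness and the curvature bound enter the argument only through Section \ref{section:preliminaries} where these two properties are extracted, once they are in place for each $\partial D_i$ and $D_i$ the argument of Section \ref{section:proof_main} applies verbatim.

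I would first establish two preservation lemmas under Hausdorff convergence. The chord-arc lemma says that if Jordan curves $C_n$ are uniformly $\lambda$-chord-arc and $C_n\to \Gamma$ in the Hausdorff metric, then $\Gamma$ is $\lambda$-chord-arc. The proof parametrizes each $C_n$ by arc length, rescales the domain to $[0,1]$ so that each parametrization is $L_n$-Lipschitz (with $L_n$ bounded by a constant multiple of $\lambda\,\diam(C_n)$), and applies an Arzel\`a--Ascoli extraction to obtain a uniform limit parametrizing $\Gamma$; lower semicontinuity of length under uniform convergence of Lipschitz curves then transfers the chord-arc bound. The Ahlfors-regularity lemma says that if Jordan regions $D_n$ are uniformly $c$-Ahlfors $2$-regular and $\overline{D_n}\to \overline{D}$ in Hausdorff metric, then $D$ is $c$-Ahlfors $2$-regular. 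Given $x\in \overline D$ and $0<r\leq \diam(D)$, pick $x_n\in \overline{D_n}$ with $x_n\to x$, use Hausdorff convergence to conclude that eventually $D_n\cap B(x_n,r)\subset \{y:\dist(y,\overline D)<\varepsilon\}\cap B(x,r+\varepsilon)$ for any $\varepsilon>0$, and pass to the limit in the inequality $|D_n\cap B(x_n,r)|\geq cr^2$; the final identification $|\overline D\cap \overline{B(x,r)}|=|D\cap B(x,r)|$ uses that the limit $\partial D$ has zero planar area, which follows from the chord-arc property established in the previous lemma.

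Applying these two lemmas to the setting of Theorem \ref{theorem:relaxation}, Section \ref{section:preliminaries} furnishes a chord-arc constant $\lambda_0=\lambda_0(k)$ and an Ahlfors regularity constant $c_0=c_0(k)$ valid for every smooth approximant of any $\partial D_i$. The preservation lemmas then give that $\partial D_i$ is $\lambda_0$-chord-arc and $D_i$ is $c_0$-Ahlfors $2$-regular for every $i\in \N$, with the \emph{same} constants as in the smooth case of Theorem \ref{theorem:main}. Feeding this data into the proof of Section \ref{section:proof_main}, which operates purely at the level of chord-arc and Ahlfors $2$-regular estimates, produces the same lower bound $K=K(k)>1$ on $\dimh(\mathcal S\cap U)$ for every open $U$ meeting $\mathcal S$.

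The main obstacle I anticipate is making precise the claim that Section \ref{section:proof_main} uses nothing beyond the chord-arc and Ahlfors $2$-regularity properties of the $\partial D_i$ and $D_i$. If some auxiliary step in the iterative estimate implicitly invokes $C^2$-smoothness in a more delicate way---for example, in controlling tangent-direction behavior along $\partial D_i$ near a point of tangency with another curve---then that step must be re-derived from the chord-arc condition alone. Should this fail, a fallback is a diagonal approximation at the level of finite sub-packings: replace the first $N$ regions $D_1,\dots,D_N$ by slightly shrunken smooth approximants (staying inside the original $D_i$ and preserving pairwise disjointness and the curvature bound up to scaling), apply Theorem \ref{theorem:main} to the truncated smooth packing, and pass to the limit using the same preservation arguments together with the locality built into the conclusion of Theorem \ref{theorem:main}.
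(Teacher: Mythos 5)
Your reduction is too coarse, and the obstacle you flag at the end is exactly where the proof actually lives. The argument of Section \ref{section:proof_main} does \emph{not} operate purely at the level of the chord-arc and Ahlfors $2$-regularity properties. It uses Lemma \ref{lemma:arc_tangent}\,(i) directly and quantitatively: the distance from a point $z_1\in C$ to the tangent line at $z_2$ is bounded below by $\tfrac{k}{2\pi}\length(C|[z_1,z_2])^2/\length(C)$. This estimate is the engine of Lemma \ref{lemma:polygon_good_estimate}\,(i) (the lower bound on $\length(I_i)$ for vertical lines avoiding the strip of $Z(P)$), of Lemma \ref{lemma:midpoint}, and of the refinement of the polygon in Section \ref{section:polygon_tilde}; strict convexity (Lemma \ref{lemma:intersection_of_two}) is likewise needed for the polygon construction. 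None of this follows from the chord-arc condition: a square boundary is chord-arc and bounds an Ahlfors $2$-regular region, yet the distance to a supporting line along a flat side is identically zero, so Lemma \ref{lemma:arc_tangent}\,(i) fails. Consequently, establishing that the limit curves are $L$-chord-arc and bound Ahlfors $2$-regular regions (which your two preservation lemmas do, correctly) is not sufficient; one must also show that Lemma \ref{lemma:arc_tangent}\,(i), with tangent lines replaced by supporting lines, and strict convexity survive the Hausdorff limit. That is precisely what the paper's proof does, and it requires two nontrivial ingredients you do not supply: convergence of the lengths of the approximating curves and of corresponding arcs (Lemma \ref{lemma:length_convergence}, via Crofton's formula), and convergence of tangent directions, i.e.\ of one-sided derivatives of convex functions (Lemma \ref{lemma:convex}).

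Your fallback does not repair this. Replacing $D_1,\dots,D_N$ by shrunken smooth approximants $D_i'\subset D_i$ \emph{enlarges} the residual set, since $\Omega\setminus\bigcup D_i'\supset \Omega\setminus\bigcup D_i$; a lower dimension bound for the modified packing could be carried entirely by the annuli $D_i\setminus D_i'$, which are absent from the original residual set, so the bound does not transfer back. Truncating to a finite sub-packing has the same defect in a stronger form (its residual set has nonempty interior). The correct route is the one the paper takes: verify that every auxiliary estimate of Section \ref{section:preliminaries} used in Section \ref{section:proof_main} passes to Hausdorff limits of curves satisfying the curvature bound, with supporting lines in place of tangent lines, and then run the main argument unchanged.
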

We discuss the definition of Hausdorff convergence and the proof of this generalization in Section \ref{section:relaxation}. Essentially it follows from a small modification of the auxiliary results that were used in the proof of Theorem \ref{theorem:main}.

If we do not impose bounds on the curvature, then the conclusions of the main theorem do not hold.
\begin{theorem}\label{theorem:counterexample}
Let $\Omega\subset \R^2$ be an open set.
\begin{enumerate}[\upshape(i)]
\item There exists a packing $\mathcal P_{\Omega}$ by convex, smooth curves whose residual set  has $\sigma$-finite Hausdorff $1$-measure and, in particular, Hausdorff dimension equal to $1$.
\item There exists a packing $\mathcal P_{\Omega}$ by strictly convex, smooth curves whose residual set  has Hausdorff dimension equal to $1$.
\end{enumerate}
\end{theorem}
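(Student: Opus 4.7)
\textbf{Proof plan for Theorem~\ref{theorem:counterexample}.}

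My plan is to construct explicit packings realizing both conclusions, after a preliminary reduction. Given an open set $\Omega$, take a Whitney-type decomposition $\Omega=\bigcup_i \overline{Q_i}$ into closed dyadic squares with pairwise disjoint interiors; the union $\bigcup_i \partial Q_i$ is a countable union of line segments, hence of $\sigma$-finite $\mathcal{H}^1$-measure (and Hausdorff dimension $1$). If each open square $Q_i$ admits a packing with residual $\mathcal S_i$ satisfying the respective conclusion, then the combined packing has residual contained in $\bigcup_i \partial Q_i \cup \bigcup_i \mathcal S_i$, and the conclusion is preserved under countable unions. It therefore suffices to treat $\Omega=(0,1)^2$.

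For part~(i), the key observation is that two smooth convex tiles participating in a packing can share a common boundary arc of positive length only when that arc is a straight line segment; otherwise strict convexity on one side forces the other side to be non-convex near the arc. This motivates using stadium-shaped tiles (the convex hull of two equal disks): convex regions at least $C^{1,1}$-smooth, smoothable to $C^{\infty}$ without changing the geometric picture. I stack horizontal stadiums in thin horizontal strips $(0,1)\times(y_n,y_{n+1})$ with $y_n\nearrow 1$, choosing adjacent stadiums to share long portions of their flat horizontal edges. The residual within each strip then consists of the tile boundaries (rectifiable arcs), the shared horizontal segments, and two small ``corner pockets'' along the vertical sides of the square where the semicircular caps fail to match; each pocket is filled recursively by a rescaled copy of the same stadium construction. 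Choosing tile sizes so that the total perimeter of all tiles is finite and the limit set of nested pockets is concentrated on a countable union of line segments, the residual lies in a countable union of rectifiable curves and has $\sigma$-finite $\mathcal{H}^1$-measure. For part~(ii), each stadium is replaced by a highly eccentric ellipse inscribed in the same bounding rectangle; two tangent such ellipses meet at a single point rather than along a segment, so the shared horizontal edges are replaced by thin lens-shaped residual regions. Choosing the eccentricities at each scale so that the ellipses approximate the corresponding stadiums in Hausdorff distance sufficiently well, the lens regions collectively contribute at most dimension $1$, and the full residual---containing the $1$-dimensional tile boundaries---has Hausdorff dimension exactly $1$.

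The main obstacle is controlling the dimension of the residual of the recursive step inside a corner pocket. A naive Apollonian-style filling by inscribed disks would produce a self-similar residual of Hausdorff dimension strictly greater than $1$ (the Apollonian dimension $\approx 1.305$), which would fail both conclusions. Instead, each pocket must be packed with many elongated stadium (respectively, ellipse) tiles placed in a row, with the aspect ratios of the tiles decaying geometrically across scales so that the limit set of nested pockets has dimension at most~$1$. This step crucially exploits the absence of a uniform lower curvature bound: the curvatures of the tiles decay faster than $1/\length(\partial D_i)$ at deeper scales, placing the construction outside the hypothesis of Theorem~\ref{theorem:main}.
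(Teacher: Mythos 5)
Your overall architecture (tile the square by convex pieces that nearly fill it, leave small gaps, recurse) is in the right spirit, but the plan as written has two genuine gaps, both traceable to the absence of the one quantitative estimate that actually drives the paper's proof. The paper does not recurse \emph{into pockets of a fixed shape}; at each stage $n$ it re-covers the \emph{entire} remaining open set $\Omega_n$ by a fresh decomposition into squares, inscribes in each square a convex (resp.\ strictly convex) region whose complement in that square is covered by balls of radius $\alpha_i$, and chooses the $\alpha_i$ so that the leftover set $\Omega_{n+1}$ is covered by balls $B_{n,j}$ with $\sum_j r_{n,j}<\varepsilon_n$ (part (i)), resp.\ $\sum_j r_{n,j}^{s_n}<\varepsilon_n$ with $s_n\downarrow 1$ (part (ii)). The limit set $E=\bigcap_n\Omega_n$ then has $\mathscr H^1(E)=0$, resp.\ $\mathscr H^s(E)=0$ for every $s>1$, by definition of Hausdorff content, and the residual is $E$ together with countably many rectifiable boundary curves. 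Your plan replaces this covering estimate with two assertions that are not justified and, in one case, cannot be justified as stated.

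First, ``each pocket is filled recursively by a rescaled copy of the same stadium construction'' is ill-posed: the corner pockets are bounded by circular arcs and are not similar to the unit square, so there is no ``rescaled copy'' to insert; you would have to re-decompose each pocket into squares and start over, which is exactly the paper's global iteration, and then the claim that ``the limit set of nested pockets is concentrated on a countable union of line segments'' needs the quantitative covering control above --- a genuinely recursive pocket-in-pocket construction generically produces a Cantor-type limit set whose dimension must be \emph{estimated}, not read off. (Finiteness of the total perimeter of the tiles, which you invoke, gives no control on the limit set.) Second, and more seriously, in part (ii) the ``thin lens-shaped residual regions'' between tangent ellipses are nonempty \emph{open} sets; if they remain in the residual set they contribute Hausdorff dimension $2$ outright, and no choice of eccentricities or Hausdorff-distance approximation changes that. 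They must themselves be packed, ad infinitum, and the only way to conclude that the union of all leftover open regions collapses to a dimension-$1$ set in the limit is again the estimate $\sum_j r_{n,j}^{s_n}<\varepsilon_n$ with $s_n\to 1$. Your closing observation --- that the construction must violate the hypothesis $\kappa\geq k/\length(\partial D_i)$ of Theorem \ref{theorem:main} and that an Apollonian-style filling would fail --- is correct and consistent with the paper (whose tiles are rounded squares and conformal images of circles $|z|=r$, $r\to1$, with degenerating normalized curvature), but it does not substitute for the missing covering argument.
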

This theorem is proved in Section \ref{section:example}. We remark that in the second example the residual set $\mathcal S$ cannot have $\sigma$-finite Hausdorff $1$-measure and one can only guarantee that $\dim_{\mathscr H}(\mathcal S)=1$. This follows from the next theorem and the observation that any two strictly convex Jordan curves that bound disjoint regions can intersect in at most one point.
\begin{theorem}\label{theorem:dimension_one}
Let $\Omega\subset\R^2$ be an open set and $\mathcal P_{\Omega}=\{D_i\}_{i\in \N}$ be a packing such that $\partial D_i\cap \partial D_j$ is at most countable for each $i,j\in \N$ with $i\neq j$. Then the residual set of $\mathcal P_{\Omega}$ cannot have $\sigma$-finite Hausdorff $1$-measure.
\end{theorem}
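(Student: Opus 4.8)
We argue by contradiction. Suppose $\mathcal{S}\neq\emptyset$ (the case $\mathcal{S}=\emptyset$ being trivial) and that $\mathcal{S}$ has $\sigma$-finite Hausdorff $1$-measure; I will derive a contradiction. Writing $\mathcal{S}$ as a countable union of sets of finite $\mathcal{H}^1$-measure shows $\mathcal{H}^2(\mathcal{S})=0$, so the open set $\bigcup_i D_i$ is dense in $\Omega$, $\mathcal{S}$ is nowhere dense, there are infinitely many disks, and $\partial D_i\cap\Omega\subseteq\mathcal{S}$ for every $i$ (a point of $\partial D_i\cap\Omega$ lies in no $D_j$, as the $D_j$ are open and pairwise disjoint). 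Moreover, $\sigma$-finiteness together with the coarea (Eilenberg) inequality gives, for every direction, that $\mathcal{L}^1$-almost every line of that direction meets $\mathcal{S}$ in an at most countable set, and, for every point $x$, that almost every circle centred at $x$ meets $\mathcal{S}$ in an at most countable set.

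The first substantial step is a \emph{surrounding lemma}: if $x_0\in\mathcal{S}$ and $\overline{B(x_0,r)}\subseteq\Omega$, then no $D_i$ contains the circle $\partial B(x_0,r)$ — since $D_i$ is a Jordan region, hence simply connected, it would otherwise contain the whole disk $B(x_0,r)$, and in particular $x_0$, contradicting $x_0\in\mathcal{S}$ — so $\mathcal{S}\cap\partial B(x_0,r)\neq\emptyset$ for all such $r$. The second step is a \emph{projection estimate}: fix $x_0\in\mathcal{S}$ and $r_0$ with $\overline{B(x_0,r_0)}\subseteq\Omega$, put $X=\mathcal{S}\cap\overline{B(x_0,r_0)}$, and claim that $\pi_\theta(X)$ has positive length for \emph{every} direction $\theta$. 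Indeed, if not, then for a.e.\ chord $\ell$ of $B(x_0,r_0)$ perpendicular to $\theta$ we have $\ell\cap X=\emptyset$, so $\ell\subseteq\bigcup_i D_i$, and being connected $\ell$ lies in a single $D_i$. Grouping these chords by the disk containing them gives sets $E_i$ of parameters partitioning a co-null subset of the parameter interval; a density-point argument (shrink $\ell$ to a compact sub-segment and use openness of the $D_i$) shows that up to null sets each $E_i$ equals an open set $U_i$, the $U_i$ being pairwise disjoint with co-null union. If only one $U_{i_0}$ is nonempty, then $\overline{D_{i_0}}\supseteq\overline{B(x_0,r_0)}$, whence $x_0\in\inter\overline{D_{i_0}}=D_{i_0}$, impossible. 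Otherwise a supremum argument produces $a\neq b$ and a parameter $c^\ast\in\overline{U_a}\cap\overline{U_b}$ approached by $U_a$ from one side and by $U_b$ from the other; taking Hausdorff limits of the corresponding chords, the whole chord at $c^\ast$ lies in $\overline{D_a}\cap\overline{D_b}$, and since $D_a,D_b$ are disjoint open sets this chord lies in $\partial D_a\cap\partial D_b$, an uncountable set, contradicting the hypothesis. The same argument applies to $\mathcal{S}\cap\overline{B(x,\rho)}$ for every $x\in\mathcal{S}$ and every small $\rho$.

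It remains to turn these two facts, together with $\sigma$-finiteness, into a contradiction, and this last step is where I expect the main difficulty. Since $X$ has $\sigma$-finite $\mathcal{H}^1$-measure, the Besicovitch--Federer projection theorem forces its purely $1$-unrectifiable part to project to a null set in almost every direction; by the projection estimate the $1$-rectifiable part of $\mathcal{S}$ must carry positive $\mathcal{H}^1$-measure, in fact in every neighbourhood of every point of $\mathcal{S}$. At $\mathcal{H}^1$-a.e.\ point $q$ of the rectifiable part there is an approximate tangent line $T_q$, along which $\mathcal{S}$ is concentrated up to arbitrarily small $\mathcal{H}^1$-mass in a thin strip, so for small $s$ the two sides of $B(q,s)$ off this strip are covered, up to $\mathcal{H}^1$-null mass, by $\bigcup_i D_i$; projecting onto $T_q$ yields an interval $J$ of parameters $t$ for which the perpendicular segment at $t$ into the upper (resp.\ lower) side lies in a single disk $D_{a(t)}$ (resp.\ $D_{b(t)}$), and after refining $J$ to a positive-measure subset we may take $a(t)\equiv a$, $b(t)\equiv b$ with $a\neq b$. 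Extending these segments towards $T_q$, each first meets $\mathcal{S}$ in a point of $\partial D_a$ (resp.\ $\partial D_b$); the crux is to show that for a positive-measure set of $t$ these two first-hitting points coincide, so that this set of points forms an uncountable subset of $\partial D_a\cap\partial D_b$, which is the desired contradiction. This coincidence — equivalently, that near $T_q$ the slice $I_t\cap\mathcal{S}$ collapses to a single point for positive-measure-many $t$ — is the main obstacle, and it must be extracted from the fact that $\mathcal{S}$ has $\mathcal{H}^1$-null (indeed at most countable) slices together with the fine structure of $1$-rectifiable sets, rather than from $\sigma$-finiteness of $\mathcal{H}^1$ alone.
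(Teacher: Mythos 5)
Your proposal is not a proof: by your own admission the final and decisive step is missing. Everything up to the last paragraph only establishes (modulo the issue below) that the rectifiable part of $\mathcal S$ carries positive $\mathscr H^1$-measure near every point of $\mathcal S$; the contradiction is supposed to come from showing that, near a point $q$ with approximate tangent $T_q$, the perpendicular slices of $\mathcal S$ collapse to a single point for a positive-measure set of parameters, so that the first-hitting points from the two sides coincide and sweep out an uncountable subset of $\partial D_a\cap \partial D_b$. You correctly identify this as the crux and do not prove it; approximate tangents only control $\mathscr H^1$-mass in thin strips and say nothing about individual slices, while the slicing information you do have (a.e.\ slice is at most countable) is far too weak to force a slice to be a singleton. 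A second, smaller gap sits inside your projection estimate: the ``supremum argument'' producing a parameter $c^\ast$ approached by $U_a$ from one side and by a \emph{single} $U_b$, $b\neq a$, from the other is asserted, not proved. The union $\bigcup_{i\neq a}U_i$ can accumulate at a boundary point of $U_a$ through infinitely many distinct indices; in that case the limiting chord lies in $\partial D_a$ (hence in $\mathcal S$), but you obtain no second index and no contradiction with the countability hypothesis. Both points require genuine arguments, and the first one is essentially the whole theorem.

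For comparison, the paper's proof avoids rectifiability, projections, and the Besicovitch--Federer theorem entirely. It uses only the slicing fact ($\sigma$-finite $\mathscr H^1$ forces a.e.\ horizontal line to meet the set in an at most countable set) and shows directly that every horizontal line $L$ avoiding a fixed countable set of heights --- the heights realized by the countable set $F=\bigcup_{i\neq j}(\partial D_i\cap\partial D_j)$ together with, for each $i$, the countable set of local extreme values of $\im \gamma_i$ for a parametrization $\gamma_i$ of $\partial D_i$ --- meets $\mathcal S$ in a perfect set, hence in an uncountable set when the intersection is nonempty. Excluding the local extreme heights guarantees that $L$ passes from the inside to the outside of $D_{i_0}$ arbitrarily close to any $z\in L\cap\partial D_{i_0}$, and excluding $F$ guarantees that the boundary $\partial D_j$ which $L$ must then cross is met at a point different from $z$; no structure theory of $1$-sets is needed.
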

The proof of this theorem is elementary and is given in Section \ref{section:example}.

We finish the Introduction with some discussion on the dimension of packings by non-smooth or fractal curves.  Theorem \ref{theorem:counterexample} already shows that one cannot expect any dimension bounds without further assumptions. For example, squares can always be packed perfectly with no gaps, and any such packing has dimension $1$. Instead, we consider a separation condition that prevents the regions in a packing from touching. For $M>0$, we say that two bounded regions $D_1,D_2$ are $M$-relatively separated if 
\begin{align*}
\Delta(D_1,D_2) \coloneqq \frac{\dist(D_1,D_2)}{\min \{\diam(D_1),\diam(D_2)\}} \geq M.
\end{align*}
Moreover, we say that a Jordan curve $C$ is an $M$-quasicircle if for any two points $z,w\in C$ there exists an arc $A$ of $C$ connecting $z$ and $w$ such that $\diam(A)\leq M|z-w|$.  It is proved in \cite[Proposition 4.1]{Ntalampekos:Semihyperbolic} that the residual set of a planar packing $\mathcal P$ by Jordan regions $\{D_i\}_{i\in \N}$ that are pairwise $M$-relatively separated and whose boundaries are $M$-quasicircles has Hausdorff dimension bigger than a constant $K>1$, where $K$ depends only on $M$. The latter dependence follows from a result of Mackay \cite[Theorem 1.1]{Mackay:ConformalDimension}. Such packings appear naturally in the setting of Complex Dynamics (Figure \ref{figure:julia}); for example, see \cite[Theorem 1.10]{BonkLyubichMerenkov:carpetJulia}. 

\begin{figure}
\centering
\includegraphics[scale=0.6]{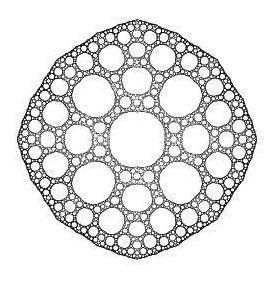}
\caption{The Julia set of $z^2-\frac{1}{16z^2}$.}\label{figure:julia}
\end{figure}

We pose some questions for further study.

\begin{question}
Suppose that there exists a convex, smooth Jordan curve $C$ (with possibly vanishing curvature) such that all of the curves $\partial D_i$ in the packing $\mathcal P_{\Omega}=\{D_i\}_{i\in \N}$ are images of $C$ under Euclidean similarities. Is the Hausdorff dimension of the residual set of $\mathcal P_{\Omega}$ larger than $1$?
\end{question}
If the curvature of $C$ is non-vanishing then this follows immediately from Theorem \ref{theorem:main}. However, if the curvature of $C$ vanishes, then we cannot expect that the local Hausdorff dimension is larger than $1$. For example, suppose that two regions of a packing are smoothened squares with disjoint interiors that meet at a segment in their boundary.  Then the local Hausdorff dimension of the residual set at points of that segment is equal to $1$. Hence, we can only ask whether the global Hausdorff dimension of the residual set is larger than $1$.

\begin{question}
Can the conclusion of Theorems \ref{theorem:main} and \ref{theorem:relaxation} be generalized to packings in higher dimensions by convex, smooth bodies under some curvature bounds? What notion of curvature should one consider?
\end{question}

\bigskip

\section{Preliminaries on curves of bounded curvature}\label{section:preliminaries}
In this section we include some definitions and some preliminaries on curves of bounded curvature. We first discuss some notation. Throughout the paper, we will use symbols $c,c',c'',\dots$ and $c_0,c_1,\dots$ for positive constants. We will mention the dependence of constants on various parameters when necessary, or say that the constants are uniform if there is no dependence. We will freely use the same symbols to denote possibly different constants even within the same proof, whenever this does not lead to a confusion. 

The Euclidean plane $\R^2$ is identified with the complex plane $\C$ and we use interchangeably the notations $(x,y)$ and $x+iy$ for the same point in the plane. We use the notation $|E|$ for the Lebesgue measure of a measurable set $E\subset \R$.

For $s>0$ the \textit{$s$-dimensional Hausdorff measure} $\mathscr H^s(E)$ of a set $E\subset \C$ is defined by
$$\mathscr{H}^{s}(E)=\lim_{\delta \to 0} \mathscr{H}_\delta^{s}(E)=\sup_{\delta>0} \mathscr{H}_\delta^{s}(E),$$
where
$$
\mathscr{H}_\delta^{s}(E)\coloneqq \inf \left\{ \sum_{j=1}^\infty \operatorname{diam}(U_j)^{s}: E \subset \bigcup_j U_j,\, \operatorname{diam}(U_j)<\delta \right\}.
$$
The quantity $\mathscr{H}_\delta^{s}(E)$ is called the \textit{$s$-dimensional Hausdorff $\delta$-content} of $E$. The \textit{Hausdorff dimension}  of $E$ is defined by
\begin{align*}
\dim_{\mathscr H}(E)= \inf\{ s\geq 0 : \mathscr H^s(E)=0 \}= \sup\{ s\geq 0: \mathscr H^s(E)=\infty\}.
\end{align*}
The Hausdorff dimension of a set does not change if in the definition of $\mathscr{H}_\delta^{s}(E)$ we use only covers by closed squares with sides parallel to the coordinate axes, instead of using covers by arbitrary sets $U_j$. 

A \textit{Jordan curve} $C$ is a compact subset of $\C$ that is the homeomorphic image of $\mathbb T=\R/\Z$. The latter is identified with the interval $[0,1]$ after identifying its endpoints. A Jordan curve $C$ is \textit{convex} if the line segment between any two points of $C$ is contained in the closed region that the curve $C$ bounds. We say that $C$ is \textit{strictly} convex if it is convex and does not contain any line segments. A Jordan curve $C$ is $C^2$-\textit{smooth} if there exists a parametrization $\gamma\colon \mathbb T\to C $ that is two times continuously differentiable such that $\dot \gamma(t)\neq 0$ for all $t\in \mathbb T$. If $C$ is a $C^2$-smooth Jordan curve, then we can parametrize it by arc-length. In particular, there exists a constant speed parametrization $\gamma \colon \mathbb T\to C$ such that $|\dot\gamma(t)|=\length(C)$ for all $t\in \T$. Using this specific parametrization, the (unsigned) curvature of the curve $C$ is defined by 
\begin{align*}
\kappa(t)=\length(C)^{-2}\cdot |\ddot \gamma(t)|
\end{align*}
for $t\in \T$. In particular, if $\length(C)=1$, we have $\kappa = |\ddot \gamma|$.	 We note that if we scale a curve by a constant $\lambda>0$, then the curvature scales by $\lambda^{-1}$. If $\kappa \neq 0$ in $\T$, then it follows that $C$ is strictly convex. In what follows, a smooth Jordan curve refers always to a $C^2$-smooth Jordan curve. 

\begin{lemma}\label{lemma:intersection_of_two}
Let $D_1$ and $D_2$ be disjoint Jordan regions such that $\partial D_1$ and $\partial D_2$ are strictly convex curves. Then $\partial D_1 \cap \partial D_2$ contains at most one point.  
\end{lemma}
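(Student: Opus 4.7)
The plan is to argue by contradiction. Suppose $\partial D_1 \cap \partial D_2$ contains two distinct points $p$ and $q$. I will show that the entire segment $[p,q]$ must lie in $\partial D_1$, which contradicts the strict convexity of $\partial D_1$ (a strictly convex curve contains no line segments by definition).

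First, I would invoke convexity. By the paper's definition, a convex Jordan curve $C$ has the property that the segment between any two of its points is contained in the closed region bounded by $C$. Applying this to $\partial D_1$ with endpoints $p,q$ gives $[p,q]\subset \overline{D_1}$, and similarly $[p,q]\subset \overline{D_2}$. Therefore $[p,q]\subset \overline{D_1}\cap \overline{D_2}$.

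Next, I would argue that any point $x\in \overline{D_1}\cap \overline{D_2}$ must lie in $\partial D_1\cap \partial D_2$. Indeed, if instead $x\in D_1$, then since $D_1$ is open, some neighborhood of $x$ is contained in $D_1$; but $x\in \overline{D_2}$ forces this neighborhood to meet $D_2$, contradicting $D_1\cap D_2=\emptyset$. Thus $x\in \partial D_1$, and by the symmetric argument $x\in \partial D_2$. Combining with the previous step, $[p,q]\subset \partial D_1$, which is the desired contradiction with strict convexity.

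There is no real obstacle here; the only subtle point is the openness-plus-disjointness observation that forces $\overline{D_1}\cap \overline{D_2}$ into the intersection of boundaries, and this is immediate from the definitions. The argument is elementary and uses only that $D_1, D_2$ are disjoint open Jordan regions whose boundaries are convex and contain no line segments.
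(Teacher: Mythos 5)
Your proof is correct and follows essentially the same route as the paper: the paper's one-line argument that convexity makes $\partial D_1\cap\partial D_2$ connected is, at heart, the observation that this set equals the convex set $\overline{D_1}\cap\overline{D_2}$ (using disjointness and openness exactly as you do), after which strict convexity rules out a second point. You have simply written out explicitly the details the paper leaves implicit.
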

\begin{proof}
The convexity of $\partial D_1$ and $\partial D_2$ implies that the intersection $\partial D_1\cap \partial D_2$ is connected. The strict convexity implies that $\partial D_1\cap \partial D_2$ cannot contain more than one point. 
\end{proof}

\bigskip

\subsection{Chord-arc curves}
A Jordan curve $C$ is a \textit{chord-arc curve} if there exists a constant $L\geq 1$ such that for any pair of points $a,b\in C$, the shortest arc of $C$ that connects them, denoted by $C|[a,b]$, satisfies
\begin{align*}
\length(C|[a,b])\leq L |a-b|.
\end{align*}
In this case we say that $C$ is an $L$-chord-arc curve. This is a \textit{scale invariant} property. That is, if $C$ is an $L$-chord-arc curve, then for any $r>0$ the curve $rC=\{rz: z\in C\}$ is also $L$-chord-arc.

We will show that if $C$ is a smooth curve whose curvature is bounded below by $k/\length(C)$, as in the assumptions of Theorem \ref{theorem:main}, then $C$ is a chord-arc curve, \textit{quantitatively}, i.e., with constant depending only on $k$. We first establish some auxiliary results.

\begin{lemma}\label{lemma:arc_tangent}
Let $k>0$ and $C$ be a smooth Jordan curve with curvature $\kappa \geq k/\length(C)$. Then the following statements are true.
\begin{enumerate}[\upshape(i)]
\item  For each $z_1,z_2\in C$, if $L(z_2)$ is the line tangent to $C$ at $z_2$, we have 
\begin{align*}
\dist(z_1,L(z_2))\geq \frac{k}{2\pi} \cdot \frac{ \length(C|[z_1,z_2])^2}{\length(C)}. 
\end{align*}
\item The length of the projection of $C$ to any line is at least $\frac{k}{8\pi}\length(C)$.
\end{enumerate}
\end{lemma}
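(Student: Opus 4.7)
The plan is to prove (i) by reducing $\dist(z_1, L(z_2))$ to an integral of $\sin\theta$ along a tangent-angle parametrization and estimating this via a rearrangement argument; (ii) will then follow by applying (i) at one specific point. By scale invariance (both sides of each assertion scale linearly with $\length(C)$) I may assume $\length(C) = 1$.

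For (i), choose coordinates so that $z_2 = 0$, $L(z_2)$ is the $x$-axis, and the convex region bounded by $C$ lies in the closed upper half-plane. Let $\gamma\colon[0,1]\to C$ be the arc-length parametrization with $\gamma(0) = z_2$, and write $\dot\gamma(t) = (\cos\theta(t), \sin\theta(t))$; convexity together with $\kappa > 0$ makes $\theta$ monotone increasing from $0$ to $2\pi$, with $\dot\theta = \kappa \geq k$. With $s = \length(C|[z_1,z_2]) \leq 1/2$ and the parametrization oriented so that $z_1 = \gamma(s)$,
\begin{align*}
\dist(z_1, L(z_2)) = \im\gamma(s) = \int_0^s \sin\theta(t)\,dt \geq 0,
\end{align*}
the last inequality by convexity. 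The substitution $v = \theta(t)$ with $\rho(v) = 1/\kappa(\theta^{-1}(v))$ rewrites this integral as $\int_0^{\theta(s)} \sin v \cdot \rho(v)\,dv$ with $\rho \in [0,1/k]$ and $\int_0^{\theta(s)}\rho\,dv = s$.

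Split on whether $\theta(s) \leq \pi$ or $\theta(s) > \pi$. In the first case $\sin v \geq 0$ on $[0,\theta(s)]$; minimizing over admissible $\rho$ by a one-dimensional $L^\infty/L^1$-constrained rearrangement (the bang-bang minimizers are of the form $(1/k)\mathbf 1_E$ with $|E| = ks$, placed where $\sin v$ is smallest, namely near the endpoints of $[0,\theta(s)]$) yields the lower bound $\tfrac{4}{k}\sin^2(ks/4)$, attained in the limit $\theta(s) = \pi$ with $E = [0,ks/2]\cup[\pi-ks/2,\pi]$. The elementary inequality $\sin x \geq x\sqrt{2/\pi}$ on $[0,\pi/4]$ (obtained from the monotonicity of $\sin x / x$) then gives $\tfrac{4}{k}\sin^2(ks/4) \geq \tfrac{ks^2}{2\pi}$. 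In the second case, the closure identity $\int_0^1\sin\theta\,dt = 0$ and the substitution $\tilde\phi(\tau) = 2\pi - \theta(1-\tau)$ on $[0,1-s]$ produce an increasing function with $\tilde\phi(0) = 0$, $\tilde\phi(1-s) = 2\pi - \theta(s) \leq \pi$, and $d\tilde\phi/d\tau \geq k$; applying the first case to $\tilde\phi$ with arc length $1-s$ yields
\begin{align*}
\int_0^s \sin\theta\,dt = \int_0^{1-s}\sin\tilde\phi(\tau)\,d\tau \geq \frac{k(1-s)^2}{2\pi} \geq \frac{ks^2}{2\pi},
\end{align*}
the last inequality since $1-s \geq s$.

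For (ii), take the given line to be the $x$-axis; the projection length equals $\sup_{z\in C}\re z - \inf_{z\in C}\re z$. Let $z_-$ attain the infimum; smoothness forces $L(z_-)$ to be vertical. Take $z_1\in C$ at arc length $1/2$ from $z_-$; then the projection length is at least $\dist(z_1, L(z_-))$, and (i) with $s = 1/2$ gives $\dist(z_1, L(z_-)) \geq \tfrac{k}{2\pi}\cdot\tfrac{1}{4} = \tfrac{k}{8\pi}$. The main obstacle will be the rearrangement analysis in the $\theta(s) \leq \pi$ case of (i): it reduces to elementary one-dimensional linear programming, but identifying the extremal $\rho$ across all $\theta(s) \in [ks,\pi]$ and verifying the final inequality $\tfrac{4}{k}\sin^2(ks/4) \geq \tfrac{ks^2}{2\pi}$ takes the most bookkeeping.
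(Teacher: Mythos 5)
Your argument is correct, but it takes a genuinely different route from the paper's. For (i), the paper circumscribes $C$ by a rectangle with one side on $L(z_2)$, proves the sharper bound $\dist(z_1,L(z_2))\geq \frac{k}{\pi}\length(C|[z_1,z_2])^2$ when $z_1,z_2$ lie in the same quarter-arc (via the pointwise estimate $\sin(\theta(t_2)-\theta(t))\geq \frac{2k}{\pi}(t_2-t)$), and then handles the general case by inserting an extreme point $z_3$, passing to the tangent line at $z_1$, and using $a^2+b^2\geq \frac{1}{2}(a+b)^2$ to lose a factor of $2$. You instead make a single global variational argument: writing the height as $\int_0^{\theta(s)}\sin v\cdot \rho(v)\,dv$ with $\rho=1/\kappa\in[0,1/k]$ of total mass $s$, minimizing by the bathtub principle (extremal $E=[0,ks/2]\cup[\pi-ks/2,\pi]$, value $\tfrac{4}{k}\sin^2(ks/4)$), and disposing of the case $\theta(s)>\pi$ via the closing identity $\int_0^1\sin\theta=0$ together with the reflection $\tilde\phi(\tau)=2\pi-\theta(1-\tau)$ and $1-s\geq s$. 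All the numerics check out: $ks\leq\pi$ in the relevant range, $\sin x\geq x\sqrt{2/\pi}$ on $[0,\pi/4]$ holds since $\sin x/x$ decreases to $2\sqrt2/\pi>\sqrt{2/\pi}$, and $\tfrac{4}{k}\cdot\tfrac{2}{\pi}(ks/4)^2=\tfrac{ks^2}{2\pi}$ exactly. Your proof of (ii) is notably slicker: one application of (i) at the arc-length antipode of the extreme point of the projection, versus the paper's summation of the quarter-arc estimate over the four corner regions with a Cauchy--Schwarz step. What the paper's route buys is the stronger local inequality \eqref{lemma:arc_tangent_adjacent}, which it reuses within the proof of (ii) and is closer in spirit to how the lemma is invoked later (tangency at a single point of a polygon edge); what your route buys is brevity and a cleaner identification of the extremal configuration, at the cost of invoking the (standard but unproved here) bang-bang/bathtub step, which is the one place a referee would ask you to write out the details.
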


\begin{proof}
We may scale the curve $C$ so that it has unit length and curvature bounded below by $k$.

First we prove (i). Consider a rectangle $R$ that circumscribes $C$ so that one of its sides is contained in the tangent line $L(z_2)$.  The boundary of the rectangle $R$ intersects $C$ in four points and splits $C$ into four components. We first assume that $z_1\neq z_2$ and $z_1,z_2$ lie in the closure of the \textit{same component} of $C\setminus \partial R$, which we denote by $C_1$. Then we claim that
\begin{align}\label{lemma:arc_tangent_adjacent}
\dist(z_1,L(z_2))\geq \frac{k}{\pi} \length(C_1|[z_1,z_2])^2. 
\end{align}

By rotating the curve $C$, we may assume that $L(z_2)$ is a horizontal tangent line and $C$ lies below $L(z_2)$. Hence, the rectangle $R$ has sides parallel to the coordinate axes  and the top side of $R$ is contained in $L(z_2)$. Moreover, without loss of generality we may assume that $z_1$ lies in the closure of the top right component of $C\setminus \partial R$.

We consider a unit speed orientation-preserving parametrization $\gamma\colon \T \to C$. 
The component $C_1$ is parametrized by $\gamma|{[a,b]}$ so that $\dot \gamma(a)=e^{i\pi/2}$ and $\dot \gamma(b)=e^{i\pi}$, where $a,b\in [0,1]$ and $a<b$ (after possibly reparametrizing by a translation). Note that $\gamma(b)=z_2$. We write $\dot \gamma(t)=e^{i\theta(t)}$, $t\in [a,b]$, where $\theta$ is a strictly increasing function, by the strict convexity of $C$, with 
$$\pi/2=\theta(a)\leq \theta(t)\leq \theta(b)=\pi.$$
Moreover, $\ddot \gamma(t)=e^{i\theta(t)} i\dot \theta(t)$, where $\dot\theta(t)\geq 0$ and by assumption we have $\dot \theta (t) \geq k$. 

We set $t_2=b$ and consider $t_1 \in [a,b)$ such that $\gamma(t_1)=z_1$. We have 
\begin{align*}
\dist(z_1,L(z_2)) = \im (z_2-z_1)= \int_{t_1}^{t_2} \sin\theta(t)dt = \int_{t_1}^{t_2} \sin(\theta(t_2)-\theta(t)) dt, 
\end{align*}
since $\theta(t_2)=\pi$. Note that $0\leq \theta(t_2)-\theta(t)\leq \pi/2$ for $t\in [t_1,t_2]$, so 
$$\sin(\theta(t_2)-\theta(t))\geq \frac{2}{\pi} (\theta(t_2)-\theta(t)) =\frac{2}{\pi} \int_{t}^{t_2} \dot \theta \geq \frac{2k}{\pi} (t_2-t).$$
Therefore,
\begin{align*}
\dist(z_1,L(z_2))\geq \frac{2k}{\pi} \int_{t_1}^{t_2} (t_2-t)dt= \frac{k}{\pi} (t_2-t_1)^2.
\end{align*}
Since $C_1$ is parametrized by arc-length we have $t_2-t_1= \length(C_1|[z_1,z_2])$ and this completes the proof of \eqref{lemma:arc_tangent_adjacent}.

Next, we treat the general case. Again, we consider a rectangle $R$ that circumscribes $C$ so that one of its sides, say the top side, is contained in $L(z_2)$.  Now, suppose that $z_1$ lies in one of the bottom components, say the bottom right one. We denote by $z_3$ the rightmost point of $C$, which is a point of intersection with $\partial R$. Then $$\dist(z_1,L(z_2))= \dist(z_3,L(z_2))+\im (z_3-z_1).$$ By \eqref{lemma:arc_tangent_adjacent}, we have $\dist(z_3, L(z_2)) \geq \frac{k}{\pi}\length(C|[z_3,z_2])^2$.  
Note that the $\im(z_3-z_1)$ is \textit{not} the distance of $z_1$ or $z_3$ to a tangent at $z_3$ or $z_1$, respectively, so we cannot apply \eqref{lemma:arc_tangent_adjacent} directly here. 

\begin{figure}
	\begin{overpic}[width=.5\linewidth]{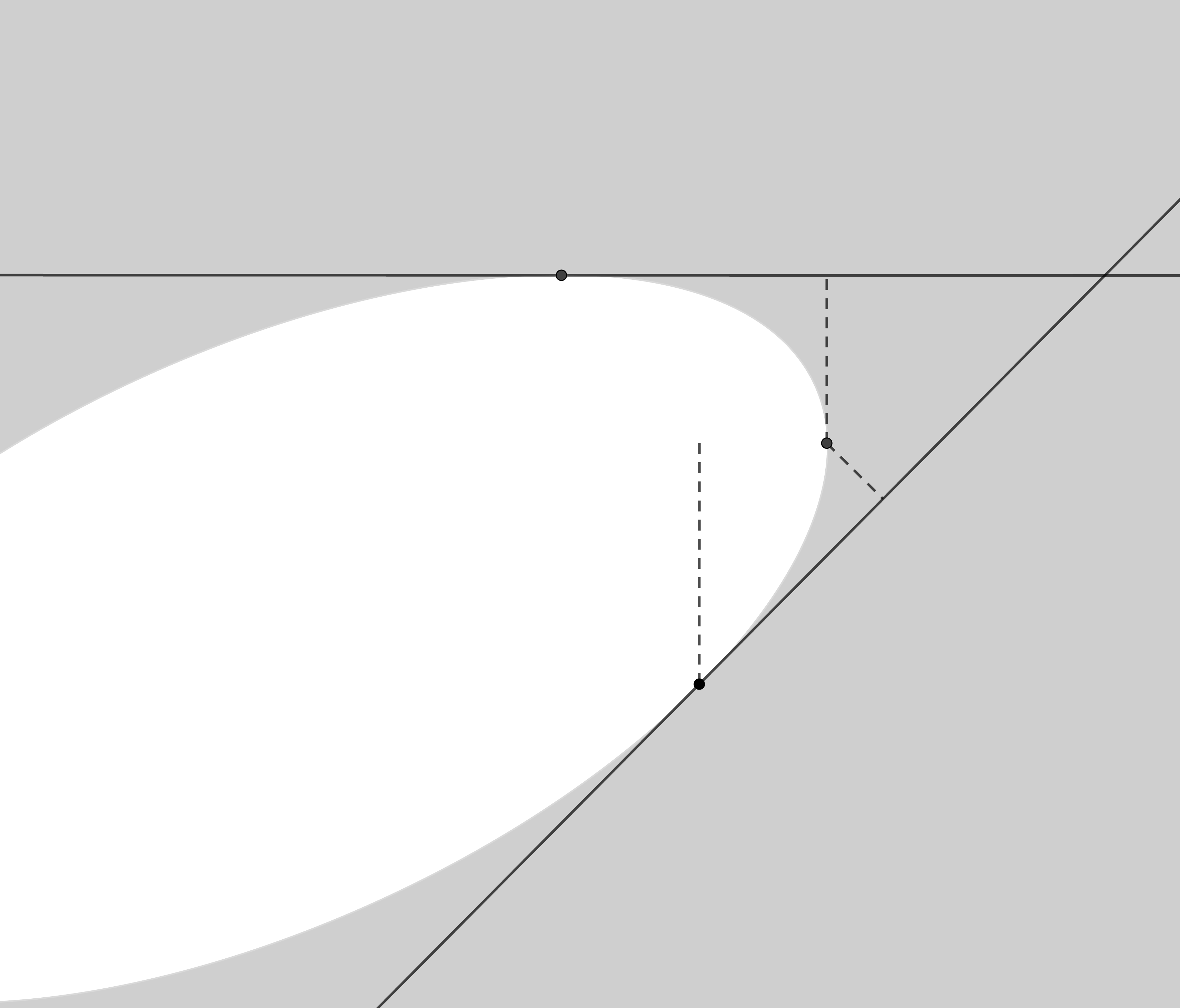}
		\put (47, 65) {$z_2$}
		\put (60, 24) {$z_1$}
		\put (72, 48) {$z_3$}
		\put (25, 65) {$L(z_2)$}
		\put (71, 35) {$L(z_1)$}
	\end{overpic}
	\caption{The distances $\im(z_3-z_1)$, $\dist(z_3,L(z_1))$, and $\dist(z_3,L(z_2))$, when $z_1,z_2$ do not lie in the closure of the same component of $C\setminus \partial R$.}\label{figure:distances}
\end{figure}

Consider the tangent line $L(z_1)$ at $z_1$. Then 
$$\im(z_3-z_1) \geq \dist( z_3, L(z_1)).$$
See Figure \ref{figure:distances}. This follows from the general fact that if $\zeta_3$ is any point in the first quadrant of the plane, then its distance to the line $y=\lambda x$, $ 0\leq \lambda\leq \arg(\zeta_3)$, is maximized when $\lambda=0$, in which case, the maximum distance is $\im(\zeta_3)$. Now consider a rectangle $R_1$ circumscribing $C$ so that one of the sides of $R_1$ is contained in $L(z_1)$. Then $z_1$ and $z_3$ lie in the closure of the same component of $C\setminus \partial R_1$. Hence, by applying \eqref{lemma:arc_tangent_adjacent}, we have 
$$\dist(z_3,L(z_1)) \geq \frac{k}{\pi}\length(C|[z_1,z_3])^2.$$
Summarizing, we have
\begin{align*}
\dist(z_1,L(z_2)) &\geq  \frac{k}{\pi} (\length(C|[z_3,z_2])^2 +\length(C|[z_1,z_3])^2)\\
&\geq  \frac{k}{2\pi} (\length(C|[z_3,z_2]) +\length(C|[z_1,z_3]))^2 \\
&\geq  \frac{k}{2\pi} (\length(C|[z_1,z_2])^2.
\end{align*}
This completes the proof of (i). 

For (ii), consider a rectangle $R$ that circumscribes $C$ and without loss of generality we suppose that its sides are parallel to the coordinate axes. Let the length of the horizontal sides be $a$ and of the vertical sides be $b$. It suffices to show that  $\min\{a,b\}\geq \frac{k}{8\pi}$.  If we remove from $R$ the closed region bounded by $C$, then we obtain four components $R_i$, $i\in \{1,\dots,4\}$, such that the boundary of $R_i$ consists of a horizontal segment $A_i$, a vertical segment $B_i$ and an arc $C_i\subset C$ to which $A_i$ and $B_i$ are tangent. By applying \eqref{lemma:arc_tangent_adjacent} twice, we have
\begin{align*}
\length(A_i) \geq  \frac{k}{\pi} \length(C_i)^2 \quad \textrm{and}\quad \length(B_i) \geq  \frac{k}{\pi} \length(C_i)^2.
\end{align*}
Combining these, we have 
\begin{align*}
2a &=\sum_{i=1}^4 \length(A_i) \geq \frac{k}{\pi} \sum_{i=1}^4\length(C_i)^2 
\geq \frac{k}{4\pi} \left(\sum_{i=1}^4 \length(C_i)\right)^2 = \frac{k}{4\pi},
\end{align*}
since $\length(C)=1$. Therefore, $a\geq \frac{k}{8\pi}$. The same estimate holds for $b$.
\end{proof}

\begin{theorem}\label{theorem:chordarc}
For each $k>0$ there exists $L>0$ such that if $C$ is a smooth Jordan curve  with curvature $\kappa\geq k/\length(C)$, then $C$ is an $L$-chord-arc curve. 
\end{theorem}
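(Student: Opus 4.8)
The plan is to rescale so that $\length(C)=1$ (hence $\kappa\ge k$), fix $a,b\in C$ with $a\ne b$, and bound $\ell:=\length(C|[a,b])\in(0,\tfrac12]$ against $|a-b|$ according to the total turning $\beta\in(0,2\pi)$ of the shorter arc $A:=C|[a,b]$ and the size of $\ell$. The workhorse is an elementary \emph{projection estimate}: for a convex arc whose unit tangent, in an arclength parametrization, has argument confined to an interval of length $\gamma<\pi$, one may rotate coordinates so that the argument lies in $[-\gamma/2,\gamma/2]$, and then the chord between the endpoints has length at least $\int\cos\varphi(s)\,ds$, which is at least $\cos(\gamma/2)$ times the length of the arc. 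Fix small parameters $\alpha_0=\alpha_0(k)\in(0,\pi/2)$ and $\ell_0=\ell_0(k)\in(0,\tfrac12)$, to be pinned down at the end. Three cases are easy. If $\ell\ge\ell_0$, then Lemma~\ref{lemma:arc_tangent}(i) with $z_1=a$, $z_2=b$ gives $|a-b|\ge\dist(a,L(b))\ge\frac{k}{2\pi}\ell^2\ge\frac{k\ell_0}{2\pi}\ell$. If $\beta\le\pi-\alpha_0$, the projection estimate applied to $A$ gives $|a-b|\ge\cos(\beta/2)\,\ell\ge\sin(\alpha_0/2)\,\ell$. If $\beta\ge\pi+\alpha_0$, the complementary arc $A'$ has turning $2\pi-\beta\le\pi-\alpha_0<\pi$ and length $1-\ell\ge\tfrac12$, so the projection estimate applied to $A'$ gives $|a-b|\ge\sin(\alpha_0/2)(1-\ell)\ge\tfrac12\sin(\alpha_0/2)$, whence $\ell\le\tfrac12\le\frac{1}{\sin(\alpha_0/2)}|a-b|$.

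The remaining case, $\ell<\ell_0$ together with $\beta\in(\pi-\alpha_0,\pi+\alpha_0)$, is the crux, and I expect it to be the main obstacle: the projection estimate degenerates (the tangent at $b$ is nearly antiparallel to that at $a$), the quadratic bound of Lemma~\ref{lemma:arc_tangent}(i) is far too weak, and in fact a \emph{global} argument is needed — there do exist convex arcs of turning $\pi$ and arbitrarily small length whose endpoints are far closer than the length (thin ``hairpins''), and it is only the structure of the whole curve that excludes them. The idea is to show that such a configuration forces the convex body $\bar D$ bounded by $C$ into an excessively thin strip, contradicting Lemma~\ref{lemma:arc_tangent}(ii). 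Translate so $a=0$ and rotate so the outward unit normal of $C$ at $a$ is $(0,1)$; then $\bar D\subseteq\{y\le0\}$ and $\max_{\bar D}y=0$. Since the outward normal is the unit tangent rotated by a fixed angle, it also turns by $\beta$ along $A$, so at $b$ it equals $(\pm\sin\beta,\cos\beta)$ (sign depending on orientation) with $\cos\beta\le-\cos\alpha_0<0$ and $|\sin\beta|\le\sin\alpha_0$; the supporting half-plane of $\bar D$ at $b$ then forces $y\ge b_2-\tan\alpha_0\,|x-b_1|$ for all $(x,y)\in\bar D$. Using $|b|=|b-a|\le\ell<\ell_0$ and $\diam(\bar D)\le\length(C)/2=\tfrac12$, this gives $\min_{\bar D}y\ge-\ell_0-\tan\alpha_0(\tfrac12+\ell_0)$, so the projection of $C$ onto the $y$-axis has length at most $\ell_0+\tan\alpha_0(\tfrac12+\ell_0)$. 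Choosing, say, $\ell_0=\frac{k}{32\pi}$ and $\tan\alpha_0=\frac{k}{16\pi}$ makes this quantity $<\frac{k}{8\pi}$, contradicting Lemma~\ref{lemma:arc_tangent}(ii); hence this case does not occur.

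Combining the cases yields $\ell\le L|a-b|$ with $L=\max\bigl(\tfrac{1}{\sin(\alpha_0/2)},\,\tfrac{2\pi}{k\ell_0}\bigr)$, a constant depending only on $k$, so $C$ is $L$-chord-arc. The remaining points are routine: that $\beta$ is well defined in $(0,2\pi)$ and $A'$ has turning $2\pi-\beta$ (monotonicity of the tangent for a convex curve together with total turning $2\pi$); that $\diam(\bar D)\le\tfrac12$ for a unit-length convex curve; that $\dist(a,L(b))\le|a-b|$ since $b\in L(b)$; and the trigonometric inequality $\cos(\beta/2)\ge\sin(\alpha_0/2)$ when $\beta\le\pi-\alpha_0$. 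The substantive content is the case split and the realization that the near-$\pi$-turning regime must be excluded globally, via Lemma~\ref{lemma:arc_tangent}(ii), rather than by any local estimate.
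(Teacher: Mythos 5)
Your proof is correct and follows essentially the same strategy as the paper's: normalize to unit length, split on the total turning of the arc, use the cosine/projection estimate when the turning is bounded away from $\pi$, and invoke the width bound of Lemma \ref{lemma:arc_tangent}\,(ii) to dispose of the near-hairpin regime, which is indeed the crux. The only real difference is the mechanics of that hard case: the paper obtains a direct lower bound $|z_1-z_2|\geq k/(16\pi)$ by comparing $z_2$ with the point $z_3$ whose tangent is antiparallel to the tangent at $z_1$ (using the curvature bound to control $|z_2-z_3|$), whereas you trap $\overline{D}$ between the supporting half-planes at $a$ and $b$ and contradict the width bound directly; both arguments rest on the same lemma and yield comparable constants.
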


\begin{remark}\label{remark:lune}
The conclusion of the theorem does not hold if we merely assume that $\kappa\geq k$, without including the length term. As an example, consider two circles $C_1,C_2$ of length $2\pi$ and curvature $1$ that intersect. Let $C$ be the curve bounding the convex lune formed by the intersection of $C_1$ and $C_2$. One can smoothen $C$ so that it has no corners and curvature bounded below by $1$. However, if $C_1$ and $C_2$ tend to be (externally) tangent to each other, forcing $C$ to have small length, then $C$ cannot be a chord-arc curve with a uniform constant; see Figure \ref{figure:lune}.
\end{remark}

\begin{figure}
	\begin{overpic}[width=.75\linewidth]{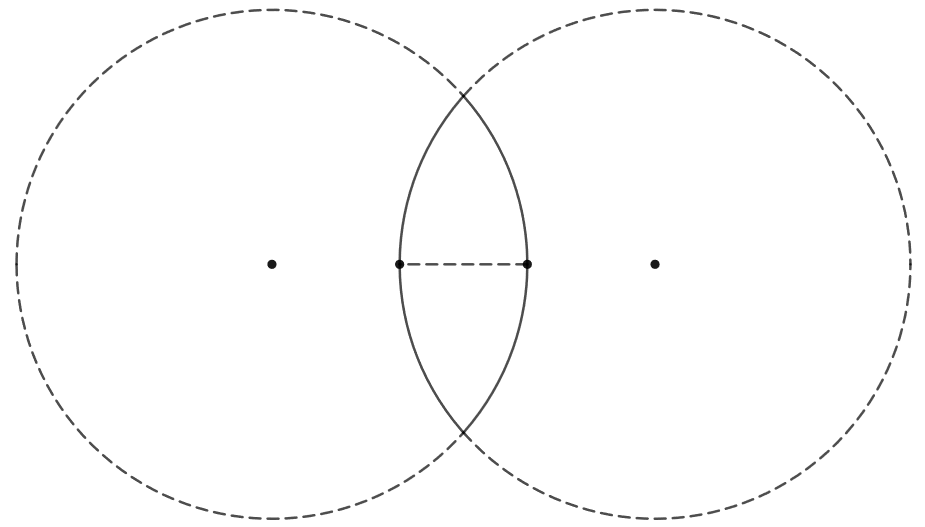}
		\put (30,30) {$(0,0)$}
		\put (72,30) {$(c,0)$}
		\put (58,26) {$p$}
		\put (44,26) {$q$}
	\end{overpic}
	\caption{Two circles $C_1, C_2$ of radius $1$ and the lune $C$ that results from their intersection. We have $p=(1,0)$,  $q=(c-1,0)$, and $|p-q|=2-c$. The arc-length between the two points is $2\arccos(c/2)$. Since $2\arccos(c/2)/(2-c)\to\infty$ as $c\to2^-$, there is no uniform chord-arc constant for this family of curves, although the curvature is bounded below.}\label{figure:lune}
\end{figure}

\begin{proof}
It suffices to prove the statement under the assumption that $C$ has unit length and curvature bounded below by $k$.

Suppose that $\gamma$ is a unit speed orientation-preserving parametrization of $C$. Let $z_1,z_2\in C$, $z_1\neq z_2$, and $t_1,t_2\in \T$ be such that $\gamma(t_1)=z_1$ and $\gamma(t_2)=z_2$. By reversing the roles of $t_1$ and $t_2$ if necessary, we may write  $\dot \gamma(t_1)= e^{i\theta_1}$ and $\dot \gamma(t_2)=e^{i\theta_2}$, where $0< \theta_2-\theta_1\leq \pi$. We reparametrize $\gamma$ by composing it with a translation of $\T$ so that $t_1=0$ and $t_2\in[0,1]$ with $t_1<t_2$. By the strict convexity of $C$, there exists a strictly increasing function $\theta(t)$, $0\leq t\leq 1$, such that $\dot \gamma(t)= e^{i\theta(t)}$ for $0\leq t\leq 1$ and $\theta(t_i)=\theta_i$, $i=1,2$. Note that $\dot\theta \geq k$, since $|\ddot \gamma|\geq k$. Finally, by rotating the curve $\gamma$ with a rigid motion of the plane, we may have that $\theta(t_1)=-\alpha$ and $\theta(t_2)=\alpha$, where $0< \alpha\leq \pi/2$.

We have
\begin{align*}
\left|\int_{t_1}^{t_2} e^{i\theta(s)}\, ds \right| &\geq \int_{t_1}^{t_2} \cos\theta(s) \, ds \geq (t_2-t_1) \cos \alpha \geq \length(C|[z_1,z_2]) \cos\alpha.
\end{align*}
Hence, if $\pi/2 -\alpha\geq \delta$, where $\delta\in (0,\pi/4]$ is to be chosen, then 
\begin{align*}
|z_1-z_2|\geq  \length(C|[z_1,z_2]) \cos(\pi/2-\delta).
\end{align*}

Now, suppose $\pi/2-\alpha<\delta$. We consider the point $z_3=\gamma(t_3)\in C$, where $t_3\in (0,1)$ and $\theta(t_3)=\theta(t_1)+\pi= \pi-\alpha$. We have
\begin{align*}
|z_1-z_2|\geq |z_3-z_1|-|z_3-z_2|. 
\end{align*}
The  tangent lines at $z_1$ and $z_3$ are parallel to each other. We consider the projection of the curve $C$ to a line that is perpendicular to these lines. By Lemma \ref{lemma:arc_tangent} (ii), this projection has length at least $\frac{k}{8\pi}$. Hence, $|z_3-z_1|\geq \frac{k}{8\pi}$. On the other hand,
\begin{align*}
|z_3-z_2| \leq t_3-t_2  \leq \frac{1}{k} \int_{t_2}^{t_3} \dot \theta =\frac{2}{k} (\pi/2- \alpha) <\frac{2}{k} \delta.
\end{align*}
Therefore, if we choose $\delta \leq \frac{ k^2}{32\pi}$, we have 
$$|z_1-z_2|\geq \frac{k}{8\pi}- \frac{2}{k} \frac{ k^2}{32 \pi}=\frac{k}{16\pi}\geq \frac{k}{16\pi } \length(C|[z_1,z_2]),$$
since $\length(C)=1$. 

Summarizing, if we choose $\delta =\min\{  \frac{k^2}{32\pi},\frac{\pi}{4}\}$, then  $C$ is an $L$-chord-arc curve with $L=\min\{ \cos(\pi/2-\delta), \frac{k}{16\pi} \}.$
\end{proof}

\bigskip

\subsection{Ahlfors $2$-regular regions}\label{section:ahlfors}
A Jordan region $D\subset \C$ is \textit{$M$-\textit{Ahlfors $2$-regular}} if for each $p\in \br{D}$ and for each $r\leq \diam(D)$ we have
\begin{align*}
\Area(B(p,r)\cap D) \geq M r^2.
\end{align*} 
This property is scale invariant, i.e., if $D$ is $M$-Ahlfors $2$-regular, then any scaled copy of $D$ is also $M$-Ahlfors $2$-regular. Examples of Ahlfors $2$-regular regions include disks and squares. On the other hand, regions with outward pointing cusps are not Ahlfors $2$-regular, and the lunes in Figure \ref{figure:lune} are not Ahlfors $2$-regular with uniform constants as $\diam(C)\to 0$. Ahlfors $2$-regular regions are very useful for counting arguments, such as in Lemma \ref{lemma:diameters_to_zero} below. 

\begin{lemma}\label{lemma:chordarc_area}
Let $C$ be an $L$-chord-arc curve and $D$ be the region enclosed by $C$. Then there exists a constant $M>0$ depending only on $L$ such that $D$ is $M$-Ahlfors $2$-regular. 
\end{lemma}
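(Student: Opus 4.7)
The plan is to establish the stronger \emph{interior corkscrew condition}: there exists $c = c(L) > 0$ such that for every $p \in \br{D}$ and every $r \leq \diam(D)$, the intersection $B(p, r) \cap D$ contains a ball of radius $cr$. Ahlfors $2$-regularity then follows immediately with $M = \pi c^2$.

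First I would reduce to the boundary case $p \in C$: if $\dist(p, C) \geq r/2$ then $B(p, r/2) \subset D$ is itself the desired ball, while otherwise any $p' \in C$ with $|p - p'| < r/2$ satisfies $B(p', r/2) \cap D \subset B(p, r) \cap D$, so a corkscrew ball at $p'$ with radius $r/2$ transfers to one at $p$ with a factor-$2$ loss. A further scaling reduction restricts to the regime $r \leq \length(C)/(4L)$: for larger $r$ one invokes the small-radius conclusion at $r_0 = \length(C)/(4L)$, which is comparable to $\diam(D)$ since $2\diam(D) \leq \length(C) \leq 2L\diam(D)$ (chord-arc applied to antipodal points). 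For such $p$ and $r$, I consider the connected component $A$ of $C \cap \br{B(p,r)}$ containing $p$, with endpoints $q^\pm \in \partial B(p, r)$ at distance $r$ from $p$. For $r < \length(C)/(4L)$ one checks that $A$ must be the shorter of the two arcs of $C$ joining $q^\pm$ (otherwise a pair of points in $A$ at parametric distance slightly greater than $\length(C)/2$ would contradict the chord-arc inequality applied to their shorter connecting arc). Hence the chord-arc inequality gives
\begin{align*}
2r \;=\; |p-q^+| + |p-q^-| \;\leq\; \length(A) \;\leq\; L\,|q^+ - q^-|,
\end{align*}
yielding $|q^+ - q^-| \geq 2r/L$ and $\length(A) \leq 2Lr$.

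The chord $[q^-, q^+]$ cuts $B(p, r)$ into two circular segments, each of area at least $c_1(L) r^2$ (computable directly from chord length $\geq 2r/L$). Since $\length(A) \leq 2Lr$, the $\rho$-tubular neighborhood of $A$ has area at most $4Lr\rho + \pi \rho^2$; for $\rho = c_2(L) r$ with $c_2$ small this is less than half of $c_1(L) r^2$, so each segment must contain a point $z$ with $\dist(z, A) \geq c_2 r$. The ball $B(z, c_2 r)$ is then disjoint from $A$ and lies in a single component of $\R^2 \setminus A$. Because $A \subset C$ locally separates $D$ from $\R^2 \setminus \br{D}$ at each of its points, the two candidate balls (one per segment) land on opposite sides of $A$, and precisely one of them lies in $D$; this is the desired corkscrew ball.

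The main obstacle is that $C \cap B(p, r)$ may contain arcs of $C$ other than $A$, so even though $B(z, c_2 r)$ avoids $A$, it could still be crossed by such a parasitic arc and thus exit $D$. I would address this by a pigeonhole argument across a dyadic sequence of sub-radii $r, r/2, \ldots, r/2^N$ with $N = N(L)$: bounded turning (an immediate consequence of chord-arc, with constant $L$) constrains each parasitic component of $C \cap \br{B(p,r/2^j)}$ to have diameter comparable to the chord joining its endpoints on $\partial B(p, r/2^j)$, and a length/area comparison then shows that at some scale $r/2^j$ the parasitic arcs have total area too small to obstruct the construction. The resulting corkscrew ball has radius a constant fraction of $r/2^j$ and hence of $r$, yielding the final constant $M = M(L)$.
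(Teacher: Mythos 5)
The paper's own proof of this lemma is a two-line citation: an $L$-chord-arc curve is in particular an $L$-quasicircle, and quasidisks are Ahlfors $2$-regular quantitatively by \cite[Corollary 2.3]{Schramm:transboundary}. Your self-contained corkscrew argument is therefore a genuinely different and more ambitious route, and most of its ingredients are sound: the reduction to boundary points and small scales, the bounds $|q^+-q^-|\geq 2r/L$ and $\length(A)\leq 2Lr$, and the area comparison producing, in each circular segment cut off by $[q^-,q^+]$, a point $z$ with $\dist(z,A)\geq c_2 r$. The decisive step, however, has a real gap: the claim that ``the two candidate balls land on opposite sides of $A$, and precisely one of them lies in $D$'' is unjustified. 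Lying in different circular segments means lying on different sides of the \emph{chord} $[q^-,q^+]$, not on different sides of the arc $A$ (let alone of $C$): since $\length(A)$ may be as large as $2Lr$, the arc $A$ can cross the chord many times, and the two deep points you select may lie in the same component of $B(p,r)\setminus A$, hence on the same side of $C$ --- possibly the exterior side. Concretely, if $D\cap B(p,r)$ is a ``comb'' of roughly $L$ teeth of width comparable to $r/L$ (which is compatible with the $L$-chord-arc condition), then a point at depth $c_2 r\ll r/L$ chosen arbitrarily in each segment can land in an exterior gap in both segments, and your construction produces no ball inside $D$ at all, even though such balls exist. No area count can break this symmetry, because the only asymmetry between $D$ and $\R^2\setminus\br{D}$ inside $B(p,r)$ is exactly the fatness you are trying to prove; a correct argument must identify the $D$-side component of $B(p,r)\setminus A$ (for instance via the arc of $\partial B(p,r)$ contained in its boundary) and exhibit a deep point in that specific component.

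A secondary issue is the treatment of parasitic arcs, which as written cannot be checked: arcs have zero area, and the dyadic pigeonhole is only gestured at. This part, at least, is repairable without new ideas: the chord-arc condition forces every point of $C\cap\br{B(p,r)}$ to lie within arclength $2Lr$ of $p$ along $C$ (its chord to $p$ has length at most $2r$), so for $r<\length(C)/(4L)$ the entire intersection $C\cap\br{B(p,r)}$ is contained in a single subarc of $C$ of length at most $4Lr$ containing $p$, and one can run the tubular-neighborhood estimate against that subarc once and for all rather than scale by scale. But the sidedness gap above is the heart of the matter and is what the quasidisk machinery invoked by the paper is doing for you.
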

An $L$-chord-arc curve $C$ is a $L$-\textit{quasicircle}. That is, for any two points $a,b\in C$ we have $\diam(C|[a,b]) \leq L|a-b|$. The region $D$ enclosed by a quasicircle is called a \textit{quasidisk}, by definition. It was proved in  \cite[Corollary 2.3]{Schramm:transboundary} that quasidisks are Ahlfors  $2$-regular, {quantitatively}. This proves Lemma \ref{lemma:chordarc_area}.

\begin{lemma}\label{lemma:diameters_to_zero}
Let $\{D_i\}_{i\in \N}$ be a collection of disjoint, $M$-Ahlfors $2$-regular Jordan regions. Then, the following statements are true.
\begin{enumerate}[\upshape(i)]
\item Consider concentric balls $ B(x,r)$ and $ B(x,R)$, where $r<R$. For each $i\in \N$, if $\br{D_i}$ intersects $\partial B(x,r)$ and $\partial B(x,R)$, then 
$$\Area(D_i\cap B(x,R)\setminus B(x,r) )\geq c (R-r)^2,$$
where $c>0$ is a constant depending only on $M$. 
\item Let $E$ be a compact set. Then for each $\varepsilon>0$, the set $$\{i: D_i\cap E\neq \emptyset \,\, \textrm{and} \,\, \diam(D_i)>\varepsilon\}$$is finite.  
\item Let $E$ be a compact set. There exists a constant $K>0$ depending only on  $M$, such that for each $c>0$ the set 
$$\{i: D_i\cap E\neq \emptyset \,\, \textrm{and}\,\, \diam(D_i)\geq c \diam(E)\}$$
has at most $K(c^{-2}+1)$ elements.
\end{enumerate}
\end{lemma}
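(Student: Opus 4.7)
\textbf{Proof plan for Lemma \ref{lemma:diameters_to_zero}.} For part (i), the plan is to exploit the connectedness of $\overline{D_i}$ to find a radially central point. Since $\overline{D_i}$ is a closed Jordan region, hence connected, and the continuous function $y\mapsto |y-x|$ takes values $r$ and $R$ on it, by the intermediate value theorem there is a point $p\in \overline{D_i}$ with $|p-x|=(r+R)/2$. The triangle inequality then forces the ball $B(p,(R-r)/2)$ to lie entirely inside the open annulus $B(x,R)\setminus \overline{B(x,r)}$. Moreover, picking any two intersection points of $\overline{D_i}$ with $\partial B(x,r)$ and $\partial B(x,R)$, the triangle inequality gives $\diam(D_i)\geq R-r\geq (R-r)/2$, so Ahlfors $2$-regularity applies at $p$ with radius $(R-r)/2$ and yields
\[
\Area\bigl(D_i\cap B(p,(R-r)/2)\bigr)\geq M\left(\frac{R-r}{2}\right)^2.
\]
Since the left-hand side set lies in $D_i\cap B(x,R)\setminus B(x,r)$, this proves (i) with $c=M/4$.

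For part (ii), I would run a disjoint-area packing argument. For each index $i$ in the set under consideration, pick $p_i\in D_i\cap E$, and apply Ahlfors $2$-regularity at $p_i$ with radius $\varepsilon$ (allowed since $\varepsilon<\diam(D_i)$) to get $\Area(D_i\cap B(p_i,\varepsilon))\geq M\varepsilon^2$. These sets are pairwise disjoint and all contained in the compact $\varepsilon$-neighborhood of $E$, whose Lebesgue area is finite. Thus the number of indices is bounded above by this area divided by $M\varepsilon^2$, proving finiteness.

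Part (iii) proceeds by the same idea with the radius scaled to the problem. Fix any $x_0\in E$ and for each valid $i$ pick $p_i\in D_i\cap E$. Because $c\diam(E)\leq \diam(D_i)$, Ahlfors $2$-regularity gives
\[
\Area\bigl(D_i\cap B(p_i,c\diam(E))\bigr)\geq M c^2\diam(E)^2.
\]
Each ball $B(p_i,c\diam(E))$ is contained in $B(x_0,(1+c)\diam(E))$, which has area $\pi(1+c)^2\diam(E)^2$. Disjointness of the $D_i$ bounds the cardinality of the index set by
\[
\frac{\pi(1+c)^2}{Mc^2}\leq \frac{2\pi}{M}\bigl(c^{-2}+1\bigr),
\]
using the elementary inequality $(1+c)^2\leq 2(1+c^2)$. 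So $K=2\pi/M$ suffices.

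The only step that requires any real thought is part (i), where one must locate an \emph{interior} radial point of $\overline{D_i}$ to have room to apply the Ahlfors lower bound inside the annulus; parts (ii) and (iii) are then straightforward counting once one chooses the right radius at which to read off the area lower bound.
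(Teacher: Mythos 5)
Your proofs of all three parts are correct. Part (i): the intermediate value theorem on the connected set $\br{D_i}$ does produce a point $p$ with $|p-x|=(r+R)/2$, the ball $B(p,(R-r)/2)$ does sit inside the annulus, and the diameter bound $\diam(D_i)\geq R-r$ legitimizes the Ahlfors lower bound at radius $(R-r)/2$. Parts (ii) and (iii) are clean disjoint-area counting arguments, and the inequality $(1+c)^2\leq 2(1+c^2)$ correctly massages your bound into the stated form $K(c^{-2}+1)$.

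For comparison: the paper does not prove (i) and (ii) at all but cites them from an earlier reference, so your self-contained arguments there are extra content (and they match the standard proofs one would expect). For (iii), the paper takes a genuinely different, two-case route: it first isolates the ``very large'' regions with $\diam(D_i)\geq 4\diam(E)$, notes that each such $\br{D_i}$ must cross the annulus $B(x,2r)\setminus B(x,r)$ with $r=\diam(E)$, and invokes part (i) to get a per-region area contribution $\geq c'r^2$ there; it then handles the remaining regions with $c\diam(E)\leq\diam(D_i)<4\diam(E)$ by the containment-in-a-ball argument you use. Your single unified argument is simpler and avoids any appeal to part (i); the price is that the constant is absorbed through the elementary inequality $(1+c)^2\leq 2(1+c^2)$ rather than through a case split, which is a perfectly fair trade. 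Both yield a constant $K$ depending only on $M$, which is all the application requires.
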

A proof of the first two parts of this lemma can be found in \cite{Ntalampekos:CarpetsThesis}. In particular, part (i) is proved in \cite[Remark 2.3.5]{Ntalampekos:CarpetsThesis} and part (ii) is proved in \cite[Lemma 2.3.4]{Ntalampekos:CarpetsThesis}.

\begin{proof}[Proof of $\mathrm{(iii)}$]
First, we bound the cardinality of the set 
$$I_4=\{i: D_i\cap E\neq \emptyset \,\, \textrm{and}\,\, \diam(D_i)\geq 4 \diam(E)\}.$$
Let $x\in E$, $r=\diam(E)$, and consider the ball $B(x,r)$, which contains $E$. If $D_i\cap E\neq \emptyset$ and $\diam(D_i)\geq 4\diam(E)$, then $\br{D_i}$ intersects $\partial B(x,r)$ and $\partial B(x,2r)$. By part (i) we have
$$\Area(D_i\cap (B(x,2r)\setminus B(x,r)))\geq c'r^2,$$
where $c'$ depends only on $M$. We have
\begin{align*}
4\pi r^2=\Area(B(x,2r)) \geq \sum_{i\in I_4}\Area(D_i\cap (B(x,2r)\setminus B(x,r))) \geq c'r^2 \cdot \# I_4,
\end{align*}  
where $\# I_4$ denotes the cardinality of $I_4$. This proves that $\# I_4 \leq 4\pi/c'$, which depends only on $M$. If $c\geq 4$, then the desired statement is proved, provided that we choose $K\geq 4\pi/c'$. 

Next, if $c<4$, it suffices to find a bound for the cardinality of the set 
$$I_c=\{i: D_i\cap E\neq \emptyset \,\, \textrm{and}\,\, c\diam(E)\leq \diam(D_i)<4 \diam(E)\}.$$
Let $x\in E$ and $R=5\diam (E)$. Then $B(x,R)$ contains $D_i$ whenever $i\in I_c$. Since each $D_i$ is $M$-Ahlfors $2$-regular, we have
\begin{align*}
\pi R^2=\Area(B(x,R)) \geq \sum_{i\in I_c} \Area(D_i) \geq M \diam(D_i)^2 \# I_c\geq Mc^2 \diam(E)^2\cdot \#I_c.
\end{align*}
This proves that $\#I_c\leq 25\pi M^{-1}c^{-2}$. Thus, if we choose $K=\max\{ 4\pi/c', 25\pi M^{-1}\}$, which depends only on $M$, then we have $\# I_c \leq Kc^{-2}$ and
\begin{align*}
\#\{i: D_i\cap E\neq \emptyset \,\, \textrm{and}\,\, \diam(D_i)\geq c\diam(E)\} \leq Kc^{-2}+K.
\end{align*}
This completes the proof.
\end{proof}

\begin{lemma}\label{lemma:segment_intersection}
Let $\{D_i\}_{i\in \N}$ be a collection of disjoint, $M$-Ahlfors $2$-regular Jordan regions and let $E$ denote the line segment $[0,l]\times \{0\}$, where $l>0$. Suppose that there exists a constant $c>0$ such that for each $i\in \N$ we have 
$$\diam(D_i)\leq cl \quad \textrm{and}\quad \dist(D_i,E) \leq c\diam(D_i).$$ 
Then there exists a constant $K>0$ that depends only on $c$ and $M$ such that for each $s\in(1,2]$ we have
\begin{align*}
\sum_{i=1}^\infty \diam(D_i)^s \leq \frac{K\cdot  l^s}{s-1}.
\end{align*}
\end{lemma}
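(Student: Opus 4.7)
The plan is a standard dyadic decomposition of the regions $D_i$ according to their diameter. For each integer $n\geq 0$, let
$$I_n=\{i\in\N : 2^{-n-1}cl < \diam(D_i)\leq 2^{-n}cl\}.$$
Since $\diam(D_i)\leq cl$ for every $i$, every index belongs to some $I_n$. The strategy is to show $|I_n|\leq C'\cdot 2^n$ for a constant $C'$ depending only on $c$ and $M$, and then sum a geometric series whose blow-up as $s\to 1^+$ produces the factor $1/(s-1)$.

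First I would localize each $D_i\in I_n$ in a thin strip around $E$. The hypothesis $\dist(D_i,E)\leq c\diam(D_i)$ combined with $\diam(D_i)\leq 2^{-n}cl$ forces $D_i$ to lie in the strip $S_n$ of points within distance $(c+1)cl\cdot 2^{-n}$ of $E$. Moreover, all $D_i$ (for all $n$) lie in the $c(c+1)l$-neighborhood $B$ of $E$, which is a rectangle of area $O_c(l^2)$. Hence $S_n\cap B$ has area at most $C_c\, l^2\, 2^{-n}$, with $C_c$ depending only on $c$.

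Next, Ahlfors $2$-regularity applied with $r=\diam(D_i)$ gives $\Area(D_i)\geq M\diam(D_i)^2\geq Mc^2l^2\cdot 2^{-2n-2}$ for each $i\in I_n$. Since the $D_i$ are pairwise disjoint and all contained in $S_n\cap B$,
$$|I_n|\cdot Mc^2l^2\cdot 2^{-2n-2}\leq \Area(S_n\cap B)\leq C_c\, l^2\, 2^{-n},$$
yielding $|I_n|\leq C'\cdot 2^n$ with $C'$ depending only on $c$ and $M$.

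Finally I would estimate the sum for $s\in(1,2]$:
$$\sum_{i=1}^\infty \diam(D_i)^s \leq \sum_{n=0}^\infty |I_n|\,(2^{-n}cl)^s \leq C'\,c^s\, l^s \sum_{n=0}^\infty 2^{n(1-s)} = \frac{C'c^s l^s}{1-2^{1-s}}.$$
The function $(1-2^{1-s})/(s-1)$ extends continuously to $s=1$ with value $\log 2$ and remains bounded below by a positive constant on $(1,2]$; hence $1/(1-2^{1-s})\leq C''/(s-1)$. Combining these estimates yields the claimed bound with $K$ depending only on $c$ and $M$.

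There is no genuinely hard step in this argument. The only mild subtlety is making sure that the strip $S_n$ is intersected with a bounded set of area $O_c(l^2)$, rather than being treated as an infinite horizontal strip; this is immediate from the two hypotheses $\diam(D_i)\leq cl$ and $\dist(D_i,E)\leq c\diam(D_i)$, which together confine every $D_i$ to a $c$-dependent neighborhood of $E$.
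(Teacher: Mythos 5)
Your argument is correct and follows essentially the same route as the paper's proof: a dyadic decomposition by diameter, an area-counting bound via Ahlfors $2$-regularity and confinement to a thin neighborhood of $E$, and a geometric series producing the $1/(s-1)$ factor. The paper phrases the decomposition with scales $I(r)$ for $r\in(0,1/2]$ and uses the concavity bound $1-2^{-(s-1)}\geq (s-1)/2$, but these are only cosmetic differences from your version.
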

See Figure \ref{figure:segment_intersection} for an illustration of the assumptions of the lemma.

\begin{figure}
	\begin{overpic}[width=.75\linewidth]{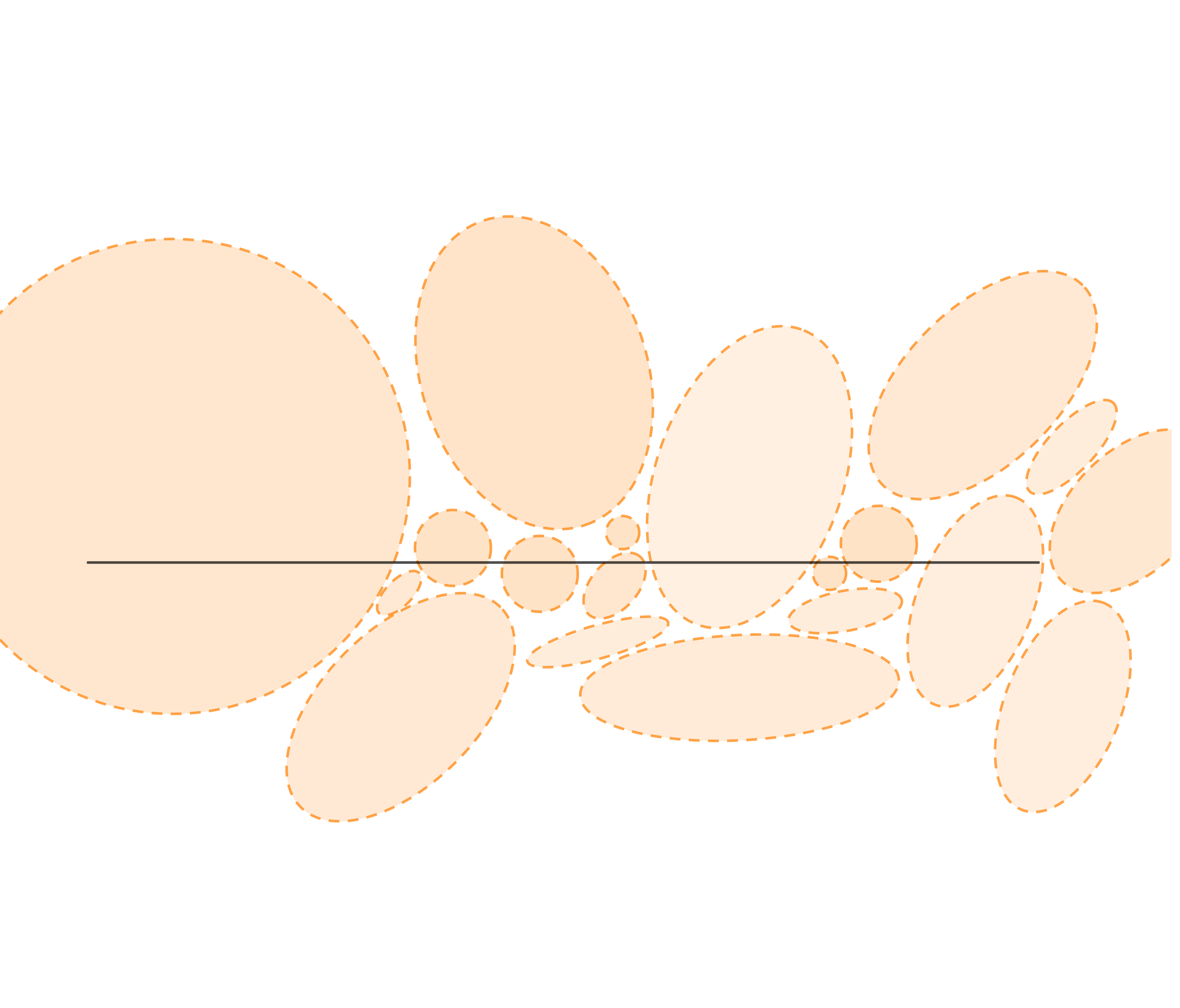}
		\put (10,34) {$E$}
	\end{overpic}
	\caption{The diameter of each $D_i$ is not much larger than the length of $E$ and the relative distance of $D_i$ to $E$ is bounded.}\label{figure:segment_intersection}
\end{figure}

\begin{proof}
By scaling the segment $E$ and the regions $D_i$, we may assume that $l=1$. Note that scaling does not affect the Ahlfors regularity constant $M$.

The assumption that $\dist(D_i,E) \leq c\diam(D_i)$ implies that the region $D_i$ is contained in the open $(1+c)\diam(D_i)$-neighborhood of $E$. For each $r>0$ we consider the family
$$I(r)=\{i\in \N:  cr< \diam(D_i)\leq  2cr\}.$$
We observe that $\bigcup_{r\in (0,1/2]} I(r)=\N$. Note that if $i\in I(r)$, then $D_i$ is contained in the open $(1+c)2cr$-neighborhood of $E$. Hence, there exists a constant $c_1>0$ depending only on $c$ such that for $r\in (0,1/2]$ we have
\begin{align*}
c_1r\geq  \area\left( \bigcup_{i\in I(r)} D_i \right)= \sum_{i\in I(r)} \area(D_i) \geq   \# I(r) \cdot M \cdot (cr)^2.
\end{align*}
It follows that for $r\in (0,1/2]$ we have $\#I(r) \leq c_2/r$ for a constant $c_2>0$ depending only on $c$ and $M$. Now for $s\in (1,2]$ we have
\begin{align*}
\sum_{i=1}^\infty \diam(D_i)^s =\sum_{k=1}^\infty \sum_{i\in I(2^{-k})} \diam(D_i)^s \leq c_2\sum_{k=1}^\infty 2^{-(k-1)s}\cdot c^s 2^k \leq \frac{c_3 }{1-2^{-(s-1)}}.
\end{align*}
where $c_3$ is a positive constant depending only on $c$ and $M$. Note that $1-2^{-(s-1)}\geq (s-1)/2$ for all $s\in [1,2]$ by concavity. The proof is complete.
\end{proof}

\bigskip

\section{Proof of main theorem}\label{section:proof_main}
Our proof follows the main steps of \cite{Larman:DimensionPackings}, but we have to adapt these steps to our generalized setting. In fact our first few considerations are very similar to the referenced result, but we have to deal with some extra complications related to the locality and generality of our result.   We suppose that $\mathcal P_\Omega= \{D_i\}_{i\in \N}$ is a packing such that for each $i\in \N$ the curve $\partial D_i$ has curvature bounded below by $k/\length(\partial D_i)$. By Theorem \ref{theorem:chordarc} all curves $\partial D_i$ are $L$-chord-arc curves, where $L$ depends only on $k$. Moreover, Lemma \ref{lemma:chordarc_area} implies that all regions $D_i$ are $M$-Ahlfors $2$-regular, where $M$ depends only on $k$. We split the proof in several subsections for the convenience of the reader. In Section \ref{section:initial_setup} we first make some reductions.

\bigskip

\subsection{Initial setup and definition of the function $g_m$}\label{section:initial_setup}
We first note that the residual set $\mathcal S$ is non-empty. To see this, we note that if $\partial D_1=\partial \Omega$, then $D_1=\Omega$, so the region $D_2$, which is contained in $\Omega$, intersects $D_1$, a contradiction. Hence, $ \partial D_1\cap \Omega\neq \emptyset$ and $\mathcal S\neq \emptyset$. 

Consider an open set $U\subset \Omega$ such that $U\cap \mathcal S\neq \emptyset$. Our goal is to show that
$$\dimh(U\cap \mathcal S)>K,$$
for a suitable constant $K>1$, depending only on $k$. 

\begin{claim}\label{claim:boundary_infinitely_many}
If $U$ is an open subset of $\Omega$, $U\cap \mathcal S\neq \emptyset$, and $U$ \textit{intersects} at most finitely many regions $D_i$, $i\in \N$, then $U\cap \mathcal S$ has non-empty interior.
\end{claim}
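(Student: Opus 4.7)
The plan is to use the finiteness assumption to turn $U\cap\mathcal S$ into a concrete set. Since $D_{i_1},\ldots,D_{i_n}$ denote the finitely many regions of $\mathcal P_\Omega$ that meet $U$, we have $U\cap\mathcal S=U\setminus\bigcup_{j=1}^n D_{i_j}$, a relatively closed subset of $U$. I will pick any $p\in U\cap\mathcal S$, zoom in to a small ball $B\subset U$ around $p$, and exhibit a non-empty open subset of $B$ disjoint from every $D_{i_j}$.

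The first key step is to control how many of the curves $\partial D_{i_j}$ can pass through $p$. Let $J=\{j:p\in\partial D_{i_j}\}$. Since every $\partial D_{i_j}$ is smooth with strictly positive curvature, it is strictly convex, so each $D_{i_j}$ with $j\in J$ lies in the open half-plane $H_j$ whose boundary $L_j$ is the tangent line to $\partial D_{i_j}$ at $p$, and strict convexity gives $L_j\cap\overline{D_{i_j}}=\{p\}$. If two indices in $J$ had distinct tangent lines, a local quadratic expansion of the two boundary curves inside the open wedge $H_{j_0}\cap H_{j_1}$ would produce points arbitrarily close to $p$ lying in both $D_{i_{j_0}}$ and $D_{i_{j_1}}$, contradicting disjointness. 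Hence all tangent lines $L_j$ for $j\in J$ coincide along a single line $L$, and the corresponding regions lie on the two different sides of $L$ (consistent with Lemma \ref{lemma:intersection_of_two}). Since there are only two sides, this forces $|J|\leq 2$.

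Next, I will choose $B\subset U$ small enough to avoid $\partial D_{i_j}$ for every $j\notin J$, which is possible because $p$ lies at positive distance from each of the finitely many closed curves $\partial D_{i_j}$ with $j\notin J$. For such $j$, $B\cap D_{i_j}$ is simultaneously open and closed in the connected set $B$ and omits $p$, so it is empty; therefore $B\cap\mathcal S=B\setminus\bigcup_{j\in J}D_{i_j}$. A short case analysis on $|J|$ completes the argument: if $|J|=0$, then $B\subset\mathcal S$; if $|J|=1$ with $J=\{j_0\}$, smoothness of $\partial D_{i_{j_0}}$ makes the component of $B\setminus\partial D_{i_{j_0}}$ disjoint from $D_{i_{j_0}}$ a non-empty open subset of $\mathcal S$; if $|J|=2$ with $J=\{j_0,j_1\}$, parametrize both boundary curves near $p$ as graphs over the common tangent line $L$ with positive and negative second derivatives at $p$, respectively, to produce an explicit non-empty open cusp-shaped region contained in $B\setminus(D_{i_{j_0}}\cup D_{i_{j_1}})$.

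The main obstacle is the structural bound $|J|\leq 2$, which depends essentially on strict convexity; without positive curvature (e.g., for packings by squares meeting at a vertex) arbitrarily many regions can share a boundary point and the cusp construction in the case $|J|=2$ fails as well. Once $|J|\leq 2$ is established, the local picture in each case is elementary.
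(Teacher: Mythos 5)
Your argument is correct, but it takes a genuinely different route from the paper's. The paper argues by contradiction: if $U\cap \mathcal S$ had empty interior, then $U\cap\mathcal S=\bigcup_{i\in I}(U\cap \partial D_i)$, and since Lemma \ref{lemma:intersection_of_two} makes the pairwise intersections of the boundary curves finite sets, one can choose a point on an arc of a single $\partial D_j$ inside $U$ that avoids all the other curves; a small neighborhood of that point meets only $D_j$, and the Jordan curve theorem then produces an open piece of $\mathcal S$. You instead work at an arbitrary point $p$ of $U\cap\mathcal S$, bound by $2$ the number of regions whose boundaries pass through $p$ via the tangent-line analysis, and build an explicit open subset of $\mathcal S$ in each of the cases $|J|=0,1,2$ (the cusp region in the last case). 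Your approach buys a sharper local description of $\mathcal S$ (at most two regions touch a residual point, and then only tangentially along a common line from opposite sides), at the cost of a longer case analysis and a heavier use of smoothness through the quadratic expansion; the paper's route needs only strict convexity, channelled through Lemma \ref{lemma:intersection_of_two}. One step to tighten: the assertion that the regions indexed by $J$ lie on two \emph{different} sides of the common tangent line $L$ does not follow from Lemma \ref{lemma:intersection_of_two}, which only controls boundary intersections; it requires the same overlap argument you use for distinct tangent lines, namely that two regions tangent to $L$ at $p$ from the same side would both contain the points $p+tn$ for all small $t>0$, where $n$ is the common inner normal, contradicting disjointness.
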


In this case $\dimh(U\cap \mathcal S)=2$ and there is nothing to prove. To prove the claim, note first that if $U$ does not intersect any region $D_i$ then $U\subset \mathcal S$ and the claim is true. Suppose now that at least one and at most finitely many regions $D_{i}$, $i\in I$, intersect $U$, but $U\cap \mathcal S$ has empty interior. By Lemma \ref{lemma:intersection_of_two} it follows that any two of the curves $\partial D_i$ can have at most one point in common.   Since $\partial (\bigcup_{i\in I} D_i )= \bigcup_{i\in I} \partial D_i$, it follows that $U\cap \mathcal S=U\setminus \bigcup_{i\in I}D_i=  \bigcup_{i\in I}(U\cap \partial D_i)$.
Let $j\in I$ and note that $U\cap \partial D_j\neq \emptyset$. Since $U$ is open, there exists an arc $J$ of $\partial D_j$ that is contained in $U$. By the above, at most finitely many points of $J$ can intersect some curve $\partial D_i$, $i\in I$, $i\neq j$. Hence, there exists a point $z\in J$ and a sufficiently small neighborhood $V\subset U$ of $z$ that does not intersect $D_i$, $i\in I$, $i\neq j$.  Therefore, $V\setminus {D_j} \subset U\setminus \bigcup_{i\in I} D_i$. Moreover, $V\setminus D_j$ has non-empty interior because $\partial D_j$ is a Jordan curve. We already have a contradiction and our claim is proved.

\begin{claim}\label{claim:interior_infinitely_many}
If $U$ is an open subset of $\Omega$, $U\cap \mathcal S\neq \emptyset$, and at most finitely many regions $D_i$ are \textit{contained} in $U$, then $U\cap  \mathcal S$ has non-empty interior. 
\end{claim}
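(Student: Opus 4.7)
The plan is to reduce Claim \ref{claim:interior_infinitely_many} to the already established Claim \ref{claim:boundary_infinitely_many} by producing a smaller open neighborhood (inside $U$) of a residual point that meets only finitely many of the $D_i$. The argument rests on a simple dichotomy for any region that intersects a small ball centered at a residual point of $U$: either its diameter is large relative to the ball, in which case Lemma \ref{lemma:diameters_to_zero}(ii) caps how many such regions there can be, or its diameter is small, in which case it is forced to sit inside $U$ and is therefore one of the finitely many regions contained in $U$.

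More concretely, I would start by picking $p\in U\cap \mathcal S$ and choosing $r>0$ small enough that the closed ball $\br{B(p,r)}$ is contained in $U$. Set $V=B(p,r/3)$ and let $E=\br V$, which is compact. Lemma \ref{lemma:diameters_to_zero}(ii), applied to the family $\{D_i\}_{i\in\N}$ of disjoint $M$-Ahlfors $2$-regular regions (the Ahlfors regularity being supplied by Theorem \ref{theorem:chordarc} together with Lemma \ref{lemma:chordarc_area}), yields that only finitely many indices $i$ satisfy $D_i\cap E\neq \emptyset$ and $\diam(D_i)\geq r/3$. On the other hand, if $D_i\cap V\neq \emptyset$ and $\diam(D_i)<r/3$, then for any $q\in D_i\cap V$ we have $D_i\subset B(q,r/3)\subset B(p,2r/3)\subset U$, so $D_i$ is contained in $U$; by hypothesis, there are only finitely many such indices. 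Combining these two cases shows that at most finitely many $D_i$ intersect $V$.

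At that point, $V$ is open, $V\cap \mathcal S\ni p$ is non-empty, and $V$ intersects only finitely many regions of the packing. Claim \ref{claim:boundary_infinitely_many}, applied with $V$ in place of $U$, implies that $V\cap \mathcal S$ has non-empty interior, and since $V\subset U$ this gives the desired conclusion that $U\cap \mathcal S$ has non-empty interior. The argument is short and direct; the only moderately subtle point is the geometric observation in the second case, where one must choose the radius of the inner ball small enough (e.g.\ $r/3$) to guarantee that any small-diameter $D_i$ meeting it is pushed entirely into $U$.
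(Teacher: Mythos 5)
Your proposal is correct and takes essentially the same route as the paper: both reduce to Claim \ref{claim:boundary_infinitely_many} by exhibiting a small open neighborhood of a residual point that meets only finitely many $D_i$, with Lemma \ref{lemma:diameters_to_zero}(ii) controlling the regions of large diameter. The only cosmetic difference is that the paper first shrinks to a subset $U'$ containing no region at all, so that every region meeting a compactly contained $V$ must cross $\partial U'$ and hence is ``large,'' whereas you keep $U$ and dispose of the small-diameter regions directly via the hypothesis that only finitely many $D_i$ are contained in $U$.
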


Indeed, suppose that finitely many regions $D_i$ are contained in $U$ (but there could be infinitely many regions intersecting $U$). We restrict to a sufficiently small open set $U'\subset U$ with $U'\cap \mathcal S\neq \emptyset$ so that no region $D_i$ is contained in $U'$. Thus, if a region $D_i$ intersects $U'$, then it also intersects $\partial U'$ by connectivity. Let $V$ be an open set compactly contained in $U'$ such that $V\cap \mathcal S\neq \emptyset$. If $D_i$ intersects $V$, then it must also intersect $\partial U'$, so $\diam(D_i)>\dist(\partial U',V)>0$. By Lemma \ref{lemma:diameters_to_zero} (ii) there can be at most finitely many regions $D_i$ intersecting $V$. Claim \ref{claim:boundary_infinitely_many} now implies that $V\cap \mathcal S$ has non-empty interior.

From now on, we let $U$ be an open subset of $\Omega$ with $U\cap \mathcal S\neq \emptyset$. We consider an open square $V\subset \br V\subset  U$ such that $V\cap \mathcal S\neq \emptyset$. By Claim \ref{claim:interior_infinitely_many}, we may assume $V$ contains infinitely many regions $D_i$. We fix a region $D_j\subset V$. Then we consider a closed square $A\subset \br V$ containing $D_j$ such that 
$$\diam(D_j)\leq \diam(A)= \sqrt{2}\ell(A) \leq \sqrt{2}\diam(D_j),$$
where $\ell(A)$ denotes the side length of the square $A$; see Figure \ref{figure:tangential_square}.
\begin{figure}
	\begin{overpic}[width=.4\linewidth]{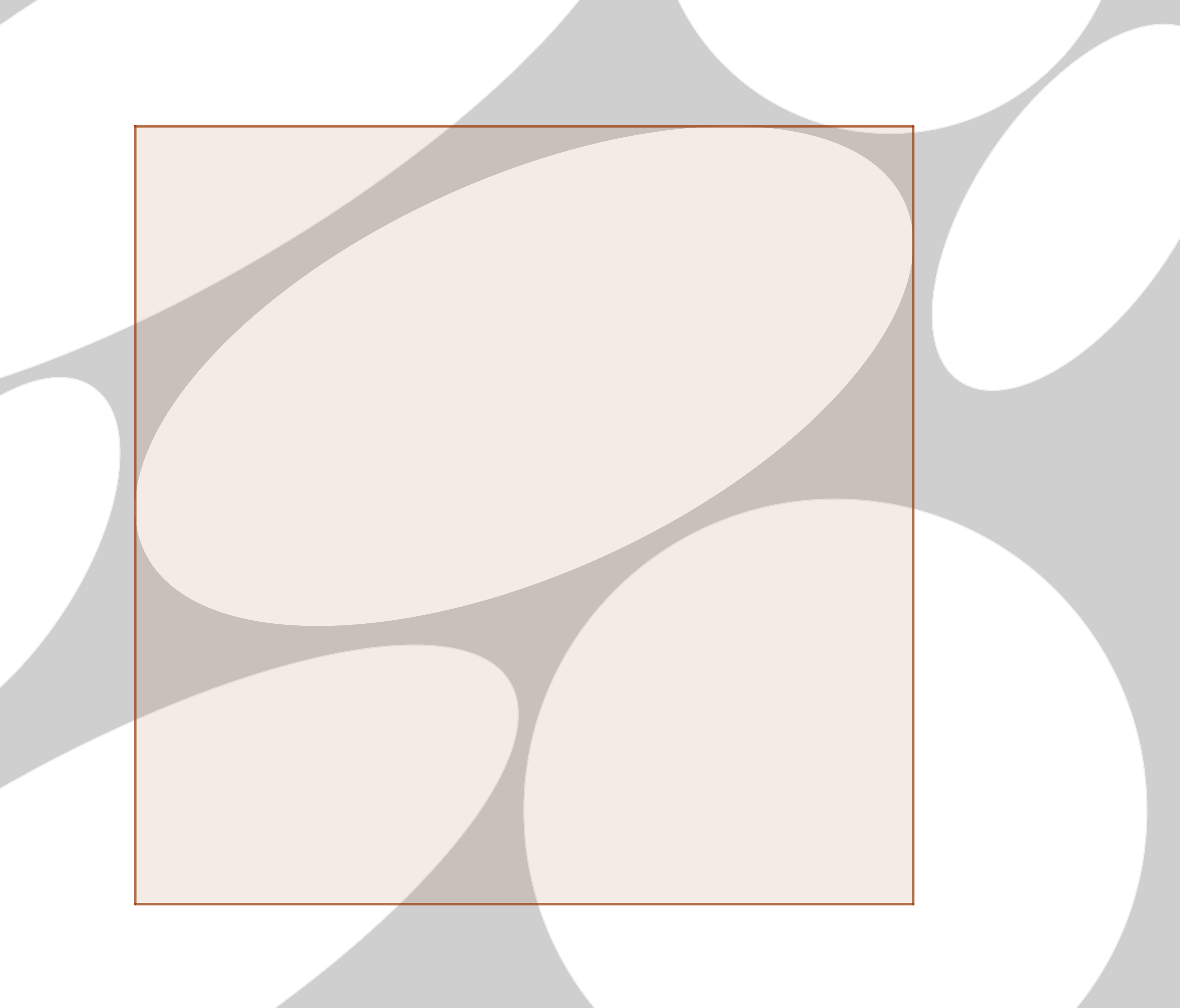}
		\put (42,52) {$B_{i_0}$}
	\end{overpic}
	\caption{The region $B_{i_0}=D_j$ contained inside the square $A$.}\label{figure:tangential_square}
\end{figure}
Note that $A\subset \br V\subset U$.  Therefore, it suffices to show that $$\dimh(A\cap \mathcal S)>K,$$
for a suitable constant $K>1$, depending only on $k$. 

After scaling and translating all involved sets, we may assume that $A$ is the unit square $[0,1]^2$. Moreover, by Claim \ref{claim:interior_infinitely_many} we may assume that there are infinitely many regions $D_i$ \textit{contained} in $\inter(A)$. We denote these by $B_i$, $i\in \N$. We remark that in Larman's setting in \cite{Larman:DimensionPackings} no regions $D_i$ are allowed to intersect the boundary of the unit square. We will see later in Section \ref{section:polygon_tilde} that in our case this possibility causes some complication that we have to overcome. By Lemma \ref{lemma:diameters_to_zero} (ii), we have
\begin{align*}
\lim_{i\to\infty}\diam(B_i)=0.
\end{align*}
Therefore, we may reorder the regions $B_i$ so that their diameters decrease, i.e., $\diam(B_i)\geq \diam(B_{i+1})$ for $i\in \N$. Note that by the choice of the square $A$ in the previous paragraph, one of the regions $B_i$, say $B_{i_0}$, is distinguished and has the property that
$$\sqrt{2}\diam(B_{i_0})\geq  \diam(A)=\sqrt{2}.$$
Since $\diam(B_1)\geq \diam(B_{i_0})$, it follows that $\diam(B_1)\geq 1$. The region $B_1$ is $M$-Ahlfors $2$-regular, so 
\begin{align}\label{theorem:b1_area}
\area(B_1)\geq M \diam(B_1)^2 \geq M.
\end{align}

We now begin to estimate the Hausdorff dimension of $S_\infty \coloneqq A\cap \mathcal S$. Consider a finite cover of the compact set $S_{\infty}$ by open squares $K_j$, $j\in \{1,\dots,p\}$. We may assume that none of the squares is contained in any region $D_i$. In order to prove the theorem, it suffices to show that there exist $s\in (1,2)$, depending only on $k$, and a constant $c>0$, depending only on $k$ and $s$, such that
\begin{align}\label{theorem:sum_diameter_bound}
\sum_{j=1}^p \diam(K_j)^s \geq c.
\end{align}
Recall the remarks after the definition of Hausdorff dimension in Section \ref{section:preliminaries}. In what follows we fix $s\in (1,2)$. Later we will impose more restrictions on $s$.

We define $S_n=S_\infty \cup \bigcup_{i=n+1}^\infty B_i$ for $n\in \N\cup \{0\}$. Note that there exists a $\delta>0$ such that $\bigcup_{j=1}^p K_j$ covers the open $\delta$-neighborhood of $S_\infty$. Hence, if $\diam({B_i})<\delta$, then the fact that $\partial B_i\subset S_\infty$ implies that ${B_i}\subset \bigcup_{j=1}^p K_j$. Since the diameters of $B_i$ tend to $0$ as $i\to\infty$, it follows that for all sufficiently large $n\in \N$ we have 
$$S_n\subset \bigcup_{j=1}^p K_j.$$
Throughout the proof $n\geq 1$ is fixed, so that the above inclusion holds. 

For $x\in [0,1]$ we denote by $L_x$ the vertical line passing through the point $(x,0)$. For each $x\in [0,1]$, the intersection of the line $L_x$ with $\bigcup_{j=1}^p K_j$, if non-empty, is a union of finitely many open segments $I(x,r)$ of length $\ell (x,r)$,  $r\in \{1,\dots,v(x)\}$.   We define 
$$g_n(x)= \sum_{r=1}^{v(x)} \ell(x,r)^{s-1}$$
for $x\in [0,1]$. Since $s-1<1$, we have
\begin{align*}
\ell(x,r)^{s-1}\leq \sum_{j:K_j\cap I(x,r)\neq \emptyset}  \length(L_x\cap K_j)^{s-1}.
\end{align*}
Therefore,
\begin{align*}
g_n(x)\leq \sum_{j=1}^{p} \length(L_x\cap K_j)^{s-1}.
\end{align*}
By integrating, we obtain
\begin{align}\label{theorem:length_bound_integral}
\int_0^1 g_n(x) \, dx \leq  \sum_{j=1}^p \ell(K_j)^{s},
\end{align}
where $\ell(K_j)$ denotes the side length of $K_j$. 

We also define a function $g_m(x)$ for $m\in \{0,\dots,n-1\}$ as follows. We append to the cover $\bigcup_{j=1}^p K_j$ of $S_n$ the regions $B_{m+1},\dots,B_n$ so that we obtain an open cover of $S_{m}$. The intersection of $L_x$ with that open cover is a union of finitely many  line segments $I(x,r,m)$ of length $\ell(x,r,m)$, $r\in \{1,\dots,v(x,m)\}$. We define
$$g_m(x)=\sum_{r=1}^{v(x,m)}\ell(x,r,m)^{s-1}.$$
Note that the function $g_m$ depends implicitly on $s$. A crude estimate for $g_m$ is obtained as follows. We observe that for $m<n$ we have $g_m(x)\geq \length(L_x\cap B_{m+1})^{s-1}$, so 
\begin{align*}
\int_0^1 g_m(x)\, dx &\geq \int_0^1 \length(L_x\cap B_{m+1})^{s-1}\, dx \geq \int_0^1 \length(L_x\cap B_{m+1})\, dx\\
&=\area(B_{m+1})
\end{align*}
since $s-1<1$ and the integrand is bounded above by $1$. In particular, $\int_0^1 g_0 \geq \area(B_1)\geq M$ by \eqref{theorem:b1_area}.

\bigskip

\subsection{Main estimate}\label{section:main_estimate}

If we could show that there exists $s>1$, depending only on $k$ such that $\int_0^1 {g_m} \geq \int_0^1 g_{m-1}$ for $m\in \{1,\dots,n\}$, then we would have that $\int_0^1 g_n \geq \int_0^1 g_0\geq M$, so $\sum_{j=1}^p \ell(K_j)^{s}\geq M$ by \eqref{theorem:length_bound_integral}. This would imply the desired \eqref{theorem:sum_diameter_bound}. In fact, we will show that $\int_0^1 {g_m}$ is \textit{essentially} larger than $\int_0^1 g_{m-1}$ in the following sense. We will prove that there exist some constant $c_0>0$, depending only on $s$ and $k$,  such that if $s$ is sufficiently close to $1$, depending only on $k$, then for all $m\in \{1,\dots,n\}$ we have
\begin{align}\label{theorem:main_estimate}
\int_0^1 g_m \geq \int_0^1g_{m-1} -c_0\diam( B_m)^s \cdot \# J_m,
\end{align}
where $J_m$ is the (possibly empty) set of indices $j\in \{1,\dots,p\}$ such that $B_m\cap \partial K_j\neq \emptyset$  and $\diam(K_j) \geq c_1 \length(\partial B_m)$ for a constant $c_1>0$, depending on $s,k$. Here $\# J_m$ denotes the cardinality of $J_m$. In other words, $J_m$ contains the indices $j$ such that $\partial K_j$ intersects $B_m$ and $K_j$ is relatively large. We remark that our main iterative estimate already departs from the corresponding estimate of Larman \cite[(35), p.~301]{Larman:DimensionPackings}.

We note that 
\begin{align*}
\sum_{m=1}^\infty \diam( B_m)^s\cdot  \# J_m=  \sum_{j=1}^p \sum_{m: j\in J_m} \diam( B_m)^s.
\end{align*}
Since $\dist(B_m, \partial K_j) =0$ and $\diam(B_m) \leq c_1^{-1}\diam(K_j) =c_1^{-1}\sqrt{2} \ell(K_j)$ for $j\in J_m$, we can apply Lemma \ref{lemma:segment_intersection} to each of the four edges of $\partial K_j$ and conclude that
$$\sum_{m: j\in J_m} \diam( B_m)^s \leq c_2 \ell(K_j)^s.$$
for some constant $c_2>0$ depending only on $s,k$. It follows that 
\begin{align*}
\sum_{m=1}^\infty  \diam( B_m)^s \cdot \#J_m \leq   c_2\sum_{j=1}^p \ell(K_j)^s.
\end{align*}

Assuming the estimates in \eqref{theorem:main_estimate}, by \eqref{theorem:length_bound_integral} we have
\begin{align*}
\sum_{j=1}^p \ell(K_j)^{s} \geq \int_0^1 g_n &\geq \int_0^1 g_{0} - c_0\sum_{m=1}^\infty  \diam( B_m)^s \cdot \#J_m \\
&\geq M - c_0c_2 \sum_{j=1}^p \ell(K_j)^s.
\end{align*}
Therefore,
\begin{align*}
\sum_{j=1}^p \ell(K_j)^{s} \geq M(1+c_0c_2)^{-1}
\end{align*}
and the latter is a positive constant depending only on $s$ and $k$. This completes the proof of \eqref{theorem:sum_diameter_bound} and thus of the main theorem. It remains to show the main estimate \eqref{theorem:main_estimate}. 

\bigskip

From now on, we fix $m\in \{1,\dots,n\}$ and our goal is to establish \eqref{theorem:main_estimate}.

\subsection{Basic estimate of the difference $g_m-g_{m-1}$}\label{section:basic_estimate}
We write
\begin{align*}
\int_0^1 g_m= \int_0^1g_{m-1} + \int_0^1(g_m-g_{m-1}).
\end{align*}
Our goal is to estimate the difference $g_m-g_{m-1}$ from below. Let $x\in [0,1]$ and consider the vertical line $L_x$. If $L_x$ does not intersect the region $B_m$, then we have $g_m(x)=g_{m-1}(x)$. Suppose, now, that $L_x$ intersects $B_m$. By convexity, $L_x\cap B_m$ is a line segment $J$. Observe that both of the endpoints of $J$ lie in $S_\infty\subset S_m$ and in particular they lie on $\partial B_m$.  The line segment $J$ intersects some of the segments $I(x,r,m)$, $r\in \{1,\dots,v(x,m)\}$. Recall that these line segments are the components of the intersection of $L_x$ with $\bigcup_{j=1}^p K_j\cup \bigcup_{i=m+1}^n B_i$. After renumbering, we assume that $J$ intersects the components $I(r)\coloneqq I(x,r,m)$, $r\in \{1,\dots,w\}$, and that these are ordered in an increasing fashion. This implies that $I(1)$ and $I(w)$ contain the endpoints of $J$; see Figure \ref{figure:basic_estimate}. Note that $I(1)$ and $I(w)$ could coincide, in which case we have $g_m(x)=g_{m-1}(x)$. We have the following estimate.

\begin{figure}
	\begin{overpic}[width=.75\linewidth]{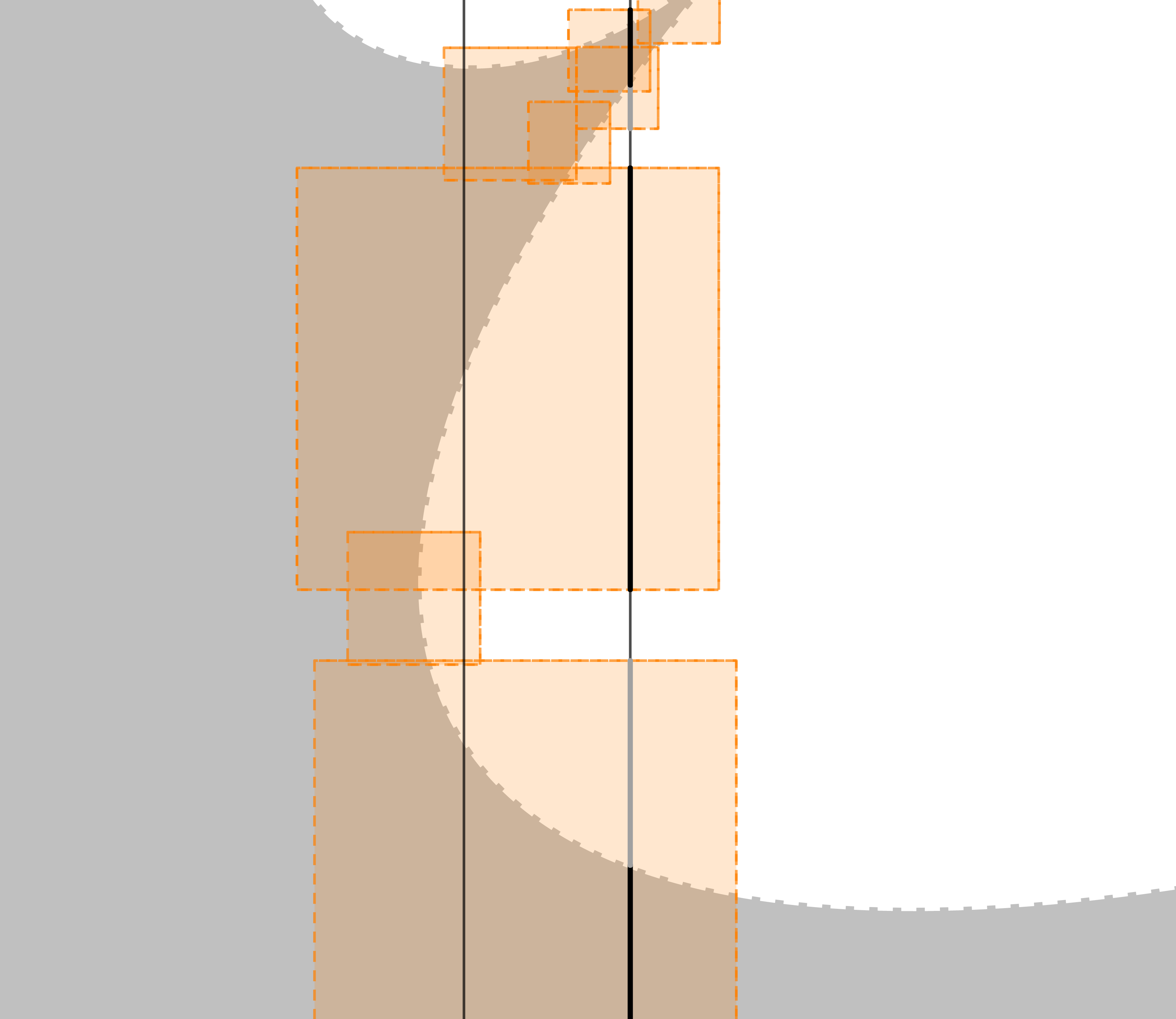}
		\put (80,50) {$B_m$}
		\put (55,8) {$\ell_{\text{out}}(1)$}
		\put (55,25) {$\ell_{\text{in}}(1)$}
		\put (55,60) {$\ell(2)$}
		\put (57,76) {$\ell_{\text{in}}(3)$}
		\put (57,80) {$\ell_{\text{out}}(3)$}
	\end{overpic}
	\caption{The intersection of two vertical lines with $\bigcup_{j=1}^p K_j\cup \bigcup_{i=m+1}^n B_i$. For the line on the left we have $w=1$ and $I(1)=I(w)$. For the line on the right we have $w=3$ and $I(1)\neq I(w)$.}\label{figure:basic_estimate}
\end{figure}

\begin{lemma}\label{lemma:basic_estimate}
Let $L=L_x$ be a vertical line with $L\cap B_m\neq \emptyset$ and consider the segments $I(1)$ and $I(w)$ as above, which are the components of $\bigcup_{j=1}^p K_j\cup \bigcup_{i=m+1}^n B_i$ that contain the endpoints of the segment $L\cap B_m$.
\begin{enumerate}[\upshape(i)]
\item If $I(1)=I(w)$, then $g_m(x)=g_{m-1}(x)$.
\item In general,
\begin{align*}
g_m(x)-g_{m-1}(x) \geq - \length(L_x\cap B_m)^{s-1}.
\end{align*}
\item If $I(1)\neq I(w)$, then
\begin{align*}
g_m(x)-g_{m-1}(x) &\geq \length(I_1)^{s-1} + \length(I_2)^{s-1}\\
&\qquad\qquad-\length(I_1\cup I_2\cup (L_x\cap B_m))^{s-1},
\end{align*}
where $I_1$ and $I_2$ are any subsegments of $I(1)\setminus B_m$ and $I(w)\setminus B_m$, respectively.
\end{enumerate}
\end{lemma}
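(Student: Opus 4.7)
The plan is to identify precisely how the two covers of $L_x$ differ in a neighborhood of $J:=L_x\cap B_m$. The $g_{m-1}$-cover is obtained from the $g_m$-cover by inserting $B_m$; since $B_m$ is convex, $J$ is a single closed segment, and its endpoints lie on $\partial B_m\subset S_\infty\subset S_m$, hence inside components of the $g_m$-cover, namely $I(1)$ and $I(w)$. Any intermediate $I(r)$ with $2\leq r\leq w-1$ is disjoint from $I(1)$ and $I(w)$ but meets $J$, so it lies strictly inside $J$. When $B_m$ is added, the segments $I(1),\dots,I(w)$ merge together with $J$ into a single component $M$ of the $g_{m-1}$-cover, while all other components are unchanged. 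Explicitly $M=I(1)$ when $I(1)=I(w)$ and $M=I(1)\cup J\cup I(w)$ otherwise, yielding the key identity
\begin{align*}
g_m(x)-g_{m-1}(x)=\sum_{r=1}^{w}\length(I(r))^{s-1}-\length(M)^{s-1}.
\end{align*}

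Part (i) then follows immediately: if $I(1)=I(w)$, then no intermediate $I(r)$ can exist, since it would sit inside $J\subset I(1)$ and contradict disjointness of components; thus $w=1$, $M=I(1)$, and the difference vanishes. For part (ii), I will use that $t\mapsto t^{s-1}$ is concave on $[0,\infty)$ with value $0$ at $0$ (because $s\in(1,2)$), hence subadditive. Applied to $\length(M)\leq \length(I(1))+\length(J)+\length(I(w))$, this gives $\length(M)^{s-1}\leq \length(I(1))^{s-1}+\length(J)^{s-1}+\length(I(w))^{s-1}$; discarding the nonnegative intermediate terms from the sum produces $g_m(x)-g_{m-1}(x)\geq -\length(J)^{s-1}$.

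The essential content is in part (iii). Write $a^{*}=\length(I(1)\setminus J)$ and $b^{*}=\length(I(w)\setminus J)$. Disjointness of $I(1)$ and $I(w)$ gives $\length(M)=a^{*}+b^{*}+\length(J)$. Retaining only the $r=1$ and $r=w$ terms in the sum and using $\length(I(1))\geq a^{*}$ and $\length(I(w))\geq b^{*}$, I obtain
\begin{align*}
g_m(x)-g_{m-1}(x)\geq (a^{*})^{s-1}+(b^{*})^{s-1}-(a^{*}+b^{*}+\length(J))^{s-1}.
\end{align*}
Since $L_x\cap B_m=J$, any subsegment $I_1$ of $I(1)\setminus B_m=I(1)\setminus J$ satisfies $\length(I_1)\leq a^{*}$, and similarly $\length(I_2)\leq b^{*}$; moreover $I_1,I_2,J$ are pairwise disjoint, so $\length(I_1\cup I_2\cup J)=\length(I_1)+\length(I_2)+\length(J)$. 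Finally, I will verify that the function $\phi(x,y):=x^{s-1}+y^{s-1}-(x+y+\length(J))^{s-1}$ is coordinatewise increasing on $(0,\infty)^{2}$: $\partial_x\phi=(s-1)\bigl(x^{s-2}-(x+y+\length(J))^{s-2}\bigr)>0$ because $t\mapsto t^{s-2}$ is decreasing for $s\in(1,2)$, and symmetrically for $\partial_y\phi$. Hence $\phi(\length(I_1),\length(I_2))\leq \phi(a^{*},b^{*})$, which is exactly the claimed inequality. The only delicate point is the bookkeeping of subsegments inside $I(1)$ and $I(w)$; the analytic input reduces to subadditivity of $t\mapsto t^{s-1}$ together with coordinatewise monotonicity of $\phi$.
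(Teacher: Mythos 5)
Your proof is correct and follows essentially the same route as the paper: the exact identity $g_m(x)-g_{m-1}(x)=\sum_{r=1}^{w}\length(I(r))^{s-1}-\length(M)^{s-1}$ followed by a coordinatewise monotonicity argument that rests on $t\mapsto t^{s-2}$ being decreasing. The only cosmetic differences are that the paper keeps the inner lengths $\ell_{\mathrm{in}}(1),\ell_{\mathrm{in}}(w)$ inside its auxiliary function $h(a,b)$ and only afterwards discards them, and it deduces (ii) as the special case $\length(I_1)=\length(I_2)=0$ of (iii) rather than via subadditivity.
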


\begin{proof}
As already pointed out, if $I(1)=I(w)$, then $g_m(x)=g_{m-1}(x)$, i.e., (i) holds. Moreover, the estimate in (ii) holds trivially in this case. Henceforth, we assume that $I(1)\neq I(w)$. We denote by $\ell(r)$ the length of $I(r)$. We have $\ell(1)=\ell_{\mathrm{in}}(1)+ \ell_{\mathrm{out}}(1)$, where $\ell_{\mathrm{in}}(1)$ is the length of $I(1)\cap B_m$ and $\ell_{\mathrm{out}}(1)$ is the length of $I(1)\setminus B_m$; see Figure \ref{figure:basic_estimate}. Similarly, we have $\ell(w)=\ell_{\mathrm{in}}(w)+ \ell_{\mathrm{out}}(w)$. We have
\begin{align}\label{theorem:difference_gm}
\begin{aligned}
g_m(x)-g_{m-1}(x)&= ( \ell_{\mathrm{in}}(1)+ \ell_{\mathrm{out}}(1))^{s-1} + \ell(2)^{s-1}+\dots+\ell(w-1)^{s-1} \\ &\qquad\qquad + (\ell_{\mathrm{in}}(w)+  \ell_{\mathrm{out}}(w))^{s-1}\\
&\qquad\qquad- (\ell_{\mathrm{out}}(1) +\length(L_x\cap B_m) +\ell_{\mathrm{out}}(w))^{s-1}.
\end{aligned}
\end{align}
Consider the auxiliary function $$h(a,b)=(c_1+a)^{s-1}+c_2+(c_3+b)^{s-1}-(a+\ell +b)^{s-1},$$ 
where $c_1=\ell_{\mathrm{in}}(1)$, $c_2=\ell(2)^{s-1}+\dots+\ell(w-1)^{s-1}$, $c_3=\ell_{\mathrm{in}}(w)$, and $\ell=\length(L_x\cap B_m)$. We note that for $a,b\geq 0$ we have
\begin{align*}
\frac{\partial h}{\partial a}\geq 0 \qquad \textrm{and} \qquad \frac{\partial h}{\partial b}\geq 0,
\end{align*}
since $c_1,c_3\leq \ell$. Therefore, the right-hand side of \eqref{theorem:difference_gm} becomes smaller if we replace $\ell_{\mathrm{out}}(1)$ and $\ell_{\mathrm{out}}(w)$ with smaller quantities. Note that if $I_1$ and $I_2$ are subsegments of $I(1)\setminus B_m$ and $I(w)\setminus B_m$, respectively, then $\length(I_1)\leq \ell_{\mathrm{out}}(1)$ and $\length(I_2)\leq \ell_{\mathrm{out}}(w)$. Therefore,
\begin{align*}
g_m(x)-g_{m-1}(x)&\geq ( \ell_{\mathrm{in}}(1)+ \length(I_1))^{s-1} + \ell(2)^{s-1}+\dots+\ell(w-1)^{s-1} \\ &\qquad\qquad + (\ell_{\mathrm{in}}(w)+  \length(I_2))^{s-1}\\
&\qquad\qquad- (\length(I_1) +\length(L_x\cap B_x) +\length(I_2))^{s-1}\\
&\geq  \length(I_1)^{s-1} + \length(I_2)^{s-1}\\
&\qquad\qquad-\length(I_1\cup I_2\cup (L_x\cap B_m))^{s-1}.
\end{align*}
This completes the proof of (iii). Note that part (ii) also follows immediately by taking $\length(I_1)=\length(I_2)=0$.
\end{proof}

\bigskip

\subsection{Restriction to a polygon around $B_m$}\label{section:polygon}
In this subsection we will construct an open polygonal region $P$ that circumscribes $B\coloneqq B_m$ such that $P\setminus B$ does not intersect any regions $D_l$ with diameter larger than or equal to the diameter of $B$. Note that the polygonal region $P$ might intersect only regions $B_l$, which are contained in the unit square, or it might also intersect some of the regions $D_l$ that intersect the boundary of the unit square and thus are not contained in the collection $B_l$, $l\in \N$. Then, in Section \ref{section:polygon_tilde}, we will refine the polygon $P$  and construct another polygon $\widetilde P \subset P$ that also circumscribes $B$ such that the intersection of $\widetilde P$ with regions $D_l$ that intersect $\partial ([0,1]^2)$ is negligible in a sense. Therefore, $\widetilde P$ essentially intersects only regions $B_l$, $l\in \N$. This will be crucial for the application of the estimates of Lemma \ref{lemma:polygon_good_estimate} below.
 
For a set $E\subset \C$ we denote the projection of $E$ to the $x$-axis by $\proj(E)$. Moreover, for $\mu>0$ we define the \textit{$\mu$-strip of $E$} to be the open set of points in the plane whose projection to the $x$-axis has distance less than $\mu$ from $\proj(E)$.

Consider the open rectangle $R$ that circumscribes $B$ and has sides parallel to the coordinate axes. Note that each side of the rectangle intersects $\partial B$ in precisely one point, by the strict convexity of $\partial B$. Let $P$ be an open polygonal region that \textit{circumscribes} $B$ with $B\subset P\subset R$; see Figure \ref{figure:construction_P}. By definition, each edge of $\partial P$ is tangent to $\partial B$. The convexity of $B$ implies that $P$ is also convex. We denote by $Z(P)$ the finite set of the points of tangency between $\partial P$ and $\partial B$. Note that $Z(P)\supset Z(R)$ and in general if $P'\subset P$ is another polygonal region that circumscribes $B$, then $Z(P')\supset Z(P)$. 

\begin{figure}
\centering
\begin{minipage}{.49\linewidth}
	\centering
	\begin{overpic}[width=0.9\linewidth]{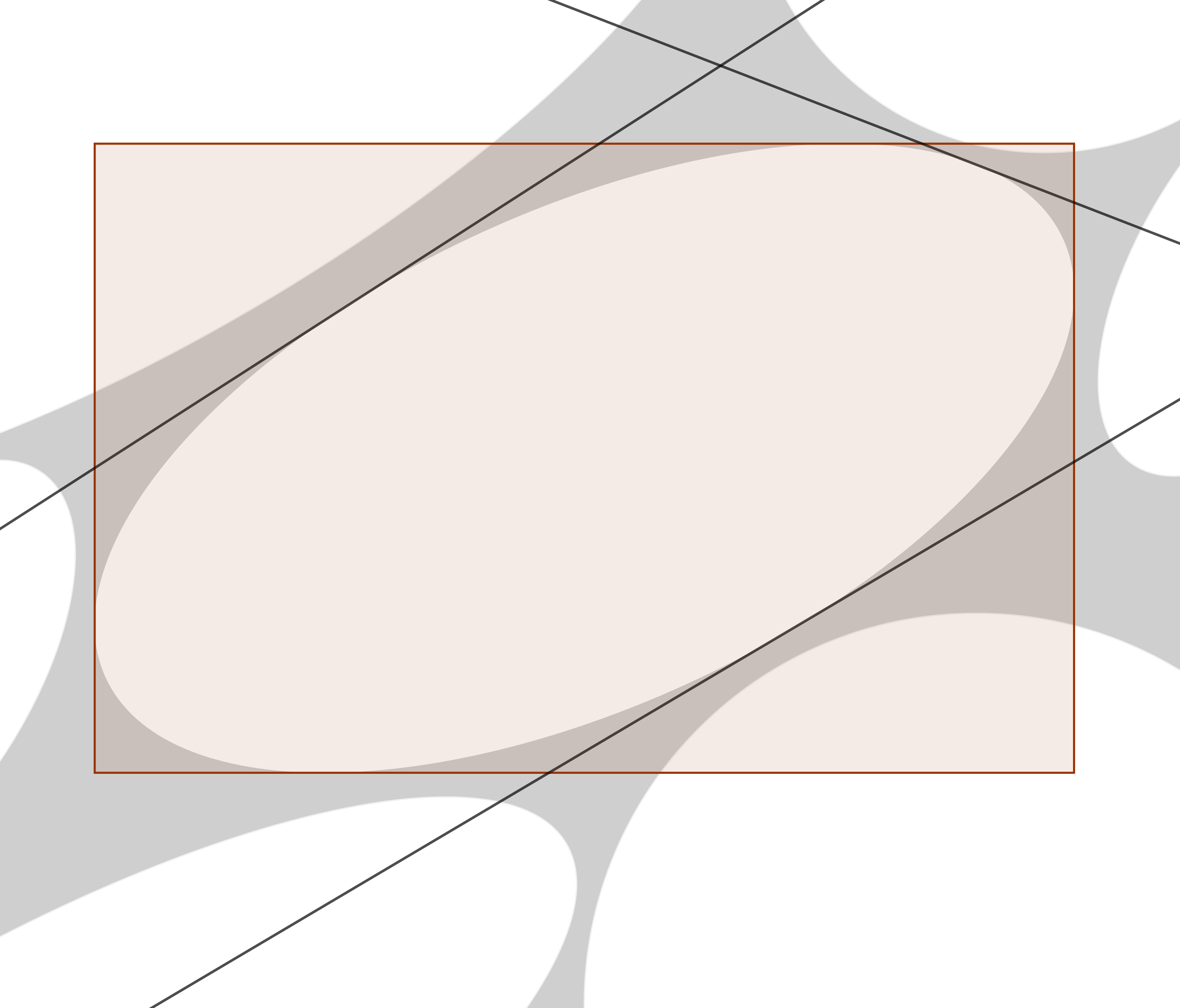}
		\put (47,43) {$B$}
	\end{overpic}
	
\end{minipage}
\begin{minipage}{.49\linewidth}
	\centering
	\begin{overpic}[width=0.9\linewidth]{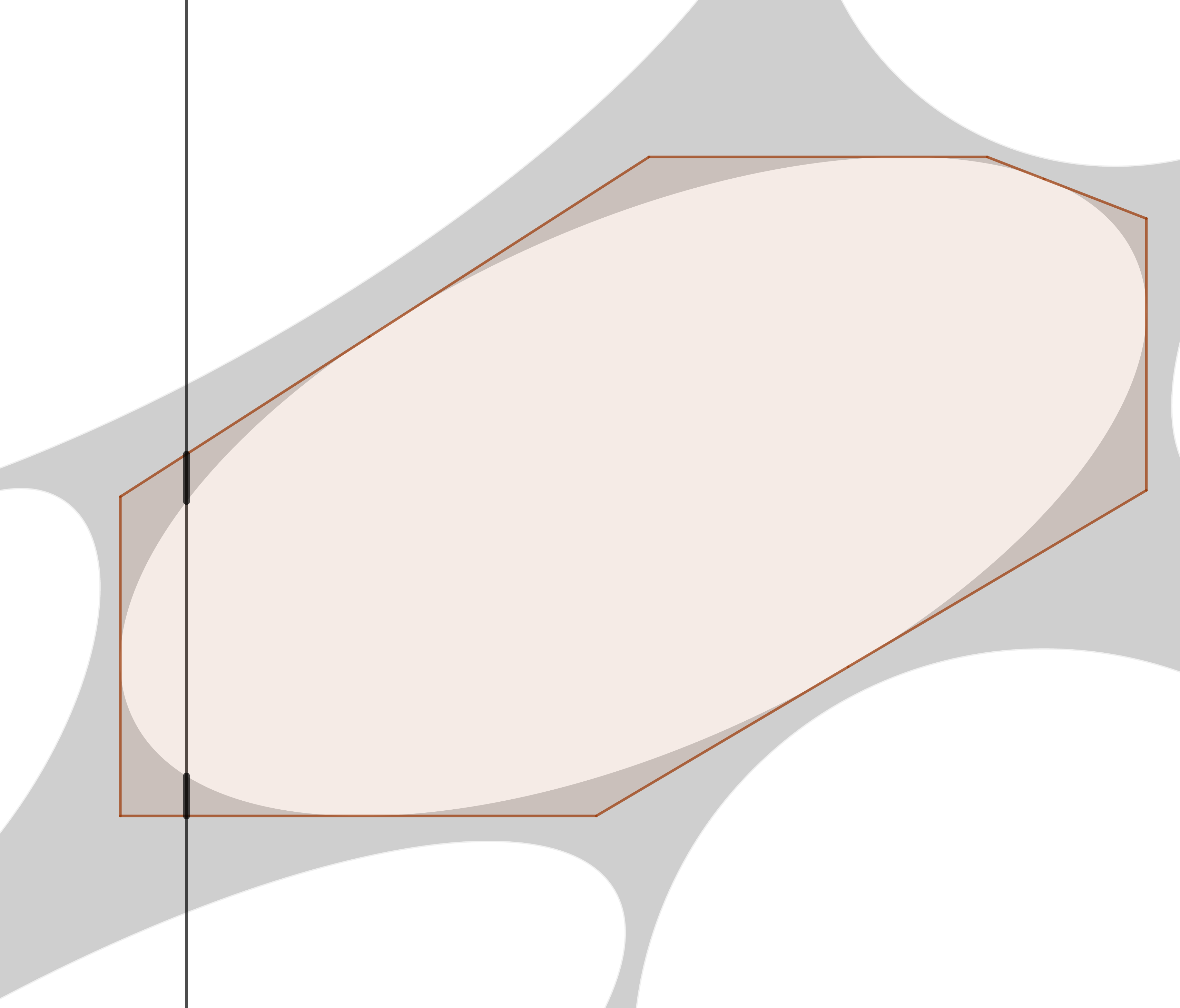}
		\put (47,43) {$B$}
		\put (17, 43) {$I_2$}
		\put (17, 17) {$I_1$}
		\put (8, 40) {$z_2$}
		\put (14.6, 42) {{\LARGE\textbf{.}}}
		\put (7, 48) {$w_1$}
		\put (14.6, 47) {{\LARGE\textbf{.}}}
		\put (37, 56) {$z_1$}
		\put (35, 59.3) {{\LARGE\textbf{.}}}
	\end{overpic}
\end{minipage}

\caption{Construction of a polygon $P\subset R$ that circumscribes $B$. First, $B$ is separated from some regions $D_i$ by lines tangent to $\partial B$. Then $P$ is formed by these lines and the sides of $R$.}\label{figure:construction_P}

\end{figure}

\begin{lemma}\label{lemma:polygon_good_estimate}
Consider a polygonal region $P\subset R$ that circumscribes $B$. Let $L=L_x$ be a vertical line with $L\cap B\neq \emptyset$ and denote by $I_i$, $i=1,2$, the components of $L\cap(P\setminus B)$. Finally, let $\lambda>0$ and suppose that $L$ does not intersect the $\lambda \cdot \length(\partial B)$-strip of the set $Z(P)$. Then the following statements are true.
\begin{enumerate}[\upshape(i)]
\item For $i=1,2$ we have
\begin{align*}
\frac{\length(I_i)}{\length(\partial B)} \geq \frac{k}{2\pi}\lambda^2.
\end{align*} 
\item For each $p>0$ there exists a constant $s_0>1$ depending only on $k$ and $p$ such that if $s\in (1,s_0)$ and $\lambda=(s-1)^p$, then
\begin{align*}
\length(I_1)^{s-1}+\length(I_2)^{s-1}- \length(L\cap P)^{s-1} \geq \frac{1}{2} \length(\partial B)^{s-1}.
\end{align*}
\item Suppose that $s$ and $\lambda$ are as in \textup{(ii)} and that
\begin{enumerate}
\item[\upshape(iii-1)] $I_i$, $i=1,2$, does not intersect any region $D_l$ with $\diam(D_l)\geq \diam(B)$, and
\item[\upshape(iii-2)] $I_i$, $i=1,2$, does not intersect any region $D_l$ with  $D_l\cap \partial ([0,1]^2)\neq \emptyset$.
\end{enumerate}
If $I_1$ and $I_2$ lie in different components of $L_x\cap (\bigcup_{j=1}^p K_j \cup \bigcup_{i={m+1}}^n B_i)$, then
\begin{align*}
g_m(x)-g_{m-1}(x) \geq \frac{1}{2}\length(\partial B)^{s-1} >0,
\end{align*}
otherwise,
$$g_m(x)=g_{m-1}(x).$$
\end{enumerate}
\end{lemma}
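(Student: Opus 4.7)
For part (i), the plan is to invoke Lemma \ref{lemma:arc_tangent}(i). Assume without loss of generality that $I_1$ is the lower of the two components of $L\cap(P\setminus B)$, with lower endpoint $q_1\in \partial P$, and let $p$ denote the lower endpoint of $L\cap B$, so that $p\in \partial B$. The edge of $\partial P$ containing $q_1$ is tangent to $\partial B$ at a unique point $z_1\in Z(P)$ (uniqueness follows from strict convexity of $\partial B$); since $B$ lies above this edge, $z_1$ lies on the lower arc of $\partial B$ joining its leftmost to its rightmost point through its bottommost point. This lower arc is monotone in the $x$-coordinate, and $p$ also lies on it. The strip hypothesis gives $|x(p)-x(z_1)|\geq \lambda\length(\partial B)$, so monotonicity yields $\length(\partial B|[p,z_1])\geq \lambda\length(\partial B)$. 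Combining this with the elementary bound $\length(I_1)\geq \dist(p,L(z_1))$ (valid because $q_1\in L(z_1)$) and Lemma \ref{lemma:arc_tangent}(i) completes the estimate.

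For part (ii), set $\ell=\length(\partial B)$ and $\alpha=s-1>0$. Part (i) gives $\length(I_i)/\ell\geq (k/2\pi)\alpha^{2p}$, and since $P\subset R$ while $\length(\partial B)\geq 2b$, where $b$ is the vertical side length of $R$ (each of the two monotone side arcs of $\partial B$ has vertical extent $b$), we also have $\length(L\cap P)/\ell\leq 1/2$. The plan is then to show
\begin{align*}
\ell^{-\alpha}\bigl(\length(I_1)^\alpha+\length(I_2)^\alpha-\length(L\cap P)^\alpha\bigr)\geq 2\bigl[(k/2\pi)\alpha^{2p}\bigr]^\alpha - (1/2)^\alpha \longrightarrow 1
\end{align*}
as $s\to 1^+$; the limit on the right follows from $(1/2)^\alpha\to 1$ together with the identity $\alpha\log\bigl[(k/2\pi)\alpha^{2p}\bigr]=\alpha\log(k/2\pi)+2p\alpha\log\alpha\to 0$. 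Choosing $s_0=s_0(k,p)>1$ small enough forces the displayed lower bound to exceed $1/2$ for $s\in (1,s_0)$, which rearranges into the claimed inequality.

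For part (iii), the plan is to reduce to Lemma \ref{lemma:basic_estimate}. The central step is to verify that, under hypotheses (iii-1) and (iii-2), each segment $I_i$ is contained in the corresponding component of $L_x\cap\bigl(\bigcup_j K_j\cup\bigcup_{i=m+1}^n B_i\bigr)$ meeting the adjacent endpoint of $L\cap B$. Given $y\in I_1\subset P\setminus B\subset [0,1]^2$, either $y\in S_\infty\subset \bigcup_j K_j$, or $y$ lies in some region $D_l$; hypothesis (iii-1) rules out $D_l\in\{B_1,\dots,B_m\}$ (each has $\diam\geq \diam(B)$), and (iii-2) rules out any $D_l$ meeting $\partial([0,1]^2)$, leaving only $D_l=B_i$ with $i>m$, which lies in the cover. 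Hence $I_1$ is contained in a single component of the intersection, and because it is adjacent to the lower endpoint $p$ of $L\cap B$, that component is $I(1)$; symmetrically $I_2\subset I(w)$. The case $I(1)=I(w)$ yields $g_m(x)=g_{m-1}(x)$ directly from Lemma \ref{lemma:basic_estimate}(i). In the case $I(1)\neq I(w)$, Lemma \ref{lemma:basic_estimate}(iii) applied with the subsegments $I_1,I_2$, combined with the identity $I_1\cup(L\cap B)\cup I_2=L\cap P$ and part (ii), yields the asserted lower bound.

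The main obstacle is the geometric bookkeeping in part (iii): one must reconcile the combinatorial notion of ``component'' coming from the polygon $P\setminus B$ with the one coming from the open cover of $S_m$, and hypotheses (iii-1) and (iii-2) exist precisely to force this identification. The asymptotic analysis in (ii), by contrast, is essentially routine once the normalization by $\length(\partial B)$ is in place.
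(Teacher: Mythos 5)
Your proposal is correct and follows essentially the same route as the paper: part (i) via Lemma \ref{lemma:arc_tangent}(i) applied to the tangency point $z_1$ of the edge met by $I_i$, with the strip hypothesis bounding the relevant arc length from below by $\lambda\length(\partial B)$; part (ii) by normalizing and letting $s\to 1^+$ with $\lambda=(s-1)^p$; and part (iii) by showing (iii-1) and (iii-2) force $I_1\subset I(1)$, $I_2\subset I(w)$ and then invoking Lemma \ref{lemma:basic_estimate}. The only cosmetic difference is that in (i) you route the arc-length lower bound through monotonicity of the lower arc, whereas the chord bound $\length(\partial B|[p,z_1])\geq |p-z_1|\geq |x(p)-x(z_1)|$ already suffices (and is what the paper uses).
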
 

See Figure \ref{figure:construction_P} for an illustration of the intersection $L\cap (P\setminus B)$ and Figure \ref{figure:basic_estimate} for the intersection $L_x\cap (\bigcup_{j=1}^p K_j \cup \bigcup_{i={m+1}}^n B_i)$ that appears in part (iii).

\begin{proof}
By assumption, the curvature of $\partial B$ is bounded below by $k/\length(\partial B)$. Note that the conclusion of part (i) is scale invariant. Hence, by scaling, we may assume that $\length(\partial B)=1$ and that the curvature of $\partial B$ is bounded below by $k>0$. Since the vertical line segment $\br{I_i}$ does not intersect the set $Z(P)$, it follows that $\br{I_i}$ intersects an edge $E$ of the polygon $P$ at a point $w_1$ that is not a point of tangency of $E$ with $\partial B$. The edge $E$ is tangent to $\partial B$ at a point $z_1=\gamma(t_1)\in Z(P)$. Hence, $E$ is contained in the tangent line $L(z_1)$ of $\partial B$ at $z_1$ and $w_1\in L(z_1)$.  We let $z_2\in \partial B$ be the point of intersection of $\br{I_i}$ with $\partial B$; see Figure \ref{figure:construction_P}. By Lemma \ref{lemma:arc_tangent} (i) we have
\begin{align*}
\diam(I_i)=|z_2-w_1|\geq \dist(z_2,L(z_1)) \geq \frac{k}{2\pi} \length(\partial B|[z_1,z_2])^2 \geq \frac{k}{2\pi}|z_1-z_2|^2. 
\end{align*}
By assumption, $\re(z_2-z_1) \geq \lambda$, so the desired conclusion follows.

Next we prove the second part of the lemma. Again, since the statement is scale invariant, we may assume that $\length(\partial B)=1$. Note that $\length(L\cap P)\leq \diam(B)\leq \length(\partial B)=1$, since $P$ is contained in the circumscribing rectangle $R$. By the first part of the lemma we have
\begin{align*}
\length(I_1)^{s-1}+\length(I_2)^{s-1}- \length(L\cap P)^{s-1} \geq  2(k/2\pi)^{s-1}\lambda^{2s-2}-1.
\end{align*}
If we set $\lambda=(s-1)^p$, then  $2(k/2\pi)^{s-1}\lambda^{2s-2}-1\to 1$ as $s\to 1^+$. The conclusion follows. 

For (iii), we note first that the segments $I_i$, $i=1,2$, are contained in $\bigcup_{j=1}^p K_j\cup\bigcup_{i=m+1}^n B_i$. Indeed, by assumption (iii-2) we have $I_i\subset \bigcup_{j=1}^p K_j\cup\bigcup_{l=1}^n B_l$ since $I_i$ does not intersect any region $D_l$ that intersects the boundary of the unit square. Moreover, by (iii-1), $I_i$ does not intersect any region $D_l$ with diameter larger than or equal to the diameter of $B$. This implies that $I_i\cap B_l=\emptyset$ for $l\leq m$; recall the enumeration of the regions $B_l$, $l\in \N$, by decreasing diameters. 

If $I_1$ and $I_2$ lie in the same component of $L_x\cap (\bigcup_{j=1}^p K_j \cup \bigcup_{i={m+1}}^n B_i)$, then we have $I(1)=I(w)$, using the notation of Lemma \ref{lemma:basic_estimate}, and $g_m(x)=g_{m-1}(x)$ by part (i) of the aforementioned lemma. Otherwise, by Lemma \ref{lemma:basic_estimate} (iii), and using part (ii) of the current lemma, we conclude that 
$$g_m(x)-g_{m-1}(x)\geq \frac{1}{2} \length(\partial B)^{s-1}.$$
The proof is complete.
\end{proof}

Now, we construct a polygonal region $P$  that circumscribes $B$ such that $P\setminus B$ does not intersect any regions $D_i$ with diameter larger than or equal to the diameter of $B$; in this case the assumption (iii-1) of the previous lemma is always satisfied. By Lemma \ref{lemma:diameters_to_zero} (iii) there exists a positive integer $N_0$, depending only on $k$, such that if $D_i$, $i\in I_R$, is the family of regions intersecting the rectangle $R$ and having diameter larger than or equal to $\diam(B)$, then $\# I_R\leq N_0$. Since all regions $D_i$ are convex, for each $i\in I_R$ there exist a line $L_i$ tangent to $\partial B$ that separates $D_i$ from $B$; this follows from Minkowski's hyperplane separation theorem. The lines $L_i$, $i\in I_R$, together with the sides of the rectangle $R$ define a polygon $P$ with at most $N_0+4$ sides that circumscribes $B$; see Figure \ref{figure:construction_P}. Note that the cardinality of the set $Z(P)$ is bounded by $N_0+4\eqqcolon N$.

\bigskip

\subsubsection{Refinement of the polygon $P$}\label{section:polygon_tilde}
Note that the estimate from part (iii) of Lemma \ref{lemma:polygon_good_estimate}, if it were true for all $x\in [0,1]$, would imply the desired main estimate \eqref{theorem:main_estimate}---in fact, a stronger version of it without the subtracted term. 

In order to apply the favorable estimate from part (iii) of  Lemma \ref{lemma:polygon_good_estimate}, we will construct a smaller polygon $\widetilde P\subset P\subset R$ that circumscribes $B$ so that $I_1$ and $I_2$ (i.e., the components of $L\cap (\widetilde P\setminus B)$) do not intersect any regions $D_l$ with $D_l\cap \partial ([0,1]^2)\neq \emptyset$, whenever the vertical line $L$ does not intersect the $\lambda\length(\partial B)$-strip of $Z(R)$; recall that $Z(R)$ is the set points of tangency between $\partial R$ and $\partial B$. We remark that the assumptions of  Lemma \ref{lemma:polygon_good_estimate} require that the vertical line $L$ avoid a strip of $Z(\widetilde P)$, while here we are working with a strip of $Z(R)$, which is a subset of $Z(\widetilde P)$. In the next subsection, we will restrict to vertical lines avoiding a strip of $Z(\widetilde P)$ as well. 

We fix $\lambda\in (0,1)$. We consider the rectangle $\widetilde R \subset R$ whose top and bottom sides are at distance $(k/2\pi)\lambda^2 \length(\partial B)$ from the top and bottom sides of $R$ and whose left and right sides are at distance $\lambda\length(\partial B)$ from the left and right sides of $R$, respectively; see Figure \ref{figure:polygon_PR}. Note that by Lemma \ref{lemma:arc_tangent} (ii) the length of the horizontal and vertical sides of $R$ is at least $c\length(\partial B)$ for some $c>0$ depending only on $k$. Hence, if $\lambda$ is sufficiently small, depending only on $k$, then the construction of $\widetilde R$ is possible. 

By Lemma \ref{lemma:polygon_good_estimate} (i), applied to the polygon $R$, we know that $R\setminus \widetilde R$ does not intersect $B$, except at the $\lambda\length(\partial B)$-strip  of $Z(R)$, which we denote by $S$. The strip $S$ has at most $4$ components. Therefore, 
$$(R\setminus \widetilde R )\setminus S $$
consists of a uniformly bounded number of rectangles (in fact, at most $6$), none of which intersects $B$; see Figure \ref{figure:polygon_PR}. We separate each of these rectangles from $B$ with a line tangent to $\partial B$, thus creating a new convex polygon $P'\subset P$ that circumscribes $B$ and has a uniformly bounded, say by $N'\in \N$, number of edges. We note that $P'\subset \widetilde R \cup S$. See Figure \ref{figure:polygon_Ptilde}.

\begin{figure}
\centering
\begin{minipage}{0.49\linewidth}
	\centering
	\begin{overpic}[width=0.9\linewidth]{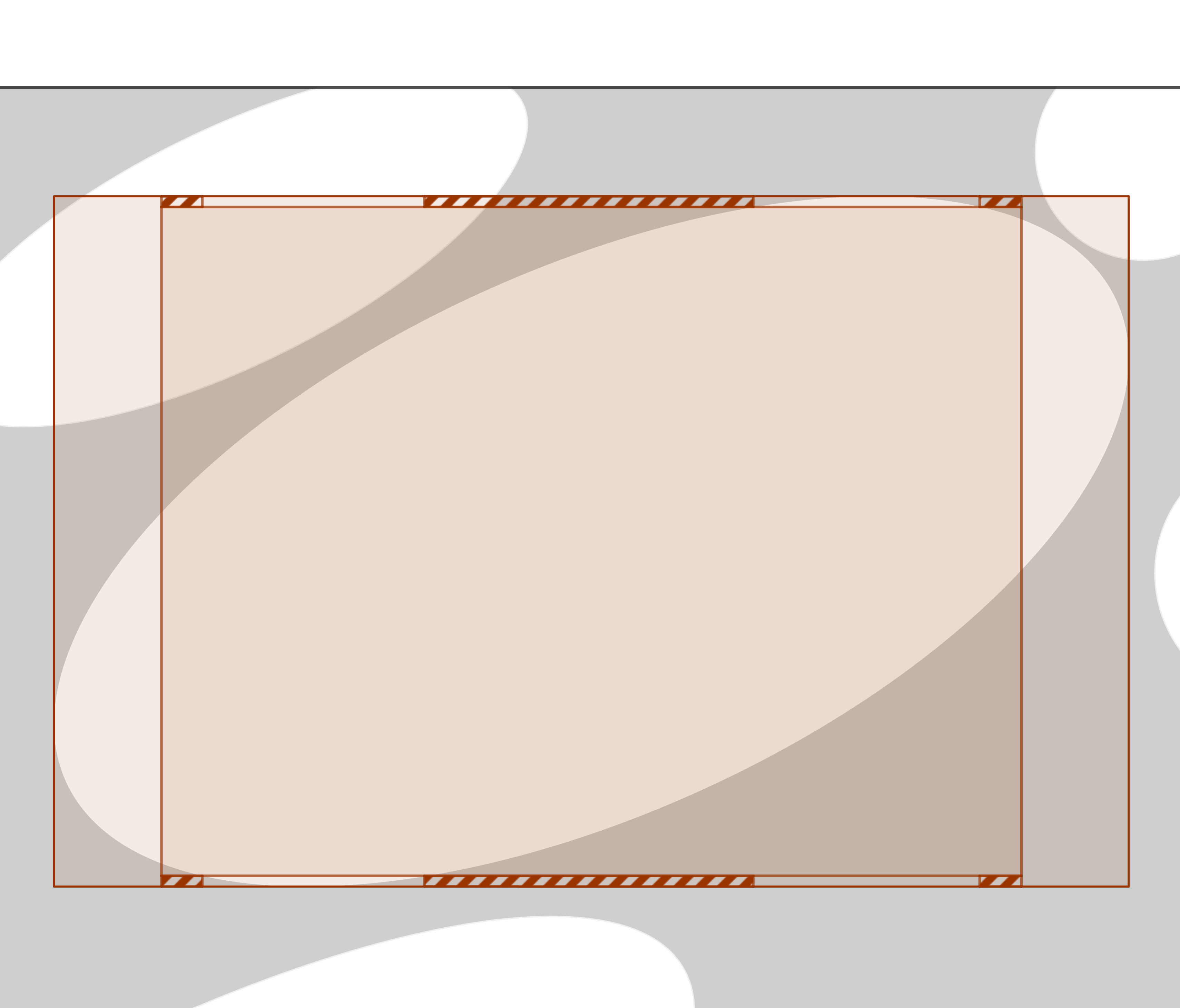}
		\put (2,83) {$\partial([0,1]^2)$}
		\put (48,34) {$B$}
		\put (14, 16) {$\widetilde{R}$}
		\put (5, 12) {$R$}
	\end{overpic}
\end{minipage}
\begin{minipage}{0.49\linewidth}
	\centering
	\begin{overpic}[width=0.9\linewidth]{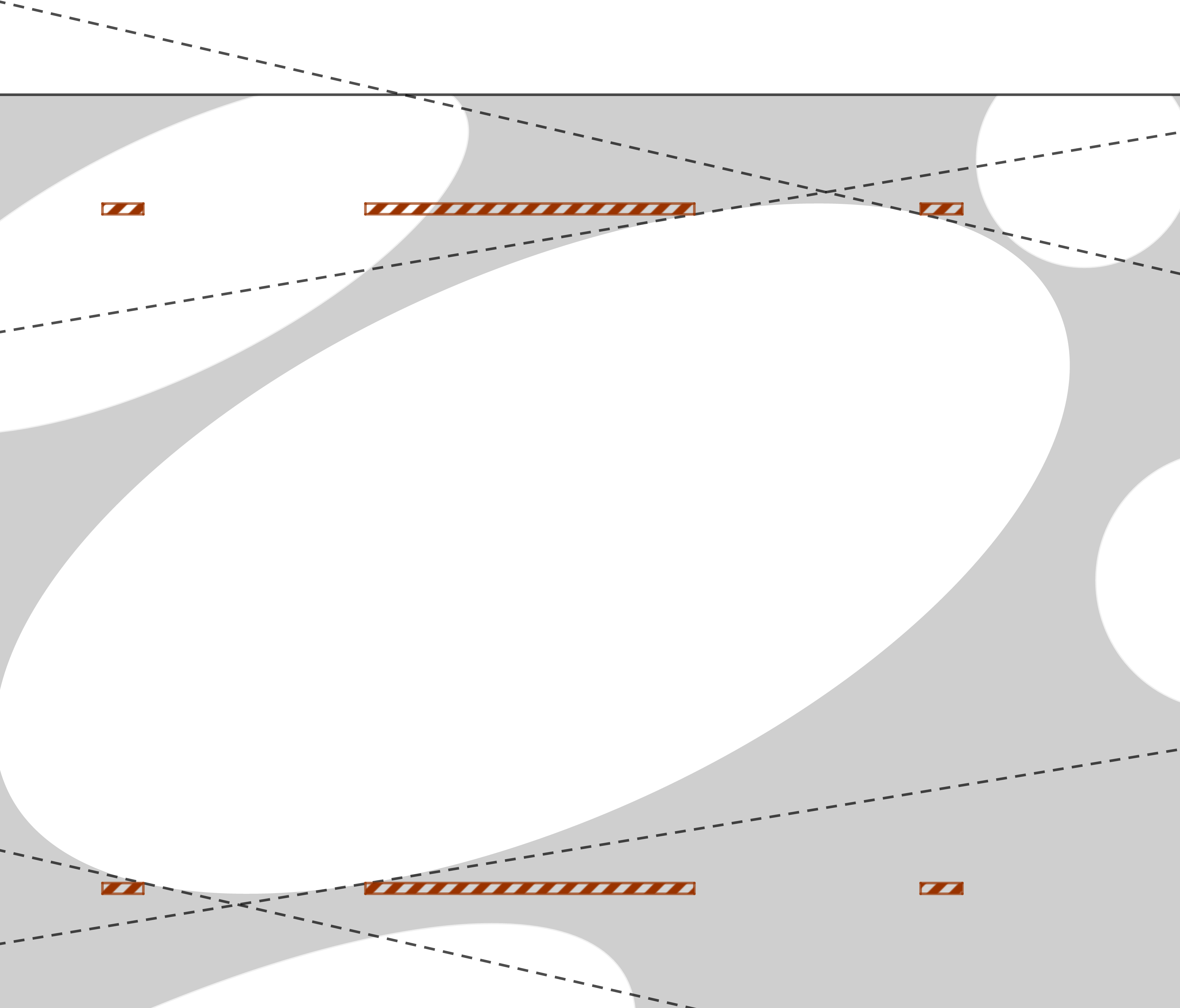}
		\put (2,81) {$\partial([0,1]^2)$}
		\put (42,36) {$B$}
	\end{overpic}
\end{minipage}
\caption{The polygon $P$ is assumed in this figure, for simplicity, to be the circumscribing rectangle $R$. Thus, all regions $D_l\neq B$ that intersect $R$ already have diameter smaller than $\diam(B)$. The strip $S$ has four components in this case. The rectangle $\widetilde R\subset R$ is illustrated and the set $(R\setminus \widetilde R)\setminus S$ consists of the six striped rectangles. In the right figure, these rectangles are separated from $B$ by tangent lines.}\label{figure:polygon_PR}
\vspace{1em}
	
\begin{minipage}{0.32\linewidth}
\centering
	\begin{overpic}[width=0.9\linewidth]{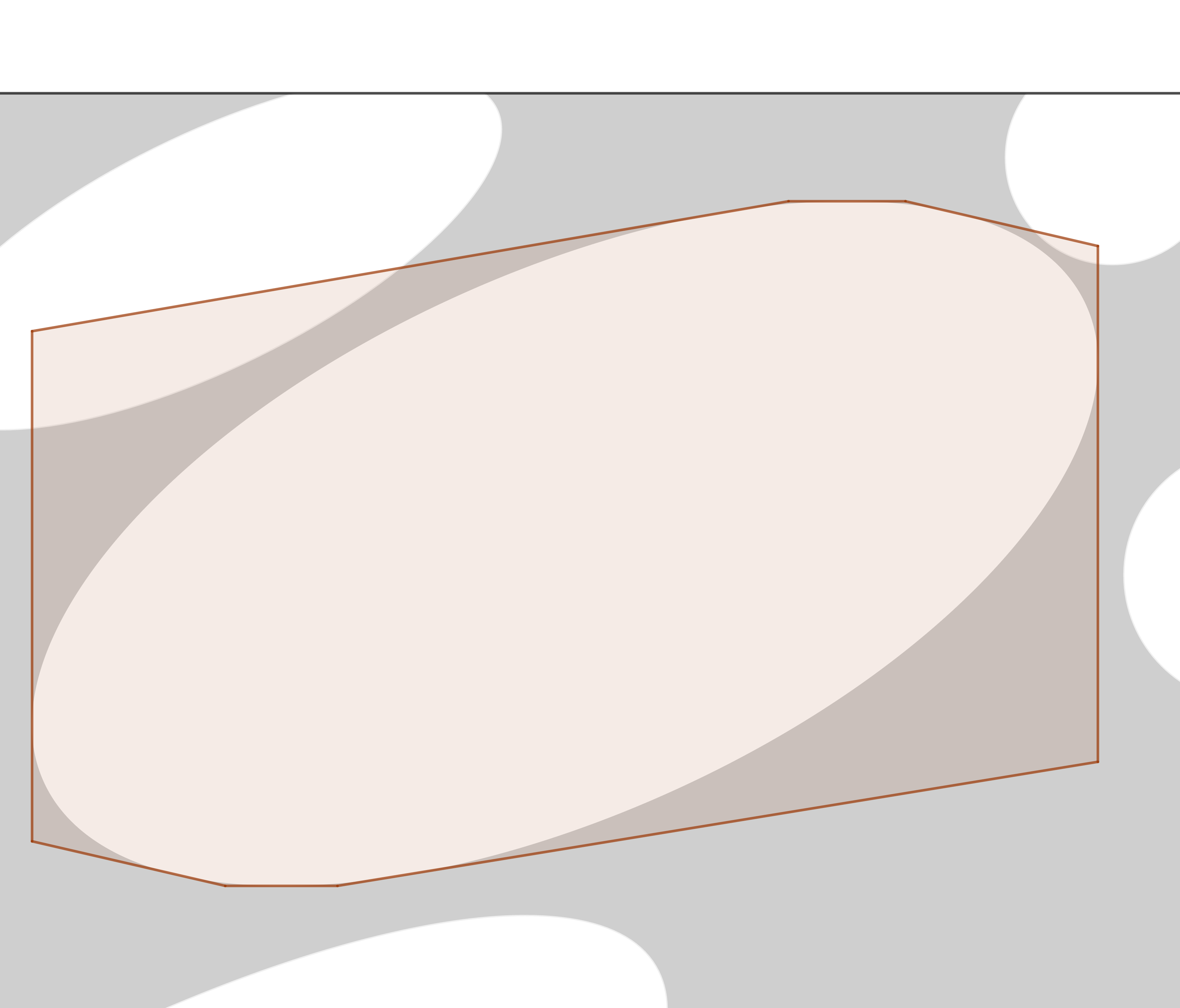}
		\put (2,83) {$\partial([0,1]^2)$}
		\put (46,36) {$P^\prime$}
	\end{overpic}
\end{minipage}
\begin{minipage}{0.33\linewidth}
\centering
	\begin{overpic}[width=.9\linewidth]{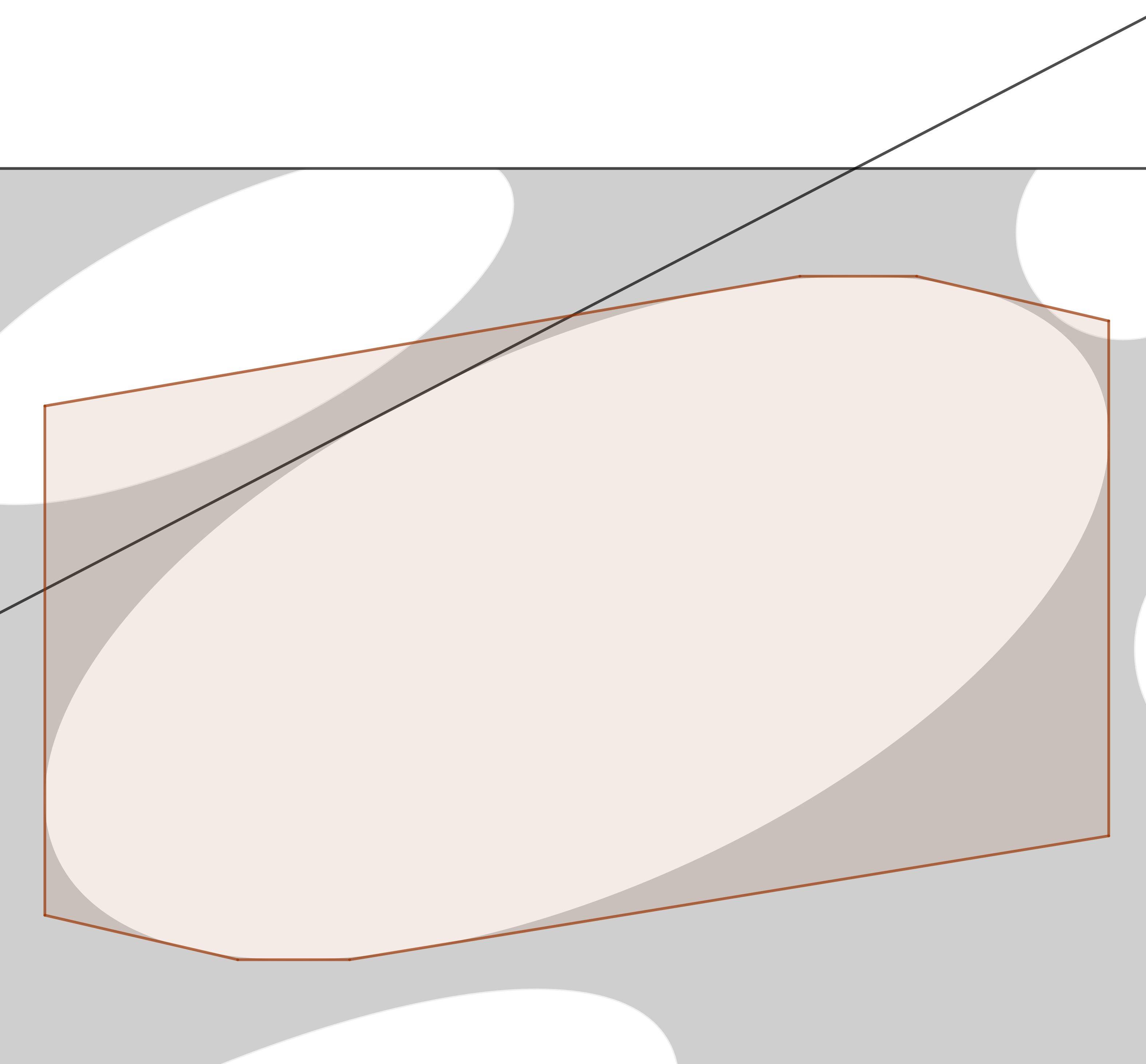}
		\put (2,83) {$\partial([0,1]^2)$}
		\put (48,36) {$P^\prime$}
		\put (16,64) {$D_l$}
	\end{overpic}
\end{minipage}
\begin{minipage}{0.33\linewidth}
\centering
	\begin{overpic}[width=.9\linewidth]{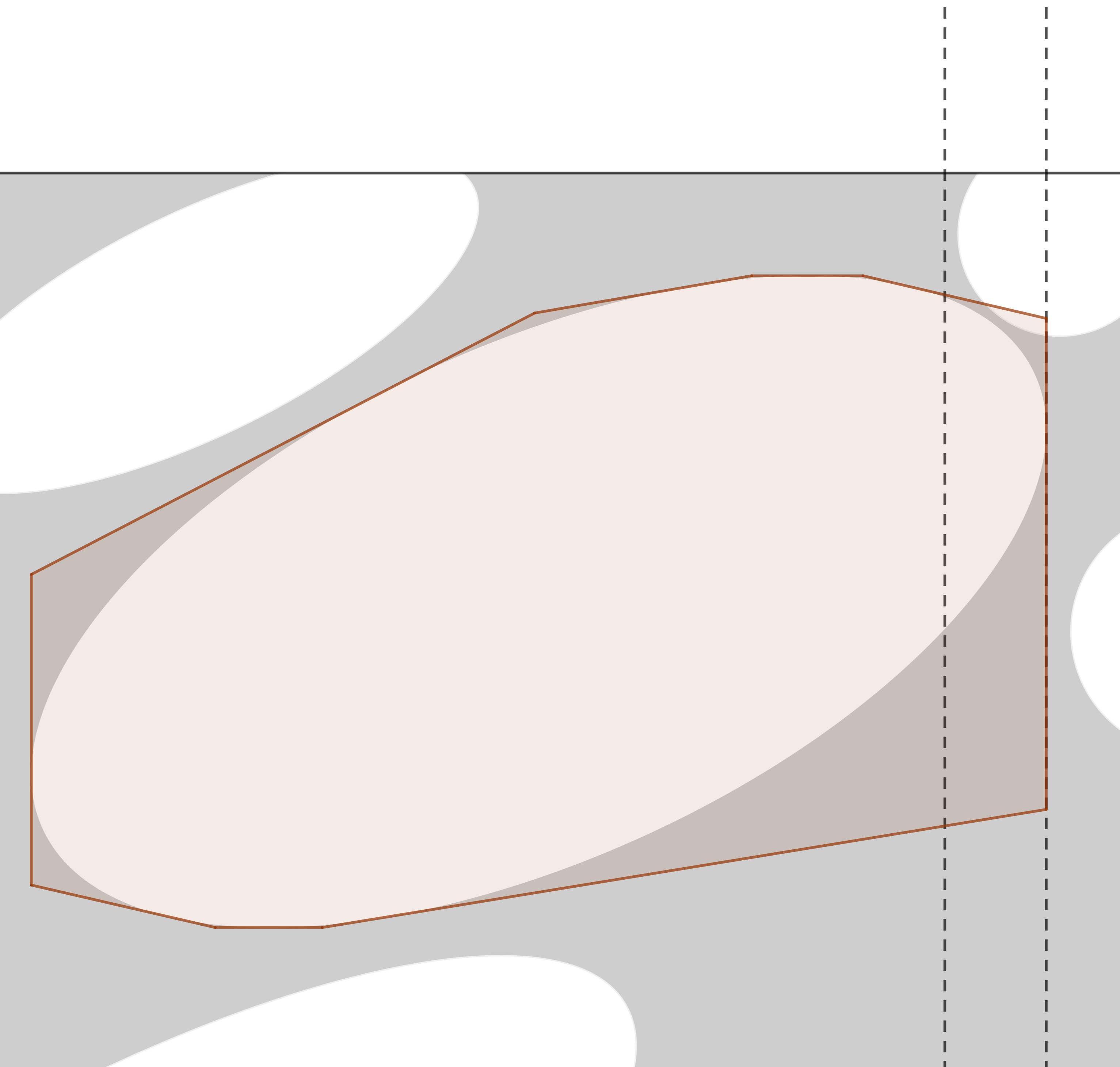}
		\put (2,84) {$\partial([0,1]^2)$}
		\put (46,38) {$\widetilde{P}$}
		\put (94,70) {$D_{l'}$}
	\end{overpic}
\end{minipage}
\caption{Left figure: The polygon $P'\subset P$. Middle figure: The region $D_l$ intersects both $\partial ([0,1]^2)$ and $R$, and the intersection $D_l\cap R$ is not contained in the strip $S$. Thus, we separate $D_l$ from $B$ by a tangent line. Right figure: The polygon $\widetilde P$. Note that there exists a region $D_{l'}$ intersecting $\partial ([0,1]^2)$ and $\widetilde P$. However, the intersection $D_{l'}\cap \widetilde P$ is contained in the strip $S$ and we can neglect it.}\label{figure:polygon_Ptilde}
\end{figure}

If $D_l$ is a region intersecting $\partial ([0,1]^2)$ and $P'$, then it has to intersect $\partial R$, since $P'\subset R\subset [0,1]^2$. Moreover, if $D_l\cap R$ is not entirely contained in the strip $S$, then it must intersect $\widetilde R$ and $\partial \widetilde R$, since $P' \subset \widetilde R\cup S$. Therefore, $$\diam(D_l\cap (R\setminus \widetilde R) )\geq \min\{ (k/2\pi)\lambda^2,\lambda\} \length(\partial B).$$
If $\lambda$ is sufficiently small, depending on $k$, then $(k/2\pi)\lambda^2 \leq \lambda$ and 
$$\diam(D_l\cap (\widetilde R\setminus R))\geq (k/2\pi)\lambda^2\length(\partial B).$$
Consider a ball $B(z,r)$ centered at some point $z\in \partial \widetilde R\cap D_l$ and with radius $r=(k/2\pi)\lambda^2\length(\partial B)$. By the Ahlfors regularity of $D_l$, we have 
\begin{align*}
\area(D_l\cap B(z,r)) \geq M\lambda^4 \length(\partial B)^2
\end{align*}
where $M>0$ and depends only on $k$. Note that the intersection $D_l\cap B(z,r)$ is contained in the $r$-neighborhood of $\widetilde R$, whose area is bounded above by 
$$\length(\partial \widetilde R)cr\leq \length(\partial R) cr \leq c'\diam(B)r \leq c'' \lambda^2 \length(\partial B)^2,$$
where $c''>0$ and depends only on $k$. We conclude that the number of regions $D_l$ that intersect $\partial ([0,1]^2)$ and $P'$ and are not contained in the strip $S$ is bounded above by $c'''\lambda^{-2}$, 
where $c'''>0$ and depends only on $k$.

We now construct a polygon $\widetilde P \subset P'$ that circumscribes $B$ by separating each of these regions $D_l$ from $B$ with a line tangent to $\partial B$; see Figure \ref{figure:polygon_Ptilde}. By construction, we have  
\begin{align}\label{theorem:zp_tilde}
\# Z(\widetilde P) \leq Z(P')+ c'''\lambda^{-2}\leq c\lambda^{-2}
\end{align}
where $c>0$ is a constant depending only on $k$. This completes the construction of the polygon $\widetilde P$. 

Summarizing, by construction, if $L$ is a vertical line with $L\cap B\neq \emptyset$ and $L\cap S=\emptyset$, where $S$ is the $\lambda\length(\partial B)$-strip of $Z(R)$, then $L\cap \widetilde P$ does not intersect any region $D_l$ with $D_l\cap \partial ([0,1]^2)\neq \emptyset$.

\bigskip

\subsection{Relative position between $B$ and the squares $K_j$}
In this final subsection we complete the proof of Theorem \ref{theorem:main}. We will study two main cases for the relative position and size of the region $B$ and of the squares $K_j$. We first establish an auxiliary lemma.

\begin{lemma}\label{lemma:midpoint}
Let $L=L_x$ be a vertical line with $L\cap B\neq \emptyset$ such that $L$ does not intersect the $\lambda\length(\partial B)$-strip of $Z(R)$, where $\lambda\in (0,1)$. Then there exists a constant $c_1>0$ depending only on $k$ and a point $z_1\in L\cap B$ such that
\begin{align*}
\dist(z_1,\partial B) \geq c_1\lambda^2 \length(\partial B).
\end{align*} 
\end{lemma}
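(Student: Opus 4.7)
The plan is to take $z_1$ to be the midpoint of the vertical chord $L \cap B$ and then exhibit an inscribed convex quadrilateral in $\overline{B}$ around $z_1$ whose half-diagonals exceed a constant times $\lambda^2 \length(\partial B)$. Write $L \cap B = [z_b, z_t]$ with $z_b, z_t \in \partial B$ and $\im(z_b) < \im(z_t)$; set $y_0 = (\im(z_b) + \im(z_t))/2$ and $z_1 = (x, y_0)$.

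The first step is a linear-in-$\lambda$ lower bound $h(x) \coloneqq \im(z_t) - \im(z_b) \geq c_0 \lambda \length(\partial B)$ for some $c_0 = c_0(k) > 0$. By convexity of $B$, the function $h$ is concave on $[\re(z_\ell), \re(z_r)]$ and vanishes at both endpoints, and its maximum $h(x^*)$ satisfies $h(x^*) \cdot W \geq \area(B)$ with $W \coloneqq \re(z_r) - \re(z_\ell)$. Combining Theorem \ref{theorem:chordarc} (so $\partial B$ is chord-arc with constant depending only on $k$, hence $\diam(B) \gtrsim \length(\partial B)$) with Lemma \ref{lemma:chordarc_area} (so $B$ is Ahlfors $2$-regular with constant depending only on $k$, hence $\area(B) \gtrsim \diam(B)^2$), together with the elementary fact that a convex region has perimeter at least $2\diam$ so $W \leq \length(\partial B)/2$, yields $h(x^*) \gtrsim \length(\partial B)$. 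Since $L$ avoids the $\lambda \length(\partial B)$-strip of $Z(R) \supset \{z_\ell, z_r\}$, we have $x \in [\re(z_\ell) + \lambda \length(\partial B), \re(z_r) - \lambda \length(\partial B)]$, and concavity of $h$ then gives $h(x) \geq (\lambda \length(\partial B)/W) h(x^*) \geq c_0 \lambda \length(\partial B)$.

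By the choice of $z_1$ as the midpoint of $[z_b, z_t]$, $|z_1 - z_b| = |z_1 - z_t| = h(x)/2 \gtrsim \lambda \length(\partial B)$. To control horizontal distances, let $w_\ell = (\xi_\ell(y_0), y_0)$ be the leftmost point of $B$ on the horizontal line at height $y_0$; by strict convexity of $\partial B$, the tangent to $\partial B$ at $w_\ell$ is vertical. Applying Lemma \ref{lemma:arc_tangent}(i) with $z_2 = w_\ell$ and the point $z_b$ gives
\begin{align*}
x - \xi_\ell(y_0) \geq \frac{k}{2\pi} \cdot \frac{|z_b - w_\ell|^2}{\length(\partial B)} \geq \frac{k}{2\pi} \cdot \frac{(h(x)/2)^2}{\length(\partial B)} \gtrsim \lambda^2 \length(\partial B),
\end{align*}
and symmetrically $\xi_r(y_0) - x \gtrsim \lambda^2 \length(\partial B)$, where $w_r = (\xi_r(y_0), y_0)$. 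By convexity of $B$, the convex quadrilateral with vertices $w_\ell, z_b, w_r, z_t$ is contained in $\overline{B}$, and $z_1$ lies at the intersection of its two perpendicular diagonals. A direct computation---the distance from this intersection to a side of such a quadrilateral with half-diagonals $\alpha, \beta$ equals $\alpha\beta/\sqrt{\alpha^2+\beta^2} \geq \min(\alpha, \beta)/\sqrt{2}$---then yields $\dist(z_1, \partial B) \gtrsim \lambda^2 \length(\partial B)$.

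The main obstacle is the chord-length estimate of the first step: no single tool from Section \ref{section:preliminaries} delivers it, and we must stitch together the chord-arc property, the Ahlfors regularity (i.e., an area lower bound), and the concavity of $h$. Once this linear-in-$\lambda$ bound on the vertical chord is secured, the quadratic-in-$\lambda$ bound of the lemma follows from one further application of Lemma \ref{lemma:arc_tangent}(i) at the vertical tangent at $w_\ell$.
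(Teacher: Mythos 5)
Your overall strategy is sound and genuinely different from the paper's (which places $z_1$ on the chord joining the leftmost and rightmost points of $\partial B$ and compares distances to the tangent line at the boundary point nearest to $z_1$), and your first step---the linear lower bound $h(x)\gtrsim\lambda\length(\partial B)$ on the vertical chord via concavity of $h$, Ahlfors $2$-regularity, and the chord-arc property---is correct. However, one claim you rely on is false: the tangent to $\partial B$ at $w_\ell=(\xi_\ell(y_0),y_0)$, the left endpoint of the \emph{horizontal} chord at height $y_0$, is not vertical in general. Strict convexity forces a vertical tangent only at the leftmost and rightmost points of $B$; for a round disk and a height $y_0$ away from the center, the tangent at the left endpoint of the horizontal chord is oblique. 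Consequently, Lemma \ref{lemma:arc_tangent}(i) only bounds $\dist(z_b,L(w_\ell))$, the distance to that oblique tangent line, and you cannot identify this quantity with the horizontal distance $x-\xi_\ell(y_0)$ as written.

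The step is repairable without the vertical-tangent claim. Since $w_\ell\in L(w_\ell)$ and $z_1$ lies at the same height as $w_\ell$, you have $x-\xi_\ell(y_0)=|z_1-w_\ell|\geq\dist(z_1,L(w_\ell))$. The signed distance to the supporting line $L(w_\ell)$ is affine and nonnegative on $\overline B$, and $z_1$ is the midpoint of $[z_b,z_t]$ with both endpoints in $\overline B$, so $\dist(z_1,L(w_\ell))\geq\tfrac12\dist(z_b,L(w_\ell))$. Now Lemma \ref{lemma:arc_tangent}(i) gives $\dist(z_b,L(w_\ell))\geq\frac{k}{2\pi}|z_b-w_\ell|^2/\length(\partial B)\geq\frac{k}{8\pi}h(x)^2/\length(\partial B)\gtrsim\lambda^2\length(\partial B)$, using $|z_b-w_\ell|\geq|\im(z_b)-y_0|=h(x)/2$. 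With this correction the concluding quadrilateral argument (which is fine: the distance from the intersection of perpendicular diagonals to the sides is $\alpha\beta/\sqrt{\alpha^2+\beta^2}$) closes the proof. For comparison, the paper's argument is shorter: it does not need the chord-length estimate at all, since taking $z_1$ on the segment $[w_l,w_r]$ joining the two extreme points of $Z(R)$ lets one apply Lemma \ref{lemma:arc_tangent}(i) once, to the tangent line at the point of $\partial B$ nearest to $z_1$, with the strip condition entering only through a dichotomy on whether that nearest point is horizontally close to $w_l$ or $w_r$.
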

In fact, the conclusion holds with $\lambda$ in the place of $\lambda^2$, but for our purposes this statement is enough.

\begin{proof}Without loss of generality, we suppose that $\length(\partial B)=1$. Let $L$ be a vertical line as in the statement. Consider the line $L'$ joining the leftmost and rightmost points $w_l$ and $w_r$ of $\partial B$, respectively. Note that $w_l$ and $w_r$ are points of tangency between $\partial B$ and $\partial R$ and hence they are contained in $Z(R)$. We claim that the point $z_1$ lying in the intersection of the lines $L$ and $L'$ satisfies the conclusion. By the strict convexity of $B$, the point $z_1$ lies necessarily in $B$.

Let $z_2$ be a point in $\partial B$ such that $\dist(z_1,\partial B)=|z_1-z_2|$ and consider the tangent line $L(z_2)$ of $\partial B$ at $z_2$. We note that $|z_1-z_2|=\dist(z_1,L(z_2))$. Since $z_1,w_l,w_r$ lie on the same line, it follows that the distance $\dist(z_1,L(z_2))$ is bounded below by either $\dist(w_l,L(z_2))$ or $\dist(w_r,L(z_2))$. Without loss of generality, we assume that $|z_1-z_2| \geq \dist(w_l, L(z_2))$. By Lemma \ref{lemma:arc_tangent} (i), there exists a constant $c>0$ depending only on $k$ such that $\dist(w_l,L(z_2)) \geq c \length(\partial B|[w_l,z_2])^2$.

If $z_2$ does not lie in the $(\lambda/2)$-strip of $w_l$, then $\length(\partial B|[w_l,z_2]) \geq  \lambda/2$. Therefore, $\dist(z_1,\partial B) \geq c\lambda^2/4$. If $z_2$ lies in the $(\lambda/2)$-strip of $w_l$, then $|z_1-z_2|\geq |\re(z_1-z_2)| \geq \lambda/2$, since $z_1$ does not lie in the $\lambda$-strip of $w_l$. Moreover, $\lambda\geq \lambda^2$ since $\lambda<1$. Summarizing, if we set $c_1= \min \{c/4, 1/2\}$, then we have $\dist(z_1,\partial B)\geq c_1\lambda^2$, as desired.
\end{proof}

Let $L=L_x$ be a line with $L\cap B\neq \emptyset$ such that $L$ does not intersect the set $Z(\widetilde P)$ or the $\lambda\length(\partial B)$-strip of the set $Z(R)$. Then $L\cap(\widetilde P\setminus B)$ has two components, $I_1$ and $I_2$.  By the construction of the polygon $\widetilde P$,  $I_1$ and $I_2$ do not intersect any region $D_l$ that intersects $\partial([0,1]^2)$, so the assumption (iii-2) of Lemma \ref{lemma:polygon_good_estimate} holds. Moreover, the assumption (iii-1) also holds since $\widetilde P\subset P$ and $P$ was constructed so that (iii-1) holds. In order to be able to apply the estimate of part (iii) from Lemma \ref{lemma:polygon_good_estimate} we need to ensure that $I_1$ and $I_2$ lie in distinct components of $L_x\cap (\bigcup_{j=1}^p K_j \cup \bigcup_{i={m+1}}^n B_i)$. 

If $I_1$ and $I_2$ lie in the same component of $L_x\cap (\bigcup_{j=1}^p K_j \cup \bigcup_{i={m+1}}^n B_i)$, then this component contains $L\cap B$, so $L\cap B\subset \bigcup_{j=1}^p K_j$. By Lemma \ref{lemma:midpoint} there exists a point $z_1\in L\cap B$ such that $\dist(z_1,\partial B) \geq c_1\lambda^2 \length(\partial B)$. In particular, there exists a square $K_j$ such that $z_1\in K_j$. Recall that, by assumption, no square is contained in any region $D_i$, since these squares do not contribute to the estimation of the Hausdorff dimension; see the comments before \eqref{theorem:sum_diameter_bound} in Section \ref{section:initial_setup}. Therefore, $K_j$ intersects $\partial B$. It follows that 
\begin{align*}
\diam(K_j) \geq c_1\lambda^2 \length(\partial B).
\end{align*}

We set $\lambda=s-1 \in (0,1)$ and choose $s$ sufficiently close to $1$, depending only on $k$, so that all previous claims in the construction of $\widetilde P$ hold for that value of $\lambda$. We now consider two cases; see Figure \ref{figure:cases}.

\begin{figure}
\centering
\begin{minipage}{0.49\linewidth}
\centering
\begin{overpic}[width=.9\linewidth]{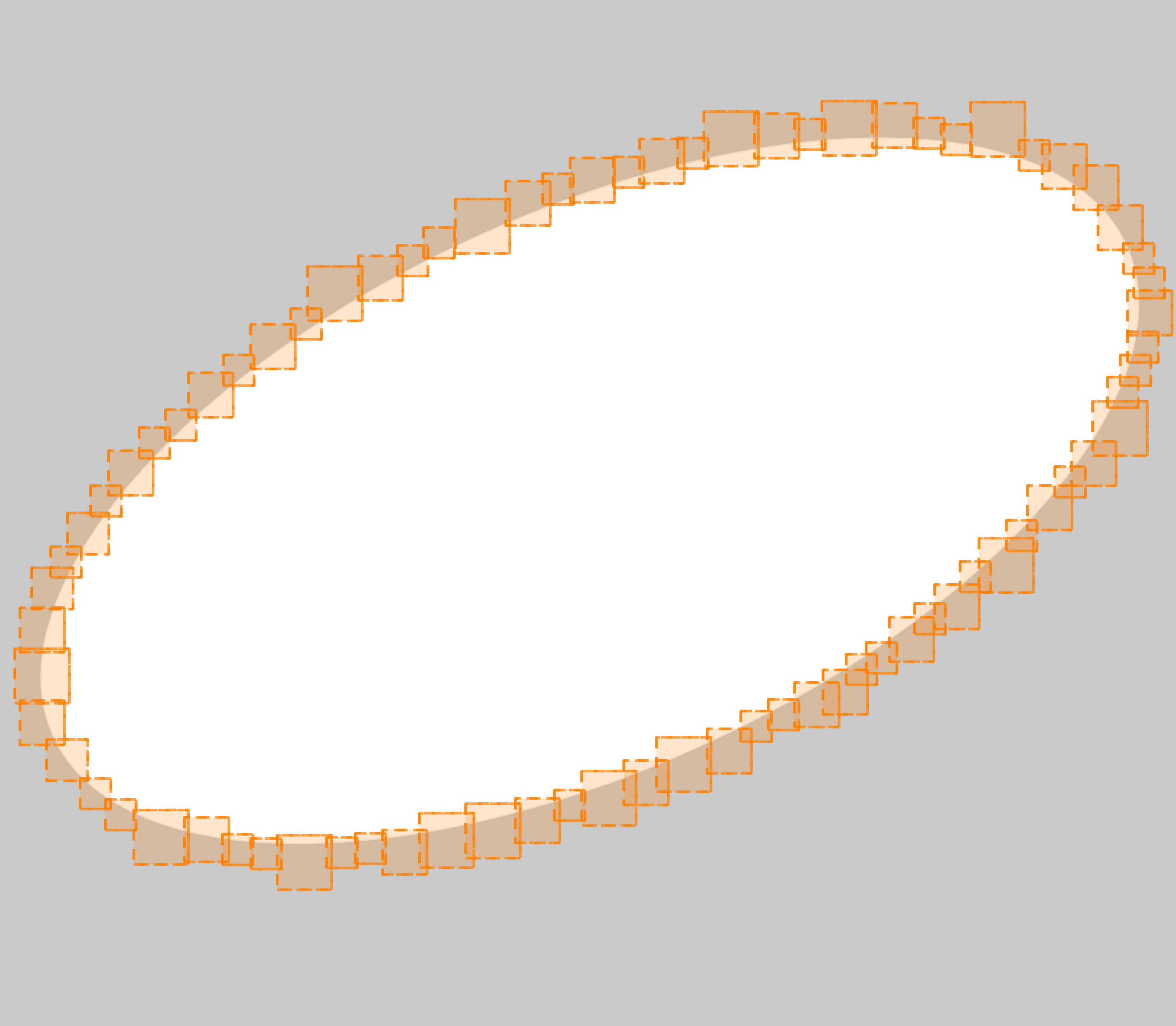}
		\put (47,43) {$B$}
	\end{overpic}
\end{minipage}
\begin{minipage}{0.49\linewidth}
\centering
	\begin{overpic}[width=.9\linewidth]{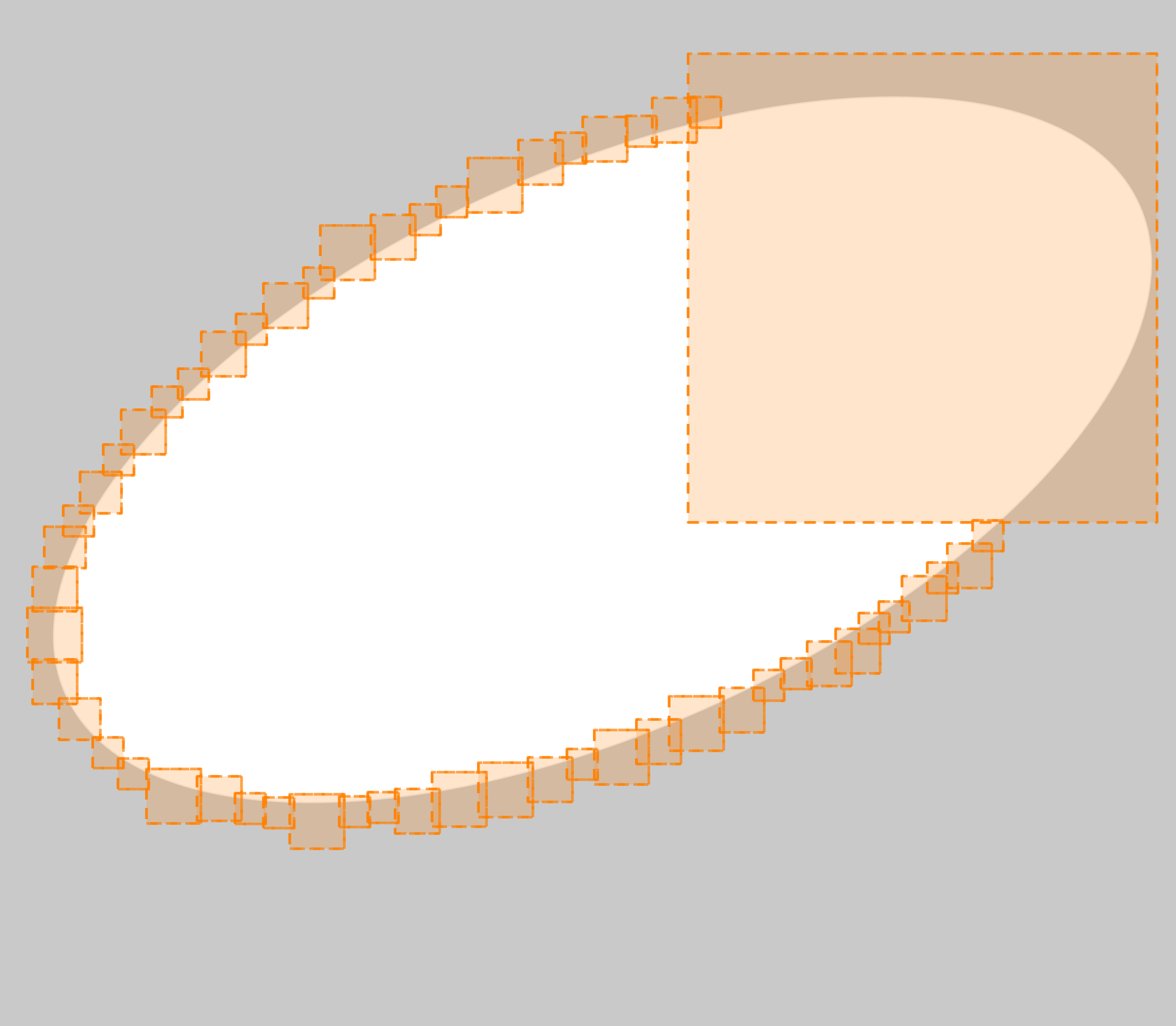}
		\put (47,43) {$B$}
		\put (76,60) {$K_j$}
	\end{overpic}
\end{minipage}
	\caption{Illustration of Case 1 on the left and Case 2 on the right. In Case 1 all squares intersecting $B$ are relatively small, while in Case 2 there exists a square $K_j$ with $\diam(K_j)\geq c_1\lambda^2\length(\partial B)$.}\label{figure:cases}
\end{figure}

\bigskip

\noindent
\textbf{Case 1.}
For each $j\in \{1,\dots,p\}$, if $K_j\cap B\neq \emptyset$, then $\diam(K_j) < c_1\lambda^2 \length(\partial B)$.

\bigskip
\noindent
By the previous comments, it follows that $I_1=I_1(x)$ and $I_2=I_2(x)$ lie in distinct components of $L_x\cap (\bigcup_{j=1}^p K_j \cup \bigcup_{i={m+1}}^n B_i)$, whenever $L_x$ is a line with $L_x\cap B\neq \emptyset$ and $L_x$ does not intersect $ Z(\widetilde P)$ or the $\lambda\length(\partial B)$-strip of $Z(R)$. Suppose, in addition, that  $L_x$ does not intersect the $\lambda^3\length(\partial B)$-strip of $ Z(\widetilde P)$. We denote by $\mathcal G$ the set of values of $x\in \proj(B)$ satisfying all these restrictions. If $x\in \mathcal G$, we can apply Lemma \ref{lemma:polygon_good_estimate} (iii) to the polygon $\widetilde P$ with $\lambda^3=(s-1)^3$ in the place of $\lambda$, and we obtain
\begin{align*}
g_m(x)-g_{m-1}(x) \geq \frac{1}{2} \length(\partial B)^{s-1}.
\end{align*}
for $s\in (1,s_0)$, where $s_0$ depends only on $k$. The set $\mathcal B=\proj(B)\setminus \mathcal G$, where the above estimate might fail, has measure at most
$$|\mathcal B|\leq  2\lambda\length(\partial B)\cdot \# Z(R) + 2\lambda^3\length(\partial B) \cdot \# Z(\widetilde P) \leq c\lambda \length(\partial B)$$
by \eqref{theorem:zp_tilde}, where $c>0$ is a constant depending only on $k$. When $x\in \mathcal B$, we use the crude estimate from Lemma \ref{lemma:basic_estimate} (ii):
$$g_m(x)-g_{m-1}(x) \geq -\length(L_x\cap B)^{s-1} \geq -\length(\partial B)^{s-1}.$$ 
By Lemma \ref{lemma:arc_tangent} (ii) we have {$|\proj(B)|\geq c'\length(\partial B)$}, where $c'$ depends only on $k$, thus
$$|\mathcal G| \geq (c'-c\lambda)\length(\partial B) \geq c'' \length(\partial B),$$
as soon as $\lambda$ is sufficiently small, depending only on $k$. Therefore,
\begin{align*}
\int_0^1(g_m-g_{m-1}) &= \int_{\mathcal G}(g_m-g_{m-1}) +\int_{\mathcal B}(g_m-g_{m-1})\\
&\geq (c''/2) \length(\partial B)^{s}- c\lambda \length(\partial B)^{s}\\
&=(c''/2-c\lambda) \length(\partial B)^{s}>0, 
\end{align*}
provided $\lambda=s-1$ is sufficiently small, depending on $k$. This completes the proof of \eqref{theorem:main_estimate} in this case. 

\bigskip
\noindent
\textbf{Case 2.} There exists $j\in \{1,\dots,p\}$ (depending on $B$) such that $K_j\cap B\neq \emptyset$ and $\diam(K_j)\geq c_1\lambda^2 \length(\partial B)$. 

\bigskip
\noindent
We note that if $B\cap \partial K_j=\emptyset$, then $B$ is entirely contained in $K_j$. In this case for each vertical line $L=L_x$ intersecting $B$ the segments $I(1)$ and $I(w)$, with the notation of Lemma \ref{lemma:basic_estimate}, coincide and we have $g_m(x)-g_{m-1}(x)=0$ for all $x\in \proj(B)$. Therefore,
\begin{align*}
\int_0^1 (g_m-g_{m-1})=0.
\end{align*}   
This proves the desired main estimate \eqref{theorem:main_estimate}. 

We now suppose that $B\cap \partial K_j \neq \emptyset$. Using the crude estimate from Lemma \ref{lemma:basic_estimate} (ii) we have
\begin{align*}
g_m(x)-g_{m-1}(x)\geq -\length(\partial B)^{s-1},
\end{align*}
so 
\begin{align*}
\int_0^1 (g_m-g_{m-1}) &\geq -\length(\partial B)^{s-1} |\proj (B)| \geq -\length(\partial B)^s\geq -c_2^{-s} \diam(B)^s,
\end{align*}
where the last inequality follows from the chord-arc property of $\partial B$ and $c_2>0$ is a constant depending only on $k$. This proves the desired main estimate \eqref{theorem:main_estimate} in this case too.  The proof of the main theorem has been completed. \qed

\bigskip

\section{Proof of Theorem \ref{theorem:relaxation}}\label{section:relaxation}

We only need to verify that all auxiliary lemmas from Section \ref{section:preliminaries} that were used in the proof of Theorem \ref{theorem:main} also hold under the less restrictive assumption of Theorem \ref{theorem:relaxation}. Moreover, throughout the entire proof of Theorem \ref{theorem:main}, instead of using tangent lines of a smooth and convex curve $C$, one has to use \textit{supporting lines}, i.e., lines intersecting $C$ and separating it from a half-plane.

For two compact sets $A,B\subset \C$ we define the \textit{Hausdorff metric} $d_H(A,B)$ to be the infimum of all $\varepsilon>0$ such that $A$ is contained in the open $\varepsilon$-neighborhood of $B$ and $B$ is contained in the open $\varepsilon$-neighborhood of $A$. We say that a sequence of compact sets $C_n$ converges to a compact set $C$ in the {Hausdorff metric} if $d_H(C,C_n)\to 0$ as $n\to\infty$. 

First we prove an auxiliary lemma.

\begin{lemma}\label{lemma:length_convergence}
Let $C_n$, $n\in \N$, be a sequence of planar convex Jordan curves converging in the Hausdorff metric to a Jordan curve $C$. Then $C$ is convex and the length of $C_n$ converges to the length of $C$. Moreover, for each arc $A\subset C$ with endpoints $z,w$ there exists a sequence of arcs $A_n\subset C_n$ with endpoints $z_n,w_n$, $n\in \N$, such that $z_n\to z$, $w_n\to w$, $A_n\to A$ in the Hausdorff metric, and $\length(A_n)\to \length(A)$ as $n\to\infty$.
\end{lemma}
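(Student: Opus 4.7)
The plan is to first handle the convexity and total length claims, then build the arcs $A_n$ using a radial parametrization, and finally deduce the arc length convergence from Gołąb's semicontinuity theorem combined with the total length convergence.

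I would begin by letting $D_n, D$ denote the Jordan regions bounded by $C_n, C$. Since each $C_n$ is convex, $\overline{D_n}=\operatorname{conv}(C_n)$. The convex-hull operation is continuous on compact sets under the Hausdorff metric, so $\overline{D_n}\to \operatorname{conv}(C)$ in the Hausdorff metric; since this limit is convex and its boundary must be $C$ (as $C$ is a Jordan curve), we conclude $\overline{D}=\operatorname{conv}(C)$ is convex, proving convexity of $C$. For the total length, Gołąb's theorem gives $\length(C)\leq \liminf \length(C_n)$ since $C,C_n$ are connected. Conversely, given $\varepsilon>0$ we have $\overline{D_n}\subset \overline{D}+\varepsilon\overline{B(0,1)}$ for large $n$, and monotonicity of perimeter for convex bodies gives $\length(C_n)\leq \length(C)+2\pi\varepsilon$. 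Letting $\varepsilon\to 0$ completes the proof that $\length(C_n)\to \length(C)$.

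For the arc construction I would fix a point $p$ interior to $D$; for large $n$, $p$ lies in the interior of $\overline{D_n}$, and each radial ray from $p$ meets $C_n$ (resp.\ $C$) in a single point by strict radial monotonicity of the convex body. Writing these intersections as $p+\rho_n(\theta)e^{i\theta}$ and $p+\rho(\theta)e^{i\theta}$, the functions $\rho_n,\rho$ are uniformly Lipschitz (with a constant depending on $\operatorname{dist}(p,C)$ and the diameters, which are uniformly bounded), and Hausdorff convergence forces pointwise convergence $\rho_n\to \rho$; by Arzel\`a--Ascoli this upgrades to uniform convergence. With $z=r(\alpha),w=r(\beta)$ the endpoints of $A$ and $A$ corresponding to $\theta\in [\alpha,\beta]$ (in the correct cyclic sense), I set $z_n=r_n(\alpha)$, $w_n=r_n(\beta)$, and $A_n=\{r_n(\theta):\theta\in[\alpha,\beta]\}$. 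Uniform convergence of parametrizations then gives $z_n\to z$, $w_n\to w$, and $A_n\to A$ in the Hausdorff metric.

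To obtain length convergence of $A_n\to A$, let $B_n\subset C_n$ and $B\subset C$ be the complementary arcs given by the same radial parametrization on $\theta\in[\beta,\alpha]$ (mod $2\pi$); by the same argument $B_n\to B$ Hausdorff. Gołąb's theorem applied to the connected sets $A_n$ and $B_n$ yields
\begin{align*}
\liminf_{n\to\infty} \length(A_n)\geq \length(A),\qquad \liminf_{n\to\infty}\length(B_n)\geq \length(B).
\end{align*}
Since $\length(A_n)+\length(B_n)=\length(C_n)\to \length(C)=\length(A)+\length(B)$, both inequalities must be equalities in the limit, proving $\length(A_n)\to \length(A)$.

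The main obstacle is the arc convergence step, which requires selecting the correct component of $C_n\setminus\{z_n,w_n\}$ (i.e.\ avoiding the wrong arc); the radial parametrization bypasses this cleanly by giving a canonical choice, but verifying uniform convergence of $\rho_n$ (rather than mere pointwise) is the technical heart of the argument. The length convergence for arcs is then a tidy consequence of the complementary-arc trick, which is needed because one does not have the monotonicity-of-perimeter upper bound available for proper subarcs.
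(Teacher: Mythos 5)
Your proposal is correct and follows essentially the same route as the paper: convexity from the Hausdorff limit, total length convergence via lower semicontinuity plus monotonicity of perimeter for nested convex bodies, a radial parametrization from an interior point (made uniform) to select the arcs $A_n$ canonically, and the complementary-arc trick combining semicontinuity with total length convergence to get $\length(A_n)\to\length(A)$. The only cosmetic differences are that the paper obtains the perimeter upper bound by comparing with dilated copies $C_n(\varepsilon)$ via Crofton's formula rather than outer parallel bodies, and derives uniform convergence of the radial parametrizations from a cited quantitative estimate rather than equi-Lipschitzness and Arzel\`a--Ascoli.
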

\begin{proof}
Convergence in the Hausdorff metric implies that for each $z\in C$ there exists a sequence $z_n\in C_n$ converging to $C$. A consequence of this observation is that $C$ has to be convex.  We denote by $D$ the Jordan region bounded by $C$ and by $D_n$ the region bounded by $C_n$.

Let $z_0\in D$. For $\varepsilon>0$ consider the convex Jordan curve $C(\varepsilon)= \{z_0+(1+\varepsilon)(z-z_0): z\in C \}$. By the convexity of $C$, the curve $C( \varepsilon)$ is disjoint from $C$ and $C$ is contained in the interior region of $C(\varepsilon)$, denoted by $D(\varepsilon)$. Moreover, $\length(C( \varepsilon))=(1+\varepsilon)\length(C)$. By compactness, for each $\varepsilon>0$ there exists $\delta>0$ such that $D(\varepsilon)$ contains the $\delta$-neighborhood of $C$. In particular, 
\begin{align}\label{hausdorff:containment1}
C_n\subset D(\varepsilon)
\end{align}
for all sufficiently large $n$. 

Let $B(z_0,r_0)$ be a ball such that $B(z_0,2r_0)$ is contained in $D$. We claim that the ball  $B(z_0,r_0)$ is contained in $D_n$ for all sufficiently large $n$. Suppose that this is not the case. Since $C_n$ converge to $C$, it has to be disjoint from $B(z_0,r_0)$ for all sufficiently large $n$. If $B(z_0,r_0)$ is not contained $D_n$, then there exists a line separating $D_n$ from a half-plane containing $B(z_0,r_0)$. In this case, no point of $C_n$ can approach points of $C$ that lie on that half-plane.  Hence, $B(z_0,r_0)$ is not contained in $D_n$ for at most finitely many $n$. The convergence of $C_n$ to $C$ also implies that there exists a large ball $B(z_0,R_0)$ that contains $D$ and $D_n$ for all $n\in \N$. Summarizing, we have
\begin{align*}
B(z_0,r_0) \subset D,D_n\subset B(z_0,R_0)
\end{align*}
for all $n\in \N$. According to \cite[Lemma 4.1, Claim 1]{BishopDrillickNtalampekos:Falconer}, this implies that there exists a constant $M=M(r_0,R_0)>0$ such that if a ray emanating from $z_0$ hits $C$ and $C_n$ at points $z$ and $w$, respectively, then
\begin{align}\label{hausdorff:inequality}
|z-w|\leq M d_H(C,C_n).
\end{align}

For $\varepsilon>0$ we consider the curve $C_n(\varepsilon)=\{z_0+(1+\varepsilon)(z-z_0): z\in C_n \}$ and denote by $D_n(\varepsilon)$ the Jordan region bounded by $C_n(\varepsilon)$. By \eqref{hausdorff:inequality}, if $Md_H(C,C_n)<\varepsilon$, then any ray emanating from $z_0$ hits first $C$ and then $C_n(\varepsilon)$. Hence, we conclude that 
\begin{align}\label{hausdorff:containment2}
C\subset D_n(\varepsilon)
\end{align}
for all sufficiently large $n$.

Summarizing, by \eqref{hausdorff:containment1} and \eqref{hausdorff:containment2}, for each $\varepsilon>0$ and for all sufficiently large $n$ we have $C_n\subset D(\varepsilon)$ and $C\subset D_n(\varepsilon)$. An application of Crofton's formula (see \cite[p.\ 38]{Tabachnikov:geometry}) shows that 
$$\length(C_n)\leq \length(C(\varepsilon)) \quad \textrm{and} \quad \length(C)\leq \length(C_n(\varepsilon))$$
for all sufficiently large $n$. Letting first $n\to\infty$ and then $\varepsilon\to 0$ shows that $\length(C_n)$ converges to $\length(C)$, as desired. This proves the first part of the lemma.

We now parametrize $C$ and $C_n$ using rays emanating from $z_0$. That is, for each $a\in \partial B(z_0,r_0)$ we let $\gamma(a)$ (resp.\ $\gamma_n(a)$) be the unique point of intersection of $C$ (resp.\ $C_n$) with the ray $z_0+\lambda(a-z_0)$, $\lambda>0$. By \eqref{hausdorff:inequality}, it follows that $\gamma_n$ converges to $\gamma$ uniformly. Let $A\subset C$ be an arc with endpoints $z=\gamma(a)$ and $w=\gamma(b)$. We consider the arcs $A_n= \gamma_n(\gamma^{-1}(A))$ with endpoints $z_n=\gamma_n(\gamma^{-1}(z))$ and $w_n=\gamma_n(\gamma^{-1}(w))$, $n\in \N$. By the uniform convergence, it follows that $A_n\to A$ in the Hausdorff metric.

Finally, we show that $\length(A_n)\to \length(A)$. We note that we have
\begin{align*}
\length(A)&\leq \liminf_{n\to\infty}\length(A_n) \quad \textrm{and}\\
\length(C\setminus A)&\leq \liminf_{n\to\infty}\length(C_n\setminus A_n).
\end{align*}
Since $\lim_{n\to\infty}\length(C_n)=\length(C)$, it follows that $\length(A_n)\to \length(A)$, as desired.
\end{proof}

Next, we start verifying the auxiliary results of Section \ref{section:preliminaries}.  We first prove the strict convexity of $C$. Suppose, for the sake of contradiction, that $C$ contains a line segment $A$ in its boundary. Then, by Lemma \ref{lemma:length_convergence}, there exists a sequence of arcs $A_n\subset C_n$ converging to $A$. The ``thickness'' of the arc $A_n$ can be measured by $$\inf_{w\in A_n} \sup_{z\in A_n}\dist(z,L(w)),$$ where $L(w)$ is a line tangent to $A_n$ at the point $w$. By Lemma \ref{lemma:arc_tangent} (i), the thickness of $A_n$ is at least $c\length(A_n)^2/\length(C_n)$, where $c$ is a constant depending only on $k$. Since $\length(A_n)\to \length(A)$ and $\length(C_n)\to \length(C)$, it follows that the thickness of $A_n$ is uniformly bounded below as $n\to\infty$. However, since $A_n$ converges to a straight line segment, the thickness converges to $0$, a contradiction. This establishes the strict convexity of $C$. Therefore, Lemma \ref{lemma:intersection_of_two} holds for curves $\partial D_1$ and $\partial D_2$ that can be approximated by such curves $C_n$.

Next, we discuss the validity of Lemma \ref{lemma:arc_tangent}. Note that part (ii) holds immediately from the  convergence in the Hausdorff metric and from Lemma \ref{lemma:length_convergence}. Part (i) holds if $L(z_2)$ is a {supporting line}, rather than a tangent line. Indeed, we let $z_1,z_2\in C$ and $L(z_2)$ be a line that intersects $C$ only at $z_2$. There exists another supporting line $L'(z_2)$ (we could have $L(z_2)=L'(z_2)$) with extremal slope such that $\dist(z_1,L(z_2))\geq \dist(z_1,L'(z_2))$; {see Figure \ref{figure:supporting}. It suffices to show that $$\dist(z_1,L'(z_2)) \geq \frac{k}{2\pi} \frac{\length(C|[z_1,z_2])^2}{\length(C)}.$$

\begin{figure}
	\begin{overpic}[width=.5\linewidth]{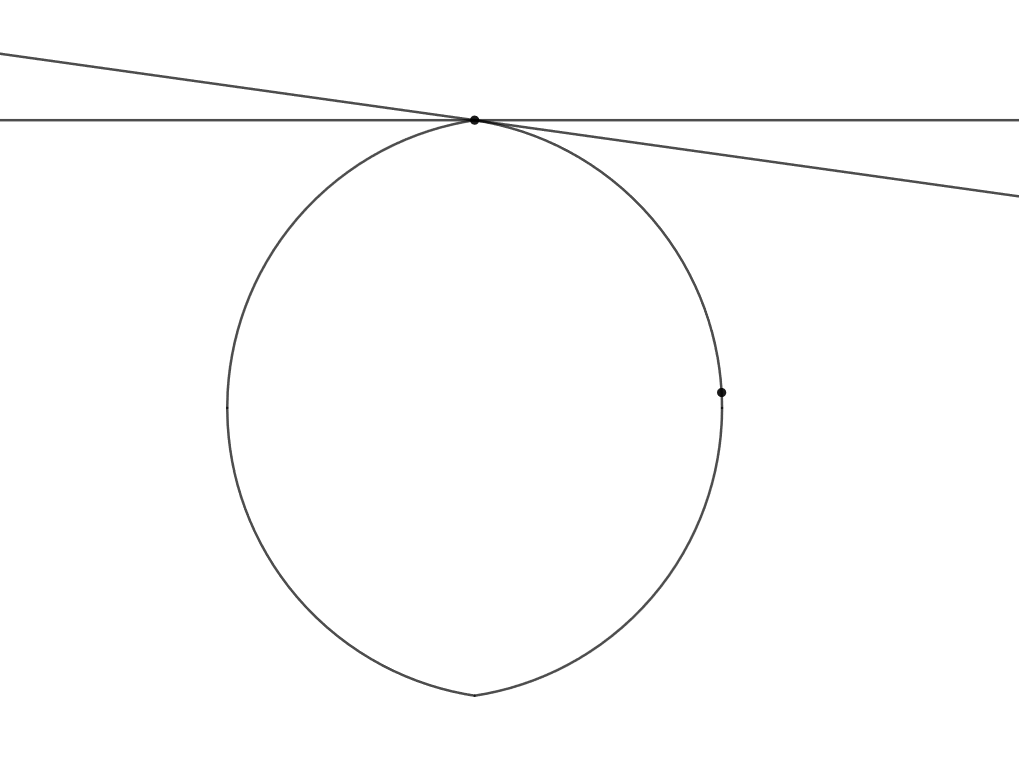}
		\put (73, 38) {$z_1$}
		\put (46, 66) {$z_2$}
		\put (80, 67) {$L(z_2)$}
		\put (90, 51) {$L^\prime(z_2)$}
	\end{overpic}
	\caption{The curve $C$ with two supporting lines $L(z_2),L^\prime(z_2)$ passing through $z_2$.}\label{figure:supporting}
\end{figure}

We can find points $z_{1,n}, z_{2,n} \in C_n$ converging to $z_1,z_2$, respectively, such that the tangent lines of $C_n$ at $z_{2,n}$ converge to the line $L'(z_2)$; this is not necessarily true for the original line $L(z_2)$. The convergence of tangents, by working locally (if one uses as local coordinates the projections from $C$ and $C_n$ to $L'(z_2)$), follows from the following lemma about convex functions, which we will prove later.

\begin{lemma}\label{lemma:convex}
Let $f_n\colon (a,b)\to \R$, $n\in \N$, be a sequence of convex, smooth functions and suppose that $f_n$ converges pointwise to a function $f\colon (a,b)\to \R$. Then for each $c\in (a,b)$ and for each $\varepsilon>0$ there exist $\delta>0$ and $N\in \N$ such that for $n\geq N$ we have 
\begin{align*}
|f_n'(c \pm \delta) -f'_{\pm}(c) | <\varepsilon,
\end{align*}
where $f'_{\pm}(c)$ denote the one-sided derivatives of $f$. 
\end{lemma}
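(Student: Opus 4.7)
The plan is to sandwich $f_n'(c+\delta)$ between two secant slopes of $f_n$ that arise from convexity, use pointwise convergence to replace the secant slopes of $f_n$ by those of $f$, and finally use standard properties of one-sided derivatives of convex functions to show that those secant slopes of $f$ are close to $f'_+(c)$ when $\delta$ is small.

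First I observe that the pointwise limit of convex functions is convex (straight from the defining inequality), so $f$ is convex on $(a,b)$ and its one-sided derivatives $f'_\pm$ exist at every interior point and are monotone nondecreasing. The convexity of $f_n$ and the mean value theorem yield, for any $\delta>0$ with $[c,c+2\delta]\subset (a,b)$,
$$\frac{f_n(c+\delta)-f_n(c)}{\delta}\leq f_n'(c+\delta)\leq \frac{f_n(c+2\delta)-f_n(c+\delta)}{\delta},$$
since $f_n'$ is nondecreasing and each secant slope equals $f_n'$ at some point of its interval.

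Given $\varepsilon>0$, I first choose $\delta>0$ small enough that both secant slopes $(f(c+\delta)-f(c))/\delta$ and $(f(c+2\delta)-f(c+\delta))/\delta$ lie within $\varepsilon/2$ of $f'_+(c)$. This is possible because each tends to $f'_+(c)$ as $\delta\to 0^+$: the first is the usual forward difference quotient, which decreases monotonically to the right derivative, while the second is sandwiched between $f'_+(c+\delta)$ and $f'_-(c+2\delta)$, both of which converge to $f'_+(c)$ by right-continuity of $f'_+$ at $c$. Once $\delta$ is fixed, pointwise convergence at the three points $c,c+\delta,c+2\delta$ produces $N$ such that for $n\geq N$ the secant slopes of $f_n$ at these points agree with those of $f$ to within $\varepsilon/2$. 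The sandwich then gives $|f_n'(c+\delta)-f'_+(c)|<\varepsilon$, and the argument for $f_n'(c-\delta)$ and $f'_-(c)$ is symmetric; taking the smaller $\delta$ and larger $N$ of the two handles both sides at once.

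The only subtle ingredient is recalling that the right derivative of a convex function is right-continuous (and symmetrically for the left derivative); this is standard and follows from the monotonicity of $f'_+$ combined with the convergence of the forward difference quotients to $f'_+(c)$. Beyond this bookkeeping I do not anticipate any real obstacle: the proof is essentially a clean application of the convexity sandwich and the continuity properties of one-sided derivatives of convex functions.
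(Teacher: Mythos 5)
Your proof is correct, but it takes a genuinely different route from the paper's. The paper proves the lemma by combining two cited results on convex functions: the one-sided continuity of $f'_{\pm}$ (used to pick $\delta$ so that $f'(c\pm\delta)$ exists and is within $\varepsilon$ of $f'_{\pm}(c)$) and the theorem that $f_n'(x)\to f'(x)$ at every point $x$ where the limit $f$ is differentiable. You instead give a self-contained argument: you trap $f_n'(c+\delta)$ between the secant slopes of $f_n$ over $[c,c+\delta]$ and $[c+\delta,c+2\delta]$ (valid since $f_n$ is smooth and convex, so $f_n'$ is nondecreasing and the mean value theorem applies), pass to the secant slopes of $f$ by pointwise convergence at three fixed points, and then let $\delta\to 0^+$ using the monotone convergence of difference quotients and the right-continuity of $f'_+$. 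In effect you reprove the relevant special case of the cited convergence theorem. Your approach buys self-containedness and avoids having to know that $f$ is differentiable at $c\pm\delta$ (the sandwich only uses values of $f$, not its derivative, at those points); the paper's approach is shorter on the page at the cost of two external references. Two small remarks: your accounting gives $|f_n'(c+\delta)-f'_+(c)|\leq \varepsilon$ rather than a strict inequality, which is trivially fixed by working with $\varepsilon/3$; and the convergence of the second secant slope can be obtained even more cheaply from the identity
\begin{align*}
\frac{f(c+2\delta)-f(c+\delta)}{\delta}=2\cdot\frac{f(c+2\delta)-f(c)}{2\delta}-\frac{f(c+\delta)-f(c)}{\delta},
\end{align*}
which avoids invoking right-continuity of $f'_+$ altogether.
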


Lemma \ref{lemma:arc_tangent} (i) applies now to the curve $C_n$  and to the tangent $L(z_{2,n})$ at $z_{2,n}$. Since the length of $C_n|[z_{1,n},z_{2,n}]$ converges to the length of $C|[z_1,z_2]$ and the length of $C_n$ converges to the length of $C$, the conclusion follows. 

Theorem \ref{theorem:chordarc} holds immediately for $C$ by Lemma \ref{lemma:length_convergence}. Therefore, $C$ is a chord-arc curve. Lemma \ref{lemma:chordarc_area} implies that the region bounded by $C$ is Ahlfors $2$-regular, so all of Section \ref{section:ahlfors} is valid. \qed

\begin{proof}[Proof of Lemma \ref{lemma:convex}]
Pointwise convergence implies that $f$ is convex on $(a,b)$. The convexity of $f$ implies that $f$ is continuous, and for all  $c\in (a,b)$ the one-sided derivatives $f'_{\pm}(c)$ exist and they are monotonically increasing. Hence, $f'(c)$ exists for all but countably many $c\in (a,b)$. Moreover, we have
$$\lim_{x\to c^{\pm}} f'_{\pm}(x) =f'_{\pm}(c)$$
for all $c\in (a,b)$; see \cite[Theorem 24.1, p.~227]{Rockafellar:convex}. It follows that that for each $c\in (a,b)$ and $\varepsilon>0$  there exists $\delta>0$ such that $f'(c\pm \delta)$ exists and $|f'(c\pm \delta)-f'_{\pm}(c)|<\varepsilon$. By \cite[Theorem 24.5, p.~233]{Rockafellar:convex}, if $f'(x)$ exists, then $\lim_{n\to\infty}f_n'(x)=f'(x)$. Therefore, for all sufficiently large $n\in \N$ we have $|f_n'(c\pm \delta)-f'_{\pm}(c)|<\varepsilon$, as desired.
\end{proof}
\bigskip

\section{Proof of Theorems \ref{theorem:counterexample} and \ref{theorem:dimension_one}}\label{section:example}
The proofs of parts (i) and (ii) of Theorem \ref{theorem:counterexample} are similar. We first provide the proof of part (i) and then we will explain the modifications needed in order to obtain (ii).

\begin{proof}[Proof of Theorem \ref{theorem:counterexample} (i)]
We first construct a family of convex, smooth Jordan curves with certain properties that will be used in the construction of the packing $\mathcal P_{\Omega}$.  For each square $Q=[a,b]\times [c,d]$ and for each $\alpha>0$ consider a  convex, smooth Jordan curve $C_{\alpha,Q}$ that agrees with $\partial Q$, except at four balls of radius $\alpha$, centered at the vertices of the square $Q$. We denote by $D_{\alpha,Q}$ the Jordan region bounded by $C_{\alpha,Q}$; see Figure \ref{figure:D_alpha_1}. By construction, $Q\setminus \br{D_{\alpha,Q}}$ is contained in four balls of radius $\alpha$. 

\begin{figure}
\centering
\begin{minipage}{0.49\textwidth}
\centering
	\captionsetup{width=.8\linewidth}
	\begin{overpic}[width=1\linewidth]{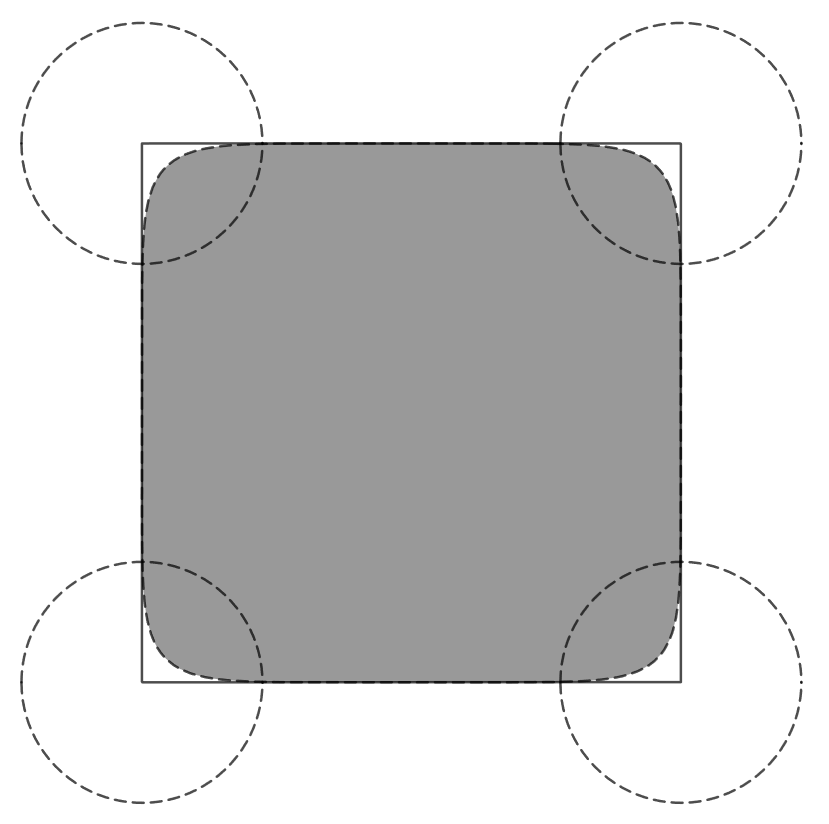}
		\put (85, 50) {$Q$}
		\put (50, 50) {$D_{\alpha,Q}$}
	\end{overpic}
	\caption{The convex region $D_{\alpha,Q}$ in the proof of Theorem \ref{theorem:counterexample} (i).}\label{figure:D_alpha_1}
\end{minipage}
\begin{minipage}{0.49\textwidth}
\centering
	\captionsetup{width=.8\linewidth}
	\begin{overpic}[width=1\linewidth]{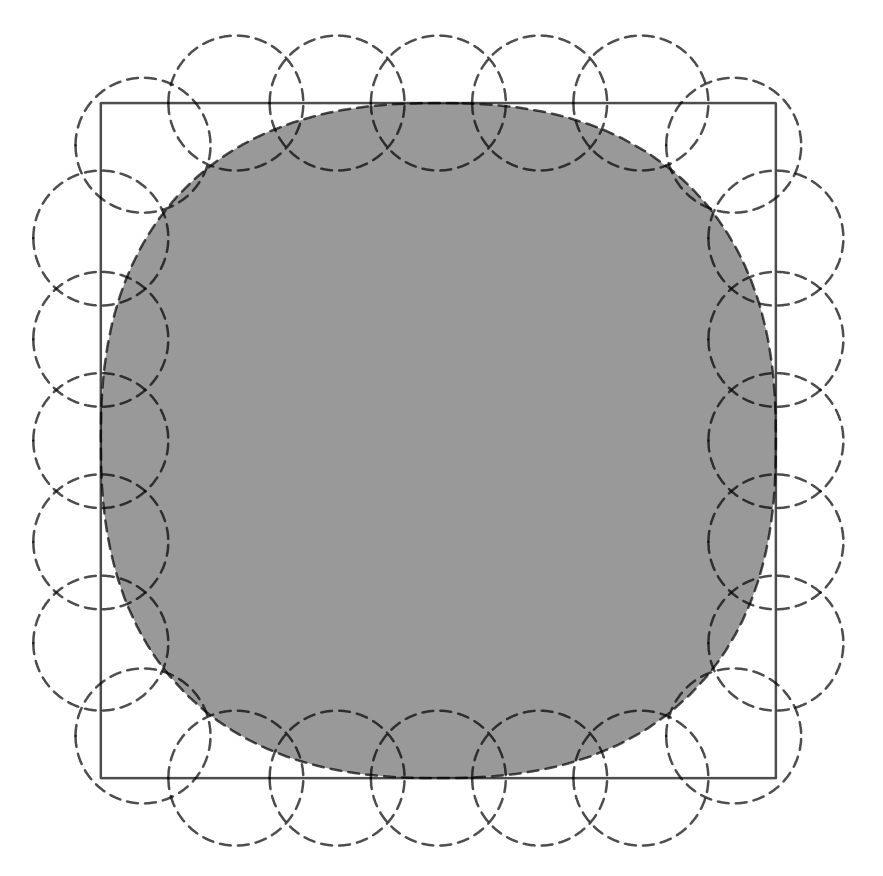}
		\put (96, 45) {$Q$}
		\put (50, 50) {$D_{\alpha,Q}$}
	\end{overpic}
	\caption{The strictly convex region $D_{\alpha,Q}$ in the proof of Theorem \ref{theorem:counterexample} (ii).}\label{figure:D_alpha_2}
\end{minipage}
\end{figure}

Consider a sequence $\varepsilon_n$, $n\in \N$, with $\varepsilon_n\to 0$ as $n\to\infty$. Let $\Omega\subset \R^2$ be an open set and consider a cover of $\Omega$ by closed squares $Q_i\subset \Omega$, $i\in \N$, with disjoint interiors such that each compact subset of $\Omega$ intersects only finitely many squares; see \cite[Theorem 1.4]{SteinShakarchi:RealAnalysis}. For each square $Q_i$ we consider a region $D_{1,i}=D_{\alpha_i,Q_i}$, for $\alpha_i=\varepsilon_1/2^i$, such that $Q_i\setminus \br{D_{1,i}}$ is contained in four balls of radius $\varepsilon_1/2^i$, centered at the four corners of $Q_i$.  Then $\Omega\setminus \bigcup_{i=1}^\infty \br {D_{1,i}}$ is covered by balls $B_{1,j}$ of radius $r_{1,j}<\varepsilon_1$, $j\in \N$, with 
\begin{align*}
\sum_{j=1}^\infty r_{1,j} = 4 \varepsilon_1.
\end{align*}
We note that $\bigcup_{i=1}^\infty \br {D_{1,i}}$ is a closed subset of $\Omega$. Indeed, if $z_i\in \br {D_{1,i}}$, $i\in \N$, then $z_i\in Q_i$. Since each compact subset of $\Omega$ intersects only finitely many squares $Q_i$, it follows that the sequence $\{z_i\}_{i\in \N}$  accumulates only at $\partial \Omega$. We define $\Omega_1=\Omega\setminus \bigcup_{i=1}^\infty \br {D_{1,i}}$ and it follows that this is an open set. Hence, we can repeat the above procedure with $\Omega_1$ in the place of $\Omega$.

Inductively, once $\Omega_n$ has been defined, we obtain regions $D_{n+1,i}\subset \Omega_n$, $i\in \N$, such that the open set $\Omega_{n+1}\coloneqq \Omega_n\setminus \bigcup_{i=1}^\infty \br{D_{n+1,i}}$ is covered by balls $B_{n+1,j}$ of radius $r_{n+1,j}<\varepsilon_{n+1}$, $j\in \N$, with
\begin{align*}
\sum_{j=1}^\infty r_{n+1,j}= 4 \varepsilon_{n+1}.
\end{align*} 

We consider the packing $\mathcal P= \{D_{n,i}\}_{i,n\in \N}$ in $\Omega$. Its residual set $\mathcal S$ can be written as
\begin{align*}
\mathcal S= \left(\bigcup_{n,i\in \N} \partial D_{n,i}  \right)\cup \left( \Omega \setminus \bigcup_{n,i\in \N}\br{D_{n,i}} \right).
\end{align*}
Note that each curve $\partial D_{n,i}$ has finite length and thus finite Hausdorff $1$-measure. Moreover, by construction, for each $\varepsilon>0$ there exists a cover of the set  $E=\Omega \setminus \bigcup_{n,i\in \N}\br{D_{n,i}}$ by balls $B_j$, $j\in \N$, of radius $r_j<\varepsilon$ so that 
\begin{align*}
\sum_{j=1}^\infty \diam(B_j)=\sum_{j=1}^\infty 2r_j \leq 8\varepsilon.
\end{align*}
This implies that $\mathscr{H}_\varepsilon^{1}(E)\leq 8\varepsilon$. Letting $\varepsilon\to 0$ gives $\mathscr{H}^1(E)=0$. Summarizing, $\mathcal S$ is the countable union of sets of finite Hausdorff $1$-measure. In particular $\dim_{\mathscr{H}}(S)=1$.
\end{proof}

\begin{proof}[Proof of Theorem \ref{theorem:counterexample} (ii)]
The main modification we need to make in the proof of part (i) is in the family of curves $C_{\alpha,Q}$.  For each square $Q=[a,b]\times [c,d]$ and for each $0<\alpha<\ell(Q)$ consider a  strictly convex, smooth Jordan curve $C_{\alpha,Q}$ that bounds a Jordan region $D_{\alpha,Q}\subset Q$ such that $Q\setminus \br{D_{\alpha,Q}}$ is covered by $N=4\ell(Q)/\alpha$ balls of radius $\alpha$; recall that $\ell(Q)$ denotes the side length of $Q$. See Figure \ref{figure:D_alpha_2} for an illustration. The curve $C_{\alpha,Q}$ can be taken, for example, to be the image of the circle $|z|=r$, where $r<1$ is close to $1$, under a conformal map from the unit disk onto the interior of $Q$; since $Q$ is convex, such curves are strictly convex by Study's Theorem \cite{Study:theorem}.

We fix a sequence $s_n>1$ with $s_n\to 1$ and a sequence $\varepsilon_n>0$ with $\varepsilon_n\to 0$ as $n\to\infty$. As in the proof of (i), we write $\Omega=\bigcup_{i\in \N}Q_i$, where $Q_i$ are closed squares with disjoint interiors that accumulate only at $\partial \Omega$. For each square $Q_i$ we consider a region $D_{1,i}=D_{\alpha_i,Q_i}$, where $\alpha_i$ is chosen so that $4\ell(Q)\alpha_i^{s_1-1}<\varepsilon_1/2^i$ and $\alpha_i<\varepsilon_1$. Note that $Q_i\setminus \br{D_{1,i}}$ is covered by $4\ell(Q_i)\alpha_i^{-1}$ balls of radius $\alpha_i$. The set $\Omega_1=\Omega\setminus \bigcup_{i=1}^\infty \br {D_{1,i}}$ is open and is covered by balls $B_{1,j}$ of radius $r_{1,j}$, $j\in \N$, with
\begin{align*}
\sum_{j=1}^\infty r_{1,j}^{s_1}= \sum_{i=1}^\infty 4\ell(Q_i)\alpha_i^{s_1-1} <\sum_{i=1}^\infty \varepsilon_1 2^{-i} =\varepsilon_1.
\end{align*}
Inductively, one can define the open set $\Omega_{n+1}=\Omega_n\setminus \bigcup_{i=1}^\infty \br{D_{n+1,i}}\subset \Omega_n$ that is covered by balls $B_{n+1,j}$ of radius $r_{n+1,j}<\varepsilon_{n+1}$, $j\in \N$, with
\begin{align*}
\sum_{j=1}^\infty r_{n+1,j}^{s_{n+1}}<\varepsilon_{n+1}.
\end{align*}

We consider the packing $\mathcal P=\{D_{n,i}\}_{n,i\in \N}$. Its residual set can be decomposed into the union of the curves $\partial D_{n,i}$, $n,i\in \N$, with the set $E=\Omega \setminus \bigcup_{n,i\in \N}\br{D_{n,i}}$. It suffices to show that the set $E$ has Hausdorff dimension equal to $1$. Let $s>1$. By construction, for each $n\in \N$ there exists a cover of $E$ by balls $B_{n,j}$, $j\in \N$, of radius $r_{n,j}<\varepsilon_n$ so that 
\begin{align*}
\sum_{j=1}^\infty r_{n,j}^{s_n} <\varepsilon_n.
\end{align*}
If $n$ is sufficiently large, then $s_n<s$ and $r_{n,j}<\varepsilon_n<1$, so $r_{n,j}^s \leq r_{n,j}^{s_n}$. Therefore,
\begin{align*}
\sum_{j=1}^\infty \diam(B_{n,j})^s =
\sum_{j=1}^\infty  2^s r_{n,j}^{s} <2^s\varepsilon_n
\end{align*}
for sufficiently large $n$. This implies that $\mathscr{H}_{\varepsilon_n}^{s}(E)<2^s\varepsilon_n$. As $\varepsilon_n\to 0$, we obtain $\mathscr{H}^{s}(E)=0$. This holds for all $s>1$, hence, $\dim_{\mathscr{H}}(E)=1$.
\end{proof}

\begin{proof}[Proof of Theorem \ref{theorem:dimension_one}]
The proof is based on the fact that if a set $E$ has $\sigma$-finite Hausdorff $1$-measure, then for almost every horizontal line $L$ (with respect to Lebesgue measure) the intersection $L\cap E$ is at most countable; see \cite[Theorem 30.16, p.~104]{Vaisala:quasiconformal}. 

Consider a packing $\mathcal P_{\Omega}=\{D_i\}_{i\in \N}$ such that $\partial D_i\cap \partial D_j$ is at most countable for all $i\neq j$. Hence, the set
$$F=\bigcup_{\substack{i,j\in \N\\i\neq j}}(\partial D_i\cap \partial D_j)$$
is at most countable. 

For each $i\in \N$ we consider a parametrization $\gamma_i \colon \T\to \partial D_i$ and we let $G_i$ be the set of local maximum and local minimum values of the function $\im(\gamma_i) \colon \T\to \R$. Note that $y\in G_i$, if and only if there exists an open segment $I_y$ of the horizontal line $L_y=\{(x,y):x\in \R\}$ such that $I_y\cap \partial D_i\neq \emptyset$ and either ${I_y} \subset \br{D_i}$  or $I_y\subset \R^2 \setminus {D_i}$; see Figure \ref{figure:extrema}. The set of local extremal values of a real-valued function on a separable metric space is always at most countable; see \cite[Lemma 2.10]{Ntalampekos:monotone} for an argument. Therefore, the set $G_i$ is at most countable.

\begin{figure}
\centering
\begin{minipage}{0.49\textwidth}
\centering
\captionsetup{width=.8\linewidth}
\begin{overpic}[width=1\linewidth]{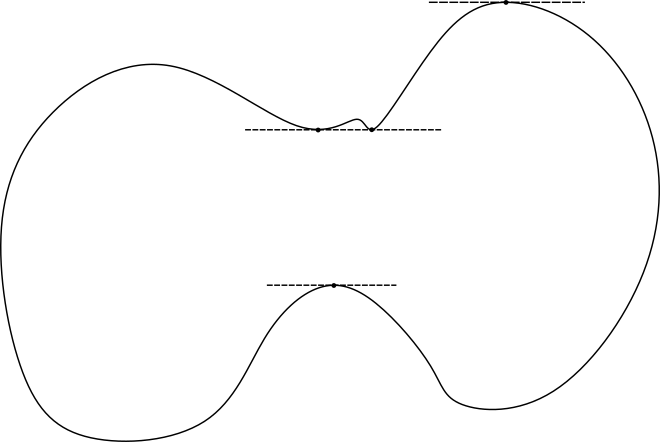}
		\put (90,10) {$D_i$}
		\put (61, 23) {$I_{y_1}$}
		\put (68, 47) {$I_{y_2}$}
		\put (90, 66) {$I_{y_3}$}
	\end{overpic}
	\caption{The values $y_1$ and $y_3$ are local maximum values of $\im(\gamma_i)$ and $y_2$ is a local minimum of $\im(\gamma_i)$. Each of the segments $I_{y_1},I_{y_2},I_{y_3}$ is contained either in $\br{D_i}$ or in $\R^2\setminus D_i$.}\label{figure:extrema}
\end{minipage}
\begin{minipage}{0.49\textwidth}
		\captionsetup{width=.8\linewidth}
		\begin{overpic}[width=1.\linewidth]{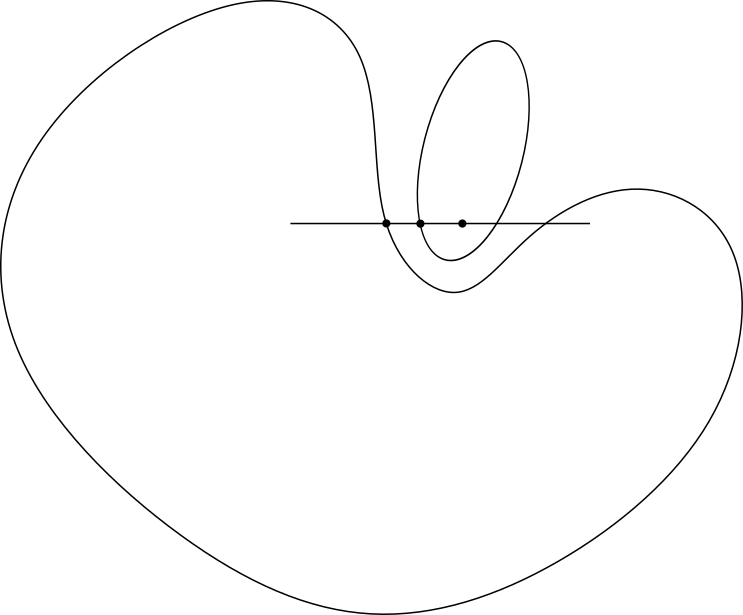}
		\put (30, 75) {$D_{i_0}$}
		\put (71, 77) {$D_j$}
		\put (35, 53) {$I$}
		\put (46, 54) {$z$}
		\put (57, 54) {$\tilde{w}$}
		\put (63, 54) {$w$}
		\end{overpic}
		\caption{The function $\im(\gamma_{i_0})$ does not attain a local extremal value at $z$, so the horizontal segment $I$ intersects both the interior and exterior of $D_{i_0}$. Moreover $z\notin \partial D_j$ for any $j\neq i_0$. }\label{figure:nonconvex}
\end{minipage}
\end{figure}

We will show that if a horizontal line $L$ does not intersect the countable set $F\cup \bigcup_{i\in \N}G_i$, then  the intersection of $L$ with the residual set $\mathcal S$ is uncountable. This will complete the proof.  In fact, we will show that the locally compact set $L\cap \mathcal S$ is perfect (in its relative topology). This will imply that $L\cap \mathcal S$ is uncountable.

Let $z\in L\cap \mathcal S = L\cap (\Omega \setminus \bigcup_{i=1}^\infty {D_i})$. Our goal is to show that $z$ is not isolated in $L\cap \mathcal S$, i.e., any open segment $I\subset L$ containing $z$ intersects $\mathcal S\setminus \{z\}$. Suppose first that $z\notin \partial D_i$ for any $i\in \N$. Then any open segment $I\subset L$ that contains $z$ is either contained in $\mathcal S$ or it intersects $ D_i$ for some $i\in \N$. In the second case, since $z\notin \br {D_i}$, there exists a point $w\in I\cap \partial D_i\subset L\cap \mathcal S$. In any case $(I\setminus \{z\})\cap \mathcal S\neq \emptyset$. 

Now, suppose that $z\in \partial D_{i_0}$ for some $i_0\in \N$ and let $I\subset L$ be an open segment that contains $z$. Since $I$ does not intersect the set $G_{i_0}$, it follows that $I$ intersects both $D_{i_0}$ and $\R^2\setminus \br{D_{i_0}}$. We let $w \in I\cap (\R^2\setminus \br{D_{i_0}})$. If $w\in \mathcal S$, then $(I\setminus \{z\})\cap \mathcal S\neq \emptyset$ and there is nothing to prove. If $w\in D_j$ for some $j\in \N$, $j\neq i_0$, we consider the segment $[z,w]\subset I$. Since $z\in \partial D_{i_0}$ and $D_j\cap \partial D_{i_0}=\emptyset$, it follows that $[z,w]$ is not contained in $D_{j}$. Therefore there exists a point $\widetilde w\in [z,w]\cap \partial D_{j}$; see Figure \ref{figure:nonconvex}. The line $L$ does not intersect the set $F$, so $\widetilde w \neq z$. It follows that $(I\setminus \{z\})\cap \mathcal S\neq \emptyset$ in this case too. The proof is complete.
\end{proof}

\bibliography{packings_dimension}
\end{document}